\RequirePackage{fix-cm}

\documentclass{article}
\usepackage[left=3cm, right=3cm, top=2cm,bottom=3cm]{geometry}

\usepackage{graphicx}

\usepackage[utf8]{inputenc}
\usepackage[T1]{fontenc}

\usepackage{amsthm}
\usepackage{amssymb}
\usepackage{amsmath}
\usepackage{verbatim}
\usepackage{mathrsfs}
\usepackage[linesnumbered, ruled]{algorithm2e}
\usepackage{hyperref}
\usepackage[misc]{ifsym}
\usepackage{placeins}
\DeclareMathOperator*{\argmax}{arg\,max}

\usepackage{hyperref}

\usepackage{array}
\newcolumntype{M}[1]{>{\centering\arraybackslash}m{#1}}

\newtheorem{proposition}{Proposition}
\numberwithin{proposition}{section}
\newtheorem{theorem}[proposition]{Theorem}
\newtheorem{lemma}[proposition]{Lemma}%
\newtheorem{corollary}[proposition]{Corollary}
\theoremstyle{remark}
\newtheorem{remark}[proposition]{Remark}
\theoremstyle{definition}
\newtheorem{definition}[proposition]{Definition}
\newtheorem{example}[proposition]{Example}

\usepackage{todonotes}

\newcommand{\Dominik}[1]{{\todo[inline, color=red!40]{DH: #1}}}

\pdfminorversion=4

\begin{document}

\title{An application of sparse measure valued Bayesian inversion to acoustic
	sound source identification}

\author{
Sebastian Engel \footnote{University of Graz, Institute of Mathematics and Scientific Computing, Heinrichstraße 36, 8010 Graz, Austria, sebastian.engel@uni-graz.at, ORCID: 0000-0003-3968-8108},
~
Dominik Hafemeyer \footnote{Technical University of Munich, Chair for Optimal Control, Department of Mathematics, Boltzmannstr.~3, 85748 Garching b. M\"unchen, Germany, dominik.hafemeyer@tum.de, ORCID: 0000-0002-9161-4811},
~
Christian Münch \footnote{Technical University of Munich, Chair for Numerical Mathematics / Control Theory, Department of Mathematics, Boltzmannstr.~3, 85748 Garching b. M\"unchen, Germany, chris\_muench@mytum.de, ORCID: 0000-0002-9132-3897},
~
Daniel Schaden \footnote{Technical University of Munich, Chair for Numerical Mathematics, Department of Mathematics, Boltzmannstr.~3, 85748 Garching b. M\"unchen, Germany,schaden@ma.tum.de, ORCID: 0000-0003-2419-2839}
}



\maketitle

\begin{abstract}
In this work we discuss the problem of identifying sound sources from pressure measurements with a Bayesian approach. The acoustics are modelled by the Helmholtz equation and the goal is to get information about the number, strength and position of the sound sources, under the assumption that measurements of the acoustic pressure are noisy. We propose a problem specific prior distribution of the number, the amplitudes and positions of the sound sources and algorithms to compute an approximation of the associated posterior. We also discuss a finite element discretization of the Helmholtz equation for the practical computation and prove convergence rates of the resulting discretized posterior to the true posterior. The theoretical results are illustrated by numerical experiments, which indicate that the proven rates are sharp.
\\ \\
\textbf{Keywords} Uncertainty Quantification \and Bayesian Inverse Problem \and Sound Source Identification \and Finite Elements \and Non-Gaussian \and Sequential Monte Carlo
\end{abstract}

\section{Introduction}
\label{intro}
Throughout the paper, unless noted otherwise, all functions and measures are complex valued. Let $D \subseteq \mathbb{R}^d$, $d = 2,3$, be an bounded convex polygonal/polyhedral domain. Furthermore let the boundary be in the form $\partial D =: \Gamma =\Gamma_N \cup \Gamma_Z$ where $\Gamma_N := \bigcup_{j=1}^J \overline{\Gamma_j}$ can be written as the union of some faces of $\Gamma$ and $\Gamma_Z := \Gamma \backslash \Gamma_N$. We assume that $\Gamma_Z$ has positive $d-1$-dimensional Hausdorff measure. We model the acoustic pressure $y_u$ as the solution of
\begin{align} \label{eq:helmholtzeq} 
\begin{matrix}\left\{
\begin{matrix}
-\Delta y_u-\left( \frac{\zeta}{c} \right)^2 y_u & = \tau(u) & \text{ in }D,  \\
\partial_\nu y_u -i\frac{ \zeta \rho}{\gamma_{\zeta}} y_u & =0 & \text{ on }\Gamma_Z, \\
\partial_\nu y_u & =g & \text{ on }\Gamma_N.
\end{matrix}
\right.
\end{matrix}
\end{align}
The Helmholtz equation describes a stationary wave and is not elliptic, making some of the analysis atypical. It appears in the modelling of acoustics or electromagnetism (see for example \cite[Chapter 2, §8, Section 7]{dautray2012mathematical} or \cite[Chapters VI, XXV]{Kosljakov1964} for a more physics based approach). To be more specific, $\zeta$ denotes the angular frequency and $c$ the speed of sound. $g\in H^\frac{1}{2}(\Gamma_N)$ is a complex function which models the amplitude of a outside sound source on a part of the boundary and $\tau(u)$ is a complex linear combination, possibly even a series, of Dirac measures with support in $D$. The Dirac measures model sound sources, which we want to identify. $\rho$ describes the density of the fluid and $\gamma_{\zeta}\in \mathbb{C}$ is the wall impedance given by 
\begin{align}
\begin{matrix}
\gamma_{\zeta}:=\beta_{\zeta}+\frac{\alpha_{\zeta}}{\zeta}i.
\end{matrix}
\end{align}
The frequency-dependent material constants $\alpha_{\zeta}>0$ and $\beta_{\zeta}>0$ are related to the viscous and elastic response of the isolating material. However, in what follows, we will assume only that $\beta_{\zeta}\neq 0$. The boundary condition on $\Gamma_Z$ allows the modelling of an absorbing viscoelastic material covering the boundary walls. The boundary conditions on $\Gamma_N$ models external influence on the acoustic pressure. \\

Our aim is to deduce the distribution of the number, locations and amplitudes of the sound sources $\tau(u)$ from pressure measurements $y := (y_u(z_j))_{j=1}^m \in \mathbb{C}^m$ at finitely many distinct points $(z_j)_{j=1}^m$ in $D$. We define the observation operator
\begin{align} \label{eq:observation_op}
G: \, & \ell^1_\kappa \rightarrow \mathbb{C}^m, \quad u  \mapsto  \left( y_u(z_j) \right)_{j=1}^m
\end{align}
where $\ell^1_\kappa$ is a suitable subset chosen from a sequence space of amplitudes $\alpha$ and positions $x$. Later in  Section \ref{sec:observation_potential}, we analyse under which assumptions and restrictions $G$ is actually well-defined.

We analyse the following inverse problem
\begin{align}
\begin{matrix}\label{eq:problem} 
y = G(u)+ \eta \in\mathbb{C}^m,
\end{matrix}
\end{align}
where the measurement noise is denoted by $\eta \in\mathbb{C}^m$. In particular, we give precise meaning to $\eta$, $y$ and $u$ as random variables.

We study \eqref{eq:problem} with a Bayesian approach which means that we are going to propose a problem specific prior, i.e. a distribution, which models prior knowledge of the distribution of the number, amplitudes and positions of the sound sources. We are then going to deduce the posterior distribution of those quantities, which incorporates our prior knowledge and observations from measurements. For the general principle we refer to \cite{[Stu]} or \cite{sullivan2015introduction} for example.

The inverse problem is challenging from various aspects. In particular we would like to note that $G$ will be non-linear thus making this problem delicate. Apart from the non-linearity of $G$ an additional problem will be that (ignoring boundary conditions) solutions to the Helmholtz equation only depend on the distance to the source. (See the proof of Proposition \ref{prop:Greensfct_exist_regul}, (\ref{eq:proof:besselphi}).) Meaning: taking a single measurement one can infer how far the sound source is away from the measurement point, not necessarily where it is located.\\
This work is structured as follows. In Section \ref{sec:HH_equation} we give a precise meaning to \eqref{eq:helmholtzeq} and prove existence and regularity of solutions to \eqref{eq:helmholtzeq}. We proceed to approximate \eqref{eq:helmholtzeq} by finite elements and prove a priori estimates for the error between the exact acoustic pressure and its finite element approximation. 

In Section \ref{sec:bayesian_approach}, we specify the prior and posterior distribution of the number, amplitudes and positions of the unknown sound sources and show that the posterior distribution is well defined. Based on the theoretical stochastic framework we propose a Sequential Monte Carlo method (SMC) in Section \ref{sec:sampling} to solve \eqref{eq:problem}.  In particular, we propose an algorithm to approximate the posterior distribution of the number, the amplitudes and the positions of the sources. We also present that the discretized observation operator $G_h$ is well-defined and meaningful along with a priori error estimates introduced by the finite element discretization.

Section \ref{sec:experiments} is reserved for numerical results which illustrate the theoretical framework. \\

Before we continue we would like to put this work into perspective. In \cite{[BeGaRo],[PiTaTrW],Hongpeng2018}, deterministic models are considered to recover number, amplitude and positions of the sound sources from measurements of the acoustic pressure. The papers rely on techniques of optimal control theory. While the cost functional in \cite{[BeGaRo]} is smooth, non-smooth and sparsity promoting cost functionals are considered in \cite{[PiTaTrW],Hongpeng2018}. Morover, \cite{[BeGaRo]} includes a discussion of finite element approximations.

Various other discretization techniques were also used in situations similar to ours. In the PDE related work of \cite{Kantas2014}, the authors considered an inverse problem, which is governed by the Navier-Stokes equation. The conditions on the Navier-Stokes equation are considered in such a way, that the solution of the PDE is representable by a Fourier series. Such a representation allows the authors to consider a spectral Galerkin method for which exponential rates in the discretization error are obtained. In case of the Helmholtz equation there exists a similar ansatz, which allows to use a spectral method as well, i.e. the Sinc-Galerkin method. The authors in \cite{Hardwood2014} discuss such a Sinc-Galerkin method for the Helmholtz Equation, where exponential error rates can be shown in case of a smooth solution to the Helmholtz equation. In case of point sound sources, i.e. the forcing function is a linear combination of Dirac measures, the authors \cite{Hardwood2014} can not show that an exponential error rate is obtainable. This is still an open question. Furthermore, due to the structure of the Sinc-functions, a more complex geometry, than a square or cube of the domain $D$, requires an overlapping-technique, which is complicated to handle, see \cite{Nancy1995, Nancy1996A}.

Concerning the Bayesian inverse problem, prior distributions with similar structures as in this work can be found in computational statistics: The focus in \cite{Green19955} is to simulate distributions with state spaces of differing dimensions. In particular, \cite{Green19955} considers a measure $\pi$ on the measurable space $(Q,\mathfrak{K})$ where $Q=\bigcup_{k\in \mathfrak{K} } \lbrace k \rbrace\times Q_k$, $Q_k \subseteq \mathbb{R}^k$, $\mathfrak{K}\subseteq\mathbb{N}$. Examples where such distributions can be observed are Bayesian mixture modelling \cite{RISy1997} or non-linear classification and regression \cite{DHMS2002}. We will work with a prior distribution which has its support in an appropriate sequence space $\ell^1_\kappa$. This has the benefit of giving us a function space structure to work with.

In \cite{Knuth1} and \cite{Knuth2} the authors consider the finite dimensional problem
\begin{align}\label{ffs}
	\begin{matrix}
	x(t) = As(t) \in \mathbb{R}^n
	\end{matrix}
\end{align}
for $n\in \mathbb{N}$, where $x_i$ represents the time dependent amplitude recorded by the microphone $i$. The function $s_i$ models the time dependent amplitude of the source and the unknown square matrix $A \in \mathbb{R}^{n\times n}$ is called the mixing matrix. The authors assume that the emitted sound is directly detected by the microphones without any delay and that the mixing is linear. The aim is to obtain an explicit representation of the posterior distribution of the mixing matrix $A$.
Using Bayes' Theorem and several assumptions, the authors are able to show that this problem is well posed and are able to approximate the posterior distribution of the matrix $A$ and also the distance of the sources from the microphones. Comparing the our work and \cite{Knuth1,Knuth2} we allow that the number of detectors is arbitrary, the sound source number is unknown and the prior w.r.t. the positions is more general. The authors in \cite{Knuth1,Knuth2} use the whole $\mathbb{R}^d$ as domain for the position of the sound sources and do not take reflections or exterior influences of the pressure into account, which is possible using a PDE based approach. The focus of \cite{Knuth1,Knuth2} is different and is concentrating on finding the mixing matrix, which does not need the introduction of a PDE depending model.

Regarding the use of the SMC see the works of \cite{Kantas2014, Beskos2015, Chopper2002}. The observation operator in \cite{Kantas2014} is linear and the prior is a Gaussian measure on an infinite dimensional space. In this setting the authors present a standard SMC algorithm, which is based on a previous work of \cite{Chopper2002} and obtains a convergence rate of $\mathcal{O}(1/N)$ with $N$ as the number of particles that approximate the posterior measure \cite{Beskos2014Stable}. This SMC method is also used in \cite{Beskos2015} for an elliptic inverse problem, with a non-linear observation operator and a non-Gaussian prior, which is based on a Fourier series formulation with deterministic basis multiplied by randomized scalar valued random variables of certain kind. The authors want to identify the diffusion coefficent in the elliptic partial differential equation, where fixed Dirac measures act as forcing functions in the differential equation. These Dirac measures are approximated by smoother functions.

Due to the constructed prior, the authors of \cite{Beskos2015}, obtain a bounded likelihood function, which is bounded from below by a $C>0$. This allows the authors to show that the SMC method converges in a specific distance function, see \cite[Theorem 3.1]{Beskos2015}. In our work, we considered a non-linear observation operator as well as a prior which is not Gaussian. Furthermore, we can not directly use the proof of \cite[Theorem 3.1]{Beskos2015} to show convergence in our setting. This is due to the likelihood function, which can obtain values that are arbitrarily near to $0$ in our situation. However, we are still able to show that the SMC method converges in the mean square error with the rate $\mathcal{O}(1/N)$. 

More application-oriented research on the location of sound sources with a Bayesian approach can be found in \cite{Asano2013}. Here, the Gaussian prior and the finite dimensional observation operator are based on a frequency decomposition of the signals. In particular, the authors use a Markov chain Monte Carlo method and compare the simulated results to real experiments. Due to the different structure of the observation operator in \cite{Asano2013}, the model does not consider the Helmholtz equation or its discretization. 
Further practical results are the subject of \cite{Nakamura2009}, where a similar problem with a dynamically moving robot is studied.

\section{The Helmholtz Equation} \label{sec:HH_equation}

\subsection{Preliminaries} \label{sec:prelimHH}
For convenience of the reader we recall some basic results on Sobolev spaces and auxiliary results. We also introduce the notation for the rest of the work. Most definitions are not given in full generality, but can be easily generalized. Moreover, we define several solution concepts to the Helmholtz equation in this section. 

The absolute value of a complex number is denoted by $|\cdot|$. We write $\| \cdot \|$ for the Euclidean norm in $\mathbb{R}^d$. We define the distances for $x\in \mathbb{R}^d$ and $D_0,D_1\subseteq \mathbb{R}^d$
\begin{align*}
\operatorname{dist}(x, D_1) := \inf_{y \in D_1} \|x-y\| \qquad \text{ and } \qquad \operatorname{dist}(D_0, D_1) := \inf_{x \in D_0, y \in D_1} \|x-y\|.
\end{align*}

For $p\in [1,\infty]$ and $k\in\mathbb{N}$ we denote by $W^{k,p}(D)$ complex valued Sobolev spaces, i.e. $L^p(D)$ function whose derivatives of up to order $k$ also lie in $L^p(D)$. They are equipped with the usual norms and thus Banach spaces. Also note that $H^k(D) := W^{k,2}(D)$ is a Hilbert space. These statements and all the following statements of Section \ref{sec:prelimHH} can be found for the real valued case in \cite{Adams1975}. The statements for complex valued functions follow by the decomposition into real and imaginary part.

For the next theorem we need the notion of a Lipschitz domain. A Lipschitz domain is a domain $D$ in $\mathbb{R}^d$ such that locally the boundary of $D$ can be written as the graph of a Lipschitz map. A precise definition is given in \cite[Definition 2.4]{wloka}. We emphasize that any domain appearing in this paper, for example $D$ or $D\setminus B_\kappa(x)$ for $x\in D, \kappa \geq 0$, are Lipschitz domains.

\begin{theorem}[Sobolev Embedding Theorem] \label{thm:sobolevembedding}
For each Lipschitz domain $E\subseteq \mathbb{R}^d$ we have the continuous embedding
\begin{align*}
H^2(E) \hookrightarrow C(\bar E).
\end{align*}
More precisely, there exists a $C>0$ such that
\begin{align*}
\lVert f \rVert_{C(\bar E)} \leq C \lVert f \rVert_{H^2(E)}, \quad \text{for all } f\in H^2(E).
\end{align*}
\end{theorem}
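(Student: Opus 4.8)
The plan is to reduce the statement on the bounded Lipschitz domain $E$ to the analogous embedding on all of $\mathbb{R}^d$ by means of a Sobolev extension operator, and then to prove the whole-space embedding by a short Fourier-analytic argument that uses $d\in\{2,3\}$.

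First I would invoke the existence of a bounded linear extension operator $\mathcal{E}\colon H^2(E)\to H^2(\mathbb{R}^d)$ with $(\mathcal{E}f)|_E=f$ and $\lVert \mathcal{E}f\rVert_{H^2(\mathbb{R}^d)}\le C_E\lVert f\rVert_{H^2(E)}$; such an operator exists for every bounded Lipschitz domain (Stein's extension theorem, see \cite{Adams1975}). This is the only genuinely non-elementary ingredient, and it is exactly here that the Lipschitz hypothesis on $E$ enters; all domains appearing in the paper have already been observed to be Lipschitz. An alternative is a localisation argument — cover $\bar E$ by finitely many charts in which $\partial E$ is a Lipschitz graph, flatten the boundary by bi-Lipschitz changes of coordinates and reflect — but this merely reproduces the content of the extension theorem, so citing the latter is cleaner.

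Second, on $\mathbb{R}^d$ I would establish $H^2(\mathbb{R}^d)\hookrightarrow C_b(\mathbb{R}^d)$, the space of bounded continuous functions with the supremum norm. For $g\in H^2(\mathbb{R}^d)$, Plancherel's theorem gives $(1+\lvert\xi\rvert^2)\widehat g\in L^2(\mathbb{R}^d)$ with $\lVert (1+\lvert\xi\rvert^2)\widehat g\rVert_{L^2}\le C_1\lVert g\rVert_{H^2(\mathbb{R}^d)}$. Since $d\le 3<4=2\cdot 2$, the weight $\xi\mapsto(1+\lvert\xi\rvert^2)^{-1}$ lies in $L^2(\mathbb{R}^d)$, and Cauchy–Schwarz yields $\widehat g\in L^1(\mathbb{R}^d)$ with $\lVert\widehat g\rVert_{L^1}\le \lVert(1+\lvert\xi\rvert^2)^{-1}\rVert_{L^2}\,\lVert(1+\lvert\xi\rvert^2)\widehat g\rVert_{L^2}\le C_2\lVert g\rVert_{H^2(\mathbb{R}^d)}$. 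By Fourier inversion $g$ coincides almost everywhere with a bounded continuous function whose supremum norm is at most $\lVert\widehat g\rVert_{L^1}$, hence $\lVert g\rVert_{C_b(\mathbb{R}^d)}\le C_2\lVert g\rVert_{H^2(\mathbb{R}^d)}$. (One actually obtains Hölder continuity, but this is not needed here.)

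Finally I would combine the two steps. Given $f\in H^2(E)$, set $g=\mathcal{E}f$, pass to its continuous representative on $\mathbb{R}^d$ and restrict it to $\bar E$. Since $g|_E=f$ almost everywhere, this restriction is a continuous function on $\bar E$ representing $f$, and $\lVert f\rVert_{C(\bar E)}\le \lVert g\rVert_{C_b(\mathbb{R}^d)}\le C_2\lVert g\rVert_{H^2(\mathbb{R}^d)}\le C_2C_E\lVert f\rVert_{H^2(E)}$, which is the asserted estimate with $C=C_2C_E$. The main obstacle is entirely contained in the extension step; everything else is routine.
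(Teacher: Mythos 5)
Your proposal is correct. Note, however, that the paper does not actually prove this theorem: it is a standard result, and the text simply refers to \cite{Adams1975} (for the real-valued case, with the complex case obtained by splitting into real and imaginary parts). Your argument --- extend by a bounded linear operator $\mathcal{E}\colon H^2(E)\to H^2(\mathbb{R}^d)$, then prove $H^2(\mathbb{R}^d)\hookrightarrow C_b(\mathbb{R}^d)$ via $\widehat g\in L^1$ using Cauchy--Schwarz against the weight $(1+|\xi|^2)^{-1}$, which is square-integrable precisely because $d\le 3<4$ --- is the standard textbook proof and is complete; the Fourier step works verbatim for complex-valued $g$, so no real/imaginary decomposition is even needed. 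The only point worth making explicit is the one you already flag implicitly: the extension theorem is stated for bounded Lipschitz domains, which covers every domain the paper uses ($D$, $D\setminus B_\kappa(x)$, etc.), and it is the only place where the Lipschitz hypothesis is genuinely consumed. What your route buys over the paper's bare citation is a self-contained argument exposing exactly where the dimension restriction $d\in\{2,3\}$ enters; what the citation buys is brevity for a result that is not specific to this paper.
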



In general, functions in $H^k(D)$ are not well-defined on the boundary, but their boundary values can still be defined in the sense of traces (see \cite[Theorem 5.22]{Adams1975}) for which we write $\cdot|_{\Gamma}$.
%

\begin{definition}
We define
\begin{align*}
H^\frac{1}{2}(\Gamma_N) := \left\lbrace f|_{\Gamma_N}: f\in H^1(D) \right\rbrace
\end{align*}
and recall that for each $f\in H^1(D)$ one has
\begin{align} \label{eq:traceest}
\lVert f|_{\Gamma} \rVert_{H^\frac{1}{2}(\Gamma)} \leq C \lVert f \rVert_{H^1(D)}
\end{align}
with a $C$ independent of $f$.
\end{definition}

In the following, we introduce different solution concepts for the Helmholtz equation.
%
%
\begin{definition}[Weak solution] \label{Def:Weaksolution}
The function $y \in H^1(D)$ is called weak solution to the Helmholtz equation if it satisfies
\begin{align}\label{eq:weakSol}
\int\limits_D \nabla y \cdot \nabla \overline{v} - \left(\frac{\zeta}{c}\right)^2 y \overline{v} ~ dx - \frac{i \zeta \rho}{\gamma_{\zeta}}\int\limits_{\Gamma_Z} y\overline{v}dS(x) 
= \int\limits_D \tau(u) \overline{v} dx + \int\limits_{\Gamma_N} g \overline{v}dS(x) 
\end{align}
for all $v \in H^1(D)$. 
\end{definition}
\begin{definition}[Very weak solution] \label{def:veryweaksol}
The function $y \in L^2(D)$ is called very weak solution to the Helmholtz equation if it satisfies
\begin{align*}
\int\limits_D y (- \Delta \overline{v}) - \left(\frac{\zeta}{c}\right)^2 y \overline{v} ~ dx 
= \int\limits_D \tau(u) \overline{v} dx + \int\limits_{\Gamma_N} g \overline{v}dS(x) 
\end{align*}
for all $v \in H^2(D)$ with $\partial_{\nu} v + i\frac{\zeta \rho}{\bar \gamma_\zeta}v = 0$ on $\Gamma_Z$ and $\partial_{\nu} v = 0$ on $\Gamma_N$.
%
\end{definition}
Both notions of weak and very weak solutions are derived from testing \eqref{eq:helmholtzeq} with a test function and using integration by parts. In case of a very weak solution of the Helmholtz equation with $\tau(u) = \sum_{\ell = 1}^\infty \alpha_\ell \delta_{x_\ell}$ the first integral on the right has to be understood as
\begin{equation*}
\int\limits_D \tau(u) \overline{v} dx = \sum_{\ell = 1}^\infty \alpha_\ell \overline{v(x_\ell)}.
\end{equation*}
This expression is well-defined for $v \in H^2(D)$ since $H^2(D)$ embeddeds into $C(\bar D)$ (see Theorem \ref{thm:sobolevembedding}).

%

The very weak solution to the Helmholtz equation has singularities at the sources $x_{k} \in D$ if $\tau(u) = \sum_{k = 1}^\infty \alpha_k \delta_{x_k}$ and $\alpha_{k}\neq 0$ (see the proof of Proposition \ref{prop:Greensfct_exist_regul}; in particular \eqref{eq:proof:besselphi}). This makes it complicated to estimate the norm of the very weak solution on the entire domain $D$. 
Therefore, we restrict the possible source locations $x$ to a source domain and the given measurement points $z \in D$ to a measurement domain and demand a positive distance of the domains to each other.

\begin{definition}[Source and measurement domain]
For $\kappa > 0$ the source domain $D_\kappa \subseteq D$ is a set satisfying $\operatorname{dist}(D_\kappa, \Gamma) > \kappa$. The measurement domain is defined as 
\begin{equation*}
M_\kappa := \{ x \in D \, | \, \operatorname{dist}(x, D_\kappa) > \kappa \text{  and  } \operatorname{dist}(x, \Gamma) > \kappa \}.
\end{equation*}
\end{definition}
With restriction to these domains we can derive estimates of the norms $\| \cdot \|_{H^2(M_\kappa)}, \| \cdot \|_{W^{2, \infty}(M_\kappa)}$ or point evaluations of the form $y_u(z)$ for $z \in M_\kappa$ with sources in $D_\kappa$. 
We further allow for an infinite amount of sources. 
\begin{definition}[Source set]
The space $\ell^1(\mathbb{C} \times \mathbb{R}^d)$ is the space of sequences $(\alpha_\ell, x_\ell)_{\ell = 1}^\infty \subseteq \mathbb{C} \times \mathbb{R}^d$ such that 
\begin{equation*}
\| (\alpha_\ell, x_\ell)_{\ell = 1}^\infty \|_{\ell^1(\mathbb{C} \times \mathbb{R}^d)} := \sum_{\ell = 1}^\infty (|\alpha_{\ell}| + \| x_\ell \|) < \infty.
\end{equation*}
We denote this Banach space by $\ell^1$. We define the set $\ell^1_\kappa$ as the restriction of $\ell^1$ to the source domain
\begin{equation*}
\ell^1_\kappa := \{(\alpha_\ell, x_\ell)_{\ell = 1}^\infty \in \ell^1 \, | \, x_\ell \in D_\kappa, \quad \text{for all } \ell \in \mathbb{N} \}.
\end{equation*}
Moreover, we introduce the mapping $\tau$ which maps intensities and sources to a series of dirac measures 
\begin{equation*}
\tau((\alpha_\ell, x_\ell)_{\ell = 1}^\infty) := \sum_{\ell = 1}^\infty \alpha_\ell \delta_{x_\ell}.
\end{equation*}
\end{definition}
The general assumption is now that $D_\kappa$ and $\ell^1_\kappa$ are non-empty. The definition of $\ell^1_\kappa$ requires that $0 \in D_\kappa$, otherwise $\ell^1_\kappa$ is empty. This assumption can be bypassed by an appropriate shift of the domain. Thus we assume w.l.o.g. that $0 \in D_\kappa$.

We now study the very weak solution $y_u$ of the Helmholtz equation. We proceed as in \cite[Section 3.2]{[BeGaRo]} to prove existence, uniqueness and norm estimates with respect to the data. Furthermore, we are interested in sensitivity properties of $y_u$ with respect to $u$. 
Later, we want $u$ to follow some prior probability distribution.
This requires that all appearing constants in this section are independent of $u$. 
After the results for the infinite dimensional problem have been established, we continue to prove similar results for solutions $y_{u,h}$ of a discretized problem and show convergence rates.
\subsection{Homogeneous Neumann Data and a Single Sound Source}
We first consider the Helmholtz equation \eqref{eq:helmholtzeq} with a single source source and no outside noise, i.e. $\tau(u) = \delta_x$ and $g=0$. Here $x\in D_\kappa$ and $u = \left( (1,x), (0,0), (0,0), \dots \right) \in \ell^1_\kappa$.

In the following, we will repeat the main steps from \cite[section 3.2]{[BeGaRo]} to show existence and regularity of the very weak solution. We prove both at the same time using a decomposition argument. The main reasons why we not just refer to \cite[section 3.2]{[BeGaRo]} are convienience of the reader and the fact that we need to prove local Lipschitz continuity of the Helmholtz equation with respect to its forcing term (e.g. in Section \ref{sec:bayesian_approach}), which is not done in \cite[section 3.2]{[BeGaRo]}.

\begin{proposition} \label{prop:uniqueness}
Very weak solutions to the Helmholtz equation are unique.
\end{proposition}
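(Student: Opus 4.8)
The plan is to argue uniqueness by linearity: since the Helmholtz equation (in the very weak sense) is linear in the solution, it suffices to show that the only very weak solution with $\tau(u) = 0$ and $g = 0$ is $y \equiv 0$. So suppose $y \in L^2(D)$ satisfies
\begin{align*}
\int\limits_D y\left(-\Delta \overline{v}\right) - \left(\frac{\zeta}{c}\right)^2 y\overline{v}\,dx = 0
\end{align*}
for all $v \in H^2(D)$ with $\partial_\nu v + i\frac{\zeta\rho}{\bar\gamma_\zeta} v = 0$ on $\Gamma_Z$ and $\partial_\nu v = 0$ on $\Gamma_N$. The idea is to test against a cleverly chosen $v$, namely the solution of the \emph{adjoint} (here, essentially self-adjoint up to conjugation of the impedance constant) boundary value problem with right-hand side $\overline{y}$: find $v \in H^2(D)$ with
\begin{align*}
-\Delta v - \left(\frac{\zeta}{c}\right)^2 v = \overline{y} \ \text{ in } D, \qquad \partial_\nu v + i\frac{\zeta\rho}{\bar\gamma_\zeta} v = 0 \ \text{ on } \Gamma_Z, \qquad \partial_\nu v = 0 \ \text{ on } \Gamma_N.
\end{align*}
If such a $v$ exists and is admissible as a test function, then plugging it in gives $\int_D y\overline{y}\,dx = \|y\|_{L^2(D)}^2 = 0$, hence $y = 0$.

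The steps, in order, are: (i) reduce to the homogeneous problem by linearity; (ii) establish that for every $f \in L^2(D)$ the auxiliary problem above has an $H^2(D)$ solution $v$ — this is a standard Fredholm-alternative argument for the Helmholtz operator on a convex polygonal/polyhedral domain with these mixed Robin/Neumann conditions, combined with elliptic regularity to get $H^2$ (convexity of $D$ is what buys the $H^2$ shift; the bilinear form is coercive up to a compact perturbation because $\Gamma_Z$ has positive measure and $\beta_\zeta \neq 0$, so the imaginary part of the boundary term separates the kernel); (iii) check the compatibility: the conjugated impedance constant $i\frac{\zeta\rho}{\bar\gamma_\zeta}$ in the adjoint problem is exactly the one appearing in the test-function constraint of Definition \ref{def:veryweaksol}, so $v$ is a legitimate test function; (iv) substitute and read off $\|y\|_{L^2(D)}^2 = 0$.

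The main obstacle is step (ii): one must know that the adjoint Helmholtz problem is well posed (i.e. that $(\zeta/c)^2$ is not an eigenvalue of the corresponding boundary value problem) and that its solution actually lies in $H^2(D)$ rather than merely $H^1(D)$. Well-posedness relies on the impedance boundary condition on $\Gamma_Z$: testing the homogeneous adjoint equation with $v$ itself and taking imaginary parts forces $v = 0$ on $\Gamma_Z$, and then a unique-continuation / Holmgren-type argument (or the explicit structure used later for the Green's function) gives $v \equiv 0$, so the Fredholm alternative yields existence for every right-hand side. The $H^2$-regularity is where convexity of the polygonal/polyhedral domain enters, via the standard regularity theory for second-order elliptic problems on convex domains; this is presumably quoted from or parallel to \cite[section 3.2]{[BeGaRo]}. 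Everything else is routine once $v$ is in hand.
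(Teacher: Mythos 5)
Your argument is essentially the paper's: the paper phrases it abstractly as the transposition method (the map $v\mapsto -\Delta v-(\zeta/c)^2 v$ on the space of admissible test functions is an isomorphism onto $L^2(D)$ by the weak $H^2$-theory of \cite[Theorem 3.3]{[BeGaRo]}, hence so is its adjoint, forcing $y=0$), while you instantiate the same duality with one concrete right-hand side. Both hinge on exactly the ingredient you isolate in step (ii), and your sketch of why the auxiliary problem is well posed (imaginary parts on $\Gamma_Z$ plus unique continuation, $H^2$-shift from convexity) is the content of the cited reference.

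One slip: with your choice of right-hand side $\overline{y}$ you get $-\Delta\overline{v}-(\zeta/c)^2\overline{v}=y$, so the very weak formulation yields $\int_D y\cdot y\,dx=0$, i.e. $\int_D y^2\,dx=0$, which does \emph{not} imply $y=0$ for complex-valued $y$. Take the right-hand side to be $y$ instead; then $-\Delta\overline{v}-(\zeta/c)^2\overline{v}=\overline{y}$ and the formulation gives $\int_D y\overline{y}\,dx=\lVert y\rVert_{L^2(D)}^2=0$ as intended. With that correction the proof goes through.
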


\begin{proof}
Because the Helmholtz equation, and in particular its very weak formulation, are (affine) linear in the right hand side, it is sufficient to prove that $g=0, \tau(u) = 0$ imply that any very weak solution satisfies $y = 0$. Let $y$ be such a solution. We will use the so-called transposition method. Consider the following linear map
\begin{align*}
	\mathcal{D}_a \colon \mathcal{D} \rightarrow L^2(D), ~ v \mapsto -\Delta v -\left( \frac{\zeta}{c} \right)^2 v
\end{align*}
with the vector space
\begin{align*}
	\mathcal{D}:=
	\left\lbrace
	v\in H^2(D) :
		\partial_\nu v -i\frac{ \zeta \rho}{\gamma_{\zeta}} v=0 \text{ on }\Gamma_{Z}	\text{ and }\partial_\nu v=0\text{ on }\Gamma_{N}
	\right\rbrace.
\end{align*}
By the continuity of the trace operator this is even a Banach space by virtue of being a closed subspace of $H^2(D)$.

All functions in $\mathcal{D}$ are weak solutions of the Helmholtz equation with right hand side $-\Delta v -\left( \frac{\zeta}{c} \right)^2 v$ and boundary data $g=0$. Due to appropriate $H^2(D)$-estimates for weak solutions of the Helmholtz equation, see \cite[Theorem 3.3, (3.4)]{[BeGaRo]}, we can show that $\mathcal{D}_a$ is a continuous and surjective operator. The uniqueness of weak solutions to \eqref{eq:weakSol}, see \cite[Theorem 3.3]{[BeGaRo]}, implies that $\mathcal{D}_a$ is an isomorphism.

By the definition of $y$ we have for any $v\in \mathcal{D}$:
\begin{align*}
0 = (y,\mathcal{D}_a v)_{L^2(D)} = (\mathcal{D}_a^*y, v)_{\mathcal{D}^*, \mathcal{D}}.
\end{align*}
This entails $\mathcal{D}_a^*y = 0$ and, because adjoints of isomorphims are isomorphisms as well, finally $y=0$.
\end{proof}

\begin{proposition} \label{prop:Greensfct_exist_regul} \label{prop:Greensfct_continuity}
The Helmholtz equation
\begin{align}\label{eq:Greens_fct}
\begin{matrix}\left\{
\begin{matrix}
-\Delta G^x-\left( \frac{\zeta}{c} \right)^2 G^x & = & \delta_x & \text{ in }D,  \\
\partial_\nu G^x -i\frac{ \zeta \rho}{\gamma_{\zeta}} G^x  & = & 0 & \text{ on }\Gamma_Z, \\
\partial_\nu G^x & = & 0 & \text{ on }\Gamma_N.
\end{matrix}
\right.
\end{matrix}
\end{align}
has a unique very weak solution $G^x \in L^2(D)$ for any $x \in D_\kappa$. Additionally there exists a $C_\kappa > 0$ such that
\begin{align*}
\lVert G^x \rVert_{H^2(M_\kappa)}, \lVert G^x \rVert_{W^{2, \infty}(M_\kappa)} \leq C_\kappa \qquad \text{for all } x\in D_\kappa
\end{align*}
and
\begin{align*}
\lVert G^x - G^y \rVert_{H^2(M_\kappa)} \leq C_\kappa \| x-y \| \qquad \text{for all } x,y\in D_\kappa \text{ with } \lVert x-y \rVert < \frac{1}{2}\kappa.
\end{align*}
\end{proposition}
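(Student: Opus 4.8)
The plan is to use an explicit decomposition $G^x = \Phi^x + w^x$, where $\Phi^x$ is a fundamental solution of the Helmholtz operator $-\Delta - (\zeta/c)^2$ in $\mathbb{R}^d$ (so that $\Phi^x$ carries the singularity at $x$), and $w^x$ is the regular remainder solving a Helmholtz problem with smooth data. Concretely, for $d=2,3$ one takes $\Phi^x(\cdot) = \phi(\|\cdot - x\|)$ with $\phi$ built from the appropriate Bessel/Hankel function (this is the place where the referenced formula \eqref{eq:proof:besselphi} enters and where one sees that $\Phi^x$ depends only on the distance to $x$). Then $w^x := G^x - \Phi^x$ must satisfy, in the very weak / weak sense,
\begin{align*}
-\Delta w^x - \left(\tfrac{\zeta}{c}\right)^2 w^x &= 0 \quad \text{in } D,\\
\partial_\nu w^x - i\tfrac{\zeta\rho}{\gamma_\zeta} w^x &= -\partial_\nu \Phi^x + i\tfrac{\zeta\rho}{\gamma_\zeta}\Phi^x \quad \text{on } \Gamma_Z,\\
\partial_\nu w^x &= -\partial_\nu \Phi^x \quad \text{on } \Gamma_N.
\end{align*}
Since $x \in D_\kappa$ has distance greater than $\kappa$ from $\Gamma$, the functions $\Phi^x$, $\partial_\nu\Phi^x$ restricted to $\Gamma$ are smooth with all derivatives bounded uniformly in $x \in D_\kappa$; hence the boundary data above lie in $H^{1/2}(\Gamma_N)$ (resp.\ the relevant space on $\Gamma_Z$) with norm bounded by a constant depending only on $\kappa$. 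Existence and uniqueness of $w^x$ as a weak $H^1(D)$ solution, together with the $H^2(D)$ estimate $\|w^x\|_{H^2(D)} \le C$, then follow from the results already cited from \cite[Theorem 3.3]{[BeGaRo]}; uniqueness of $G^x$ itself is Proposition \ref{prop:uniqueness}.

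Next I would obtain the interior bounds on $M_\kappa$. On $M_\kappa$ one has $\operatorname{dist}(M_\kappa, D_\kappa) > \kappa$, so for every $x \in D_\kappa$ the fundamental solution $\Phi^x$ is real-analytic on a neighbourhood of $\overline{M_\kappa}$, with $\|\Phi^x\|_{W^{2,\infty}(M_\kappa)} \le C_\kappa$ and $\|\Phi^x\|_{H^2(M_\kappa)} \le C_\kappa$, the constant depending only on $\kappa$, $d$, and $\zeta/c$ (explicit from the Bessel-function formula). Combined with $\|w^x\|_{H^2(D)} \le C$ and the Sobolev embedding $H^2(M_\kappa) \hookrightarrow C(\overline{M_\kappa})$ (Theorem \ref{thm:sobolevembedding}), and with an interior elliptic-regularity / Caccioppoli bootstrap applied to the homogeneous equation $-\Delta w^x = (\zeta/c)^2 w^x$ on a slightly larger interior set to upgrade $w^x$ to $W^{2,\infty}(M_\kappa)$, this yields $\|G^x\|_{H^2(M_\kappa)}, \|G^x\|_{W^{2,\infty}(M_\kappa)} \le C_\kappa$.

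Finally, for the Lipschitz estimate I would write $G^x - G^y = (\Phi^x - \Phi^y) + (w^x - w^y)$ for $x,y \in D_\kappa$ with $\|x-y\| < \kappa/2$. The difference $\Phi^x - \Phi^y$ is estimated on $M_\kappa$ directly from the mean value theorem applied to $t \mapsto \Phi^{x + t(y-x)}$, using that $\Phi^{z}$ is smooth on $\overline{M_\kappa}$ uniformly for $z$ in the $\tfrac{1}{2}\kappa$-neighbourhood of $D_\kappa$ (which still keeps distance $> \tfrac12\kappa$ from $\Gamma$ and from $M_\kappa$); this gives $\|\Phi^x - \Phi^y\|_{H^2(M_\kappa)} \le C_\kappa \|x-y\|$. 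For the remainder, $w^x - w^y$ solves the homogeneous Helmholtz equation in $D$ with boundary data $-(\partial_\nu \Phi^x - \partial_\nu\Phi^y)$ on $\Gamma_N$ and the analogous Robin difference on $\Gamma_Z$; the same smooth-dependence argument bounds these boundary data in the relevant trace norms by $C_\kappa\|x-y\|$, and the a priori estimate for weak solutions from \cite{[BeGaRo]} then gives $\|w^x - w^y\|_{H^2(D)} \le C_\kappa \|x-y\|$, hence in particular on $M_\kappa$. Adding the two contributions completes the proof. The main obstacle is the bookkeeping that makes all constants depend only on $\kappa$ (and the fixed PDE data) and not on the individual source point $x$: this is exactly what the uniform separation $\operatorname{dist}(D_\kappa,\Gamma) > \kappa$ and $\operatorname{dist}(M_\kappa, D_\kappa) > \kappa$ are designed to provide, so one must be careful that the enlarged $\tfrac12\kappa$-neighbourhoods used in the Lipschitz step still respect these separations — which they do, by the triangle inequality.
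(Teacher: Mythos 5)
Your proposal is correct and follows essentially the same route as the paper: the decomposition $G^x = \Phi^x + p^x$ into the explicit fundamental solution and an $H^2(D)$-regular corrector whose boundary data are induced by $\Phi^x$, uniform-in-$x$ bounds obtained from the separations $\operatorname{dist}(D_\kappa,\Gamma)>\kappa$ and $\operatorname{dist}(M_\kappa,D_\kappa)>\kappa$, and the mean value theorem along the segment $[x,y]$ for the Lipschitz estimate. The only cosmetic difference is the $W^{2,\infty}(M_\kappa)$ bound, which the paper gets by citing an interior maximum-norm estimate from \cite{[BeGaRo]} rather than by your Caccioppoli-type interior bootstrap.
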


\begin{proof}
The uniqueness follows from Proposition \ref{prop:uniqueness}.

We introduce the fundamental solution of the Helmholtz equation in the whole space, i.e.
\begin{align*}
\begin{matrix}
-\Delta \Phi^x - \left( \frac{\zeta}{c} \right)^2 \Phi^x = \delta_x & \text{in } \mathbb{R}^d.
\end{matrix}
\end{align*}
This equation has to be understood in the sense of (tempered) distributions in $\mathscr{S}(\mathbb{R}^d)^*$, i.e.
\begin{equation} \label{eq:fundamental_solution}
\int_{\mathbb{R}^d} - \Phi^x \Delta \overline{v} - \left( \frac{\zeta}{c} \right)^2 \Phi^x \overline{v} \,dx = \bar v(x) \qquad \text{for all } v \in \mathscr{S}(\mathbb{R}^d).
\end{equation}
The space $\mathscr{S}(\mathbb{R}^d)$ is introduced in \cite[pp. 1058-1061]{zaidler1990nonlinear}. It is not particularly important for our purposes.
%
With this definition the solution of \eqref{eq:fundamental_solution} is given by
\begin{equation} \label{eq:proof:besselphi}
\begin{split}
\Phi^x(z):=\left\lbrace
 \begin{matrix}
\frac{1}{2\pi} Y_0\left( \frac{\zeta}{c} \Vert x-z \Vert \right) & \text{if }d=2, \\ \\
\frac{\exp \left(-\frac{\zeta}{c} \Vert x-z \Vert\right)}{4 \pi \Vert x-z \Vert} & \text{if }d=3.
\end{matrix}
\right.
\end{split}
\end{equation}
Here, $Y_0$ denotes the zero-order second-kind Bessel function \cite[Chapter.~2,~§8.6 Proposition~27]{dautray2012mathematical}. We also refer to \cite[Chapter 1]{BowmanFrank1958ItBf} for some background on Bessel functions. In particular, there holds $\Phi^x \in C^\infty(\mathbb{R}^d\setminus \lbrace x \rbrace)$ and $\Phi^x\vert_D \in L^2(D)$. 

We introduce $p^x$ as the weak solution of
\begin{align*} 
\left\lbrace
\begin{matrix}
-\Delta p^x-\left( \frac{\zeta}{c} \right)^2 p^x &=& 0 & \text{in }D, \\
\partial_\nu p^x - \frac{i\zeta \rho}{\gamma_{\zeta}}p^x &=& -\partial_\nu \Phi^x + \frac{i\zeta \rho}{\gamma_{\zeta}} \Phi^x& \text{on }\Gamma_Z, \\
\partial_\nu p^x &=& - \partial_\nu \Phi^x &\text{on }\Gamma_N.
\end{matrix}
\right.
\end{align*}
For the exact weak formulation see \cite[(3.9)]{[BeGaRo]}, which has additional terms compared to Definition \ref{Def:Weaksolution}, since the boundary condition for $\Gamma_Z$ is not zero. Its exact form is not important for our purposes. According to \cite[Theorem 3.3]{[BeGaRo]} and the remarks before and after \cite[(3.10)]{[BeGaRo]} the solution $p^x$ exists, is unique and $p^x \in H^2(D)$. Moreover, it satisfies
\begin{equation} \label{eq:p_x}
\begin{split}
\lVert p^x \rVert_{H^2(D)} \leq C \left[ \left\lVert \frac{i\zeta\rho}{\gamma_{\zeta}}\Phi^x - \partial_\nu \Phi^x \right\rVert_{H^\frac{1}{2}(\Gamma_Z)} + \left\lVert \partial_\nu  \Phi^x \right\rVert_{H^\frac{1}{2}(\Gamma_N)} \right].
\end{split}
\end{equation}

This implies that $G^x = p^x + \Phi^x|_D$ is the very weak solution to \eqref{eq:Greens_fct}. By this and \eqref{eq:traceest} we have for any $\kappa > 0$
\begin{equation} \label{eq:proof:localH2estimateforGx}
\begin{split}
\lVert G^x \rVert_{H^2(D \setminus B_\kappa(x) )} \leq \lVert p^x \rVert_{H^2(D \setminus B_\kappa(x) )} + \lVert \Phi^x \rVert_{H^2(D \setminus B_\kappa(x) )} \leq C \lVert \Phi^x \rVert_{H^2(D \setminus B_\kappa(x) )}.
\end{split}
\end{equation}
Note that
\begin{equation*}
	Y_0(r)\sim \log(r),\quad Y^{(k)}_0(r) \sim 1/r^k
\end{equation*}
for $r>0$, $r\rightarrow 0$ and $k\in\mathbb{N}$ by \cite[Chapter 1, (1.13) et seq.]{BowmanFrank1958ItBf}. 
Moreover, for any $k\in\mathbb{N}_0$ we have
\begin{align*}
	\left(\frac{\exp \left(-\frac{\zeta}{c} r\right)}{4 \pi r}\right)^{(k)} \sim \frac{1}{r^k}
\end{align*}
for $r>0$, $r\rightarrow 0$. We can thus conclude that each derivative $\partial^k$ of order $k$ of $\Phi^x$ satisfies
\begin{align} \label{eq:proof:phiest}
|\partial^k \Phi^x(z)| \leq
\begin{cases}
 C |\log(\|x-z\|)| & \text{ if } 2-d-k = 0,\\
 C \|x-z\|^{2-d-k} & \text{ else }.
 \end{cases}
\end{align}
Finally, $|x - z| \geq \kappa$ for $z \in D \setminus B_\kappa(x)$ yields the estimate
\begin{equation*}
\lVert \Phi^x \rVert_{H^2(D \setminus B_\kappa(x) )} \le C \kappa^{-d}
\end{equation*} 
for a constant $C>0$ which is independent of $\kappa$ and $x$. Because $\operatorname{dist}(x, \Gamma) > \kappa$, the right hand side in \eqref{eq:p_x} can be bounded similarly using the same arguments. The estimate in the $W^{2, \infty}(M_\kappa)$-norm follows from \cite[Lemma 3.5]{[BeGaRo]} with $D_0 := M_\kappa$.

To see the Lipschitz continuity of $G^\cdot$ let $x,y\in D_\kappa$. Observe that
\begin{align} \label{eq:proof:meanvalueLipschitzGreen0}
\|G^x - G^y\|_{H^2(M_\kappa)} 
\leq  \|p^x - p^y\|_{H^2(D \setminus B_\kappa(x))} +  \|\Phi^x - \Phi^y\|_{H^2(D \setminus B_\kappa(x))} \leq C \|\Phi^x - \Phi^y\|_{H^2(D \setminus B_\kappa(x))}
\end{align}
by \eqref{eq:p_x} and \eqref{eq:traceest}.

Let $z\in M_\kappa$. Note that for each $\xi$ in the line $[x,y]$ we have:
\begin{align*}
\kappa < \| x - z\| \leq \| x - \xi \| + \| z - \xi \| < \frac{1}{2}\kappa+ \| z - \xi \|
\end{align*}
and thus $\frac{1}{2} \kappa < \| z - \xi \|$. Thus for for any $k \in\mathbb{N}_0$ we have by the mean value theorem and \eqref{eq:proof:phiest}
\begin{align*}
|\partial^k \Phi^x(z) - \partial^k \Phi^y(z)| = |\partial^k \Phi^z(x) - \partial^k \Phi^z(y)| \leq C_{\kappa,k} \lVert x - y \rVert.
\end{align*}

We conclude
\begin{equation} \label{eq:proof:meanvalueLipschitzGreen2}
\|\Phi^x - \Phi^y\|_{H^2(D \setminus B_\kappa(x))} \le C_\kappa \|x - y\|.
\end{equation}
This inserted into \eqref{eq:proof:meanvalueLipschitzGreen0} yields the claim.
\end{proof}

\subsection{Inhomogeneous Neumann Data and Multiple Sound Sources}
In this section we present an existence and uniqueness result for the very weak solution of the Helmholtz equation in the general setting.

\begin{theorem} \label{cor:solve_hh_general} \label{cor:continuity_general_hh}
Let $u \in \ell^1_\kappa$ and $g\in H^\frac{1}{2}(\Gamma_N)$ be given. Then the problem
\begin{align} \label{eq:hh_general} 
\begin{matrix}\left\{
\begin{matrix}
-\Delta y_u -\left( \frac{\zeta}{c} \right)^2 y_u & = & \tau(u) & \text{ in }D,  \\
\partial_\nu y_u -i\frac{ \zeta\cdot \rho}{\gamma_{\zeta}} y_u  & = & 0 & \text{ on }\Gamma_Z, \\
\partial_\nu y_u & = & g & \text{ on }\Gamma_N
\end{matrix}
\right.
\end{matrix}
\end{align}
has a unique very weak solution $y_u \in H^2(M_\kappa) \cap L^2(D)$ which satisfies
\begin{align*}
\lVert y_u \rVert_{H^2(M_\kappa)} \leq C_\kappa \left( \|u \|_{\ell^1} + \lVert g \rVert_{H^\frac{1}{2}(\Gamma_N)} \right)
\end{align*}
with a constant $C_\kappa > 0$ depending on $\kappa$ but not on $u$.  For $u,v\in \ell^1_\kappa$ with $\|u - v\|_{\ell^1} < \frac{1}{2} \kappa$ we find
\begin{equation*}
\|y_u - y_v\|_{H^2(M_\kappa)} \le C_\kappa (\|u\|_{\ell^1} + 1) \|u - v\|_{\ell^1};
\end{equation*}
i.e. $y_{(\cdot)}: \ell^1_\kappa \rightarrow H^2(M_\kappa)$ is a non-linear locally Lipschitz continuous function.
\end{theorem}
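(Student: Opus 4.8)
The plan is to build the solution $y_u$ by superposition. Writing $u = (\alpha_\ell, x_\ell)_{\ell=1}^\infty \in \ell^1_\kappa$, set $y_u := \sum_{\ell=1}^\infty \alpha_\ell G^{x_\ell} + w_g$, where $G^{x_\ell}$ is the Green's function from Proposition~\ref{prop:Greensfct_exist_regul} and $w_g \in H^2(D)$ is the weak solution of \eqref{eq:hh_general} with $\tau(u) = 0$ but inhomogeneous Neumann data $g$, whose existence, uniqueness and estimate $\|w_g\|_{H^2(D)} \le C \|g\|_{H^{1/2}(\Gamma_N)}$ follow from \cite[Theorem 3.3]{[BeGaRo]}. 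First I would check that the series converges: since all $x_\ell \in D_\kappa$ we have $M_\kappa \subseteq D \setminus B_\kappa(x_\ell)$ for every $\ell$, so by \eqref{eq:proof:localH2estimateforGx}–type bounds (more precisely the uniform bound $\|G^{x_\ell}\|_{H^2(M_\kappa)} \le C_\kappa$ in Proposition~\ref{prop:Greensfct_exist_regul}) the partial sums form a Cauchy sequence in $H^2(M_\kappa)$ with $\|\sum_\ell \alpha_\ell G^{x_\ell}\|_{H^2(M_\kappa)} \le C_\kappa \sum_\ell |\alpha_\ell| \le C_\kappa \|u\|_{\ell^1}$; convergence in $L^2(D)$ is analogous using $\|G^x\|_{L^2(D)} \le C$ uniformly (this uses $\operatorname{dist}(x,\Gamma) > \kappa$ and the $L^2$-bound on $\Phi^x|_D$ together with \eqref{eq:p_x}). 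Adding $w_g$ gives the asserted a priori estimate with the constant independent of $u$.

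Next I would verify that this $y_u$ is indeed a very weak solution in the sense of Definition~\ref{def:veryweaksol}: testing against $v \in H^2(D)$ with the homogeneous adjoint boundary conditions, each term $\alpha_\ell G^{x_\ell}$ contributes $\alpha_\ell \overline{v(x_\ell)}$ by the defining property of $G^{x_\ell}$, and the sum $\sum_\ell \alpha_\ell \overline{v(x_\ell)}$ converges absolutely because $\|v\|_{C(\bar D)} \le C\|v\|_{H^2(D)}$ by the Sobolev embedding (Theorem~\ref{thm:sobolevembedding}) and $\sum_\ell |\alpha_\ell| < \infty$; one then interchanges the sum with the integral against $-\Delta\bar v - (\zeta/c)^2 \bar v$, justified by the $L^2(D)$-convergence of the partial sums. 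The contribution of $w_g$ reproduces the $\int_{\Gamma_N} g \bar v\, dS$ term. Uniqueness is immediate from Proposition~\ref{prop:uniqueness}.

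For the local Lipschitz estimate, let $u = (\alpha_\ell, x_\ell)_\ell$, $v = (\beta_\ell, y_\ell)_\ell$ in $\ell^1_\kappa$ with $\|u-v\|_{\ell^1} < \tfrac12\kappa$; since the $g$-part is identical it cancels, so $y_u - y_v = \sum_\ell (\alpha_\ell G^{x_\ell} - \beta_\ell G^{y_\ell})$. I would split each summand as $\alpha_\ell(G^{x_\ell} - G^{y_\ell}) + (\alpha_\ell - \beta_\ell) G^{y_\ell}$. The constraint $\|u - v\|_{\ell^1} < \tfrac12\kappa$ forces $\|x_\ell - y_\ell\| < \tfrac12\kappa$ for every $\ell$, so the Lipschitz bound of Proposition~\ref{prop:Greensfct_exist_regul} applies to give $\|G^{x_\ell} - G^{y_\ell}\|_{H^2(M_\kappa)} \le C_\kappa \|x_\ell - y_\ell\|$; summing, the first group of terms is bounded by $C_\kappa \big(\sum_\ell |\alpha_\ell|\big)\big(\sup_\ell \|x_\ell - y_\ell\|\big) \le C_\kappa \|u\|_{\ell^1}\|u-v\|_{\ell^1}$, and the second group by $C_\kappa \sum_\ell |\alpha_\ell - \beta_\ell| \le C_\kappa \|u-v\|_{\ell^1}$ using the uniform bound on $\|G^{y_\ell}\|_{H^2(M_\kappa)}$. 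Combining yields $\|y_u - y_v\|_{H^2(M_\kappa)} \le C_\kappa(\|u\|_{\ell^1} + 1)\|u - v\|_{\ell^1}$.

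The main obstacle is bookkeeping rather than any deep difficulty: one must be careful that the "same" index $\ell$ in $u$ and $v$ really does pair sources that are close — this is where the hypothesis $\|u-v\|_{\ell^1} < \tfrac12\kappa$ is used, and it must be invoked termwise to license the local Lipschitz bound for $G$. A secondary point requiring care is the interchange of the infinite sum with the test-function integral in verifying the very weak formulation; this is handled by the $L^2(D)$-convergence of partial sums established in the first step, so that passing to the limit under $\int_D (\cdot)(-\Delta\bar v - (\zeta/c)^2\bar v)\,dx$ is legitimate. Everything else is a routine application of the single-source results of Proposition~\ref{prop:Greensfct_exist_regul} and the triangle inequality.
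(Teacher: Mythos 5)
Your proposal is correct and follows essentially the same route as the paper: decompose $y_u$ as the Green's-function series $\sum_\ell \alpha_\ell G^{x_\ell}$ plus the solution for the boundary datum $g$, use the uniform and Lipschitz bounds of Proposition~\ref{prop:Greensfct_exist_regul} termwise, and get uniqueness from Proposition~\ref{prop:uniqueness}. The only (immaterial) difference is in the Lipschitz step, where you split $\alpha_\ell G^{x_\ell}-\beta_\ell G^{z_\ell}$ as $\alpha_\ell(G^{x_\ell}-G^{z_\ell})+(\alpha_\ell-\beta_\ell)G^{z_\ell}$ while the paper attaches the amplitude difference to $G^{x_\ell}$ and therefore needs the auxiliary bound $|\beta_\ell|\le|\alpha_\ell|+\tfrac12\kappa$; your variant avoids that and is, if anything, slightly cleaner.
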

\begin{proof}
The uniqueness follows from Proposition \ref{prop:uniqueness}.

Let $u = (\alpha_\ell, x_\ell)_{\ell = 1}^\infty \in \ell^1_\kappa$ and $g\in H^\frac{1}{2}(\Gamma_N)$ be arbitrary. First we show that the very weak formulation of \eqref{eq:hh_general} is well-defined. For a  test function $v \in H^2(D)$ this formulation is given by
\begin{align*}
\int\limits_D y_u (- \Delta \overline{v}) - \left(\frac{\zeta}{c}\right)^2 y_u \overline{v} ~ dx 
= \int\limits_D \tau(u) \overline{v} dx + \int\limits_{\Gamma_N} g \overline{v}dS(x).
\end{align*}
The only critical term is the one containing the series of Diracs. Since $v \in H^2(D)\subseteq C(\overline{D})$ by Theorem \ref{thm:sobolevembedding}, this term can be bounded by
\begin{align*}
\left|\int_{D} \tau(u) \bar v dx\right| = \left|\sum_{\ell=1}^\infty \alpha_\ell \bar v(x_\ell) \right| \le \sum_{\ell = 1}^\infty |\alpha_\ell| \max_{x \in \overline{D}} |\bar v(x)| 
 \leq \|u\|_{\ell^1} \|v\|_{C(\overline{D})} < \infty.
\end{align*}
This shows well-definedness of the very weak formulation. 

The next step is to show existence, uniqueness and the norm bound of the very weak solution. We use linearity of equation \eqref{eq:hh_general} to split up the solution in the form $y_u = y_{u, 0} + y_{0, g}$ where $y_{u, 0}$ and $y_{0, g}$ shall satisfy
\begin{align} \label{eq:hh_decomposition}
\begin{split}
	\left\lbrace
	\begin{matrix}
	-\triangle y_{u, 0} -\left( \frac{\zeta}{c} \right)^2 y_{u, 0} = \tau(u) & \text{ in }D, \\
	\partial_\nu y_{u, 0}-\frac{i\zeta\rho}{\gamma_{\zeta}} y_{u, 0}=0 & \text{ on }\Gamma_Z, \\
	\partial_\nu y_{u, 0} = 0 & \text{ on }\Gamma_N,
	\end{matrix}
	\right. 
	\text{ and } 
	\left\lbrace
	\begin{matrix}
	-\triangle y_{0, g} -\left( \frac{\zeta}{c} \right)^2 y_{0, g} = 0 & \text{ in }D, \\
	\partial_\nu y_{0, g}-\frac{i\zeta\rho}{\gamma_{\zeta}} y_{0, g} = 0 & \text{ on }\Gamma_Z, \\
	\partial_\nu y_{0, g}=g & \text{ on }\Gamma_N.
	\end{matrix}
	\right.
\end{split}
\end{align}

The unique very weak solution of the first problem is given by $y_{u,0} = \sum_{\ell = 1}^\infty \alpha_\ell G^{x_\ell}$ according to Proposition \ref{prop:Greensfct_exist_regul}. We apply the bound in the same proposition and obtain a bound of the very weak solution of the form
\begin{align*}
\lVert y_{u,0} \rVert_{H^2(M_\kappa)} \leq \sum_{\ell=1}^\infty |\alpha_\ell| \lVert G^{x_\ell} \rVert_{H^2(M_\kappa)} \leq C_\kappa \sum_{\ell=1}^\infty |\alpha_\ell| 
\leq C_\kappa \|u\|_{\ell^1},
\end{align*}
where the constant $C_\kappa > 0$ does not depend on $u$.

We apply \cite[Theorem 3.3]{[BeGaRo]} to obtain existence and uniqueness of a weak solution $y_{0, g}$ for the second equation of \eqref{eq:hh_decomposition}, which satisfies
\begin{align*}
\lVert y_{0, g} \rVert_{H^2(D)} \leq C \lVert g \rVert_{H^\frac{1}{2}(\Gamma_N)}.
\end{align*}
Adding $y_{u, 0}$ and $y_{0, g}$ concludes the existence proof.

Now let $u = (\alpha_\ell, x_\ell)_{\ell = 1}^\infty \in \ell^1_\kappa$ and $v = (\beta_\ell, z_\ell)_{\ell = 1}^\infty \in \ell^1_\kappa$ be given with $\|u - v\|_{\ell^1} < \frac{1}{2} \kappa$. We decompose $y_u$ and $y_v$ as in \eqref{eq:hh_decomposition} to see that the difference of the solutions satisfies 
\begin{equation*}
y_u - y_v = y_{u, 0} - y_{v, 0}.
\end{equation*}
This allows us to assume $g = 0$. Thus a straight forward computation shows
\begin{align*}
\|y_u - y_v\|_{H^2(M_\kappa)} &\le \sum_{\ell = 1}^\infty \|\alpha_\ell G^{x_\ell} - \beta_\ell G^{z_\ell}\|_{H^2(M_\kappa)} \\
&\leq \sum_{\ell = 1}^\infty \left[|\alpha_\ell - \beta_\ell| \|G^{x_\ell}\|_{H^2(M_\kappa)} 
+ |\beta_\ell| \|G^{x_\ell} - G^{z_\ell}\|_{H^2(M_\kappa)} \right].
\end{align*}
We use the assumption $\|u - v\|_{\ell^1} < \frac{1}{2} \kappa$ which implies that for any $\ell \in \mathbb{N}$
\begin{align*}
\|x_\ell - z_\ell\| < \frac{1}{2} \kappa \qquad \text{ and } \qquad |\beta_\ell| \le |\alpha_\ell| + |\beta_\ell - \alpha_\ell| < |\alpha_\ell| + \frac{1}{2} \kappa.
\end{align*}
Applying these inequalities and Proposition \ref{prop:Greensfct_exist_regul} for every $\ell \in \mathbb{N}$ we further estimate
\begin{align*}
\|y_u - y_v \|_{H^2(M_\kappa)} 
 & \leq C_\kappa \sum_{\ell = 1}^\infty \left[ |\alpha_\ell - \beta_\ell| + \left(|\alpha_\ell| + \frac{1}{2} \kappa \right) \|x_\ell - z_\ell\| \right] \\
&\le C_\kappa \max_{\ell \in \mathbb{N}} \left\{ 1, |\alpha_\ell| + \frac{1}{2} \kappa \right\}  \sum_{\ell = 1}^\infty ( |\alpha_\ell - \beta_\ell| + \|x_\ell - z_\ell\| ) \\
&\le C_\kappa \left(\max_{\ell \in \mathbb{N}} |\alpha_\ell| + 1 \right) \|u - v \|_{\ell^1},
\end{align*}
which proves the claim.
\end{proof}

\subsection{Finite Element Spaces}
We follow \cite{[BeGaRo]} and discretize the Helmholtz equation  by piecewise linear finite elements. To this we end consider a family of triangulations $\left( \mathcal{T}_h \right)_{h>0}$ of $D$. More precisely, for each $h>0$ the set $\mathcal{T}_h$ consists of closed triangles/tetrahedrons $T\subseteq \mathbb{R}^d$ such that
\begin{align*}
\bar D = \bigcup_{T\in \mathcal{T}_h } T \qquad \text{ for all } h>0.
\end{align*}
The corners of the triangles are called nodes of $D$. We assume that no hanging nodes or hanging edges exist. More precisely, we suppose that for all $h>0$ and any $T,T^\prime\in\mathcal{T}_h$ the intersection $T\cap T^\prime$ is either empty, a single point, a common edge of $T$ and $T^\prime$ or a common facet of $T$ and $T^\prime$. The last case is only relevant for $d=3$. The domain $D$ is polygonal/polyhedral which implies the existence of such a triangulation.
For each triangulation $\mathcal{T}_h$ we write $h_T$ for the diameter of a triangle $T\in \mathcal{T}_h$. The mesh size of $\mathcal{T}_h$ is given by $h:= \max_{T\in\mathcal{T}_h}h_T$. We denote by $\rho_T$ the diameter of the largest ball contained in $T\in \mathcal{T}_h$. We make the following assumption for the remaining part of the paper:
\begin{align*}
\exists \sigma_1, \sigma_2 >0: \frac{h_T}{\rho_T} \leq \sigma_1, ~ \frac{h}{h_T} \leq \sigma_2 \qquad \text{for all } T\in \mathcal{T}_h, \forall h>0.
\end{align*}
This or similar conditions are called shape regularity and quasi-uniformity, as this for example prevents our triangles from becoming too acute or too flat. We note that these conditions imply that 
\begin{align} \label{eq:quasiuniform}
\max_{T\in\mathcal{T}_h} \rho_T \geq \frac{h}{\sigma_1\sigma_2}
\end{align}
which is also sometimes called quasi-uniformity.

	
We associate with each triangulation $\mathcal{T}_h$ the finite element space $V_h$ which consists of globally continuous and piecewise linear funtions:
\begin{align*}
V_h := \left\lbrace v_h \in H^1(D) \cap C(\bar D): v_h|_T T \text{ is affine linear for all } T\in\mathcal{T}_h \right\rbrace.
\end{align*}

\subsection{Galerkin Approximations}

\begin{definition}[Discrete solution]
The function $y_h \in V_h$ is called discrete solution, or Galerkin Approximaton, to the Helmholtz equation 
if it satisfies
\begin{align} \label{eq:discreteHH}
\begin{split}
\int\limits_D \nabla y_h \nabla \overline{v}_h - \left(\frac{\zeta}{c}\right)^2 y_h \overline{v}_h ~ dx - \frac{i \zeta \rho}{\gamma_{\zeta}}\int\limits_{\Gamma_Z} y_h\overline{v}_hdS(x) 
= \int\limits_D \tau(u) \overline{v}_h dx + \int\limits_{\Gamma_N} g \overline{v}_h dS(x) 
\end{split}
\end{align}
for all $v_h \in V_h$. This is essentially the weak formulation \eqref{Def:Weaksolution} with a different test function space. We prove existence and uniqueness of solutions to \eqref{eq:discreteHH} in Theorem \ref{thm:discretized_hh_has_solution}.
\end{definition}

The following proposition allows us to work with $L^2(D)$ functions instead of diracs in the discrete setting. For such a construction also see \cite[Theorem 1 and after Lemma 3]{Scott1973}.
\begin{proposition} \label{prop:deltaxh}
Let $x \in \overline{D}$. There is a $\delta_{x,h} \in V_h$ such that
\begin{align} \label{eq:delta_x_h_point_value}
\int_D \delta_{x, h} \bar{v}_h dx = \bar{v}_h(x), \quad \text{ for all } v_h \in V_h.
\end{align}
Furthermore we have $\lVert \delta_{x,h} \rVert_{L^2(D)} \leq C$ where $C$ does not depend on $x$, but on $h$. The mapping $x \mapsto \delta_{x, h}$ from $\overline{D}$ to $V_h$ is Lipschitz continuous.
\end{proposition}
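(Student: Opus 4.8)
The plan is to construct $\delta_{x,h}$ explicitly via the Riesz representation theorem on the finite-dimensional space $V_h$, and then to obtain the $L^2$-bound and the Lipschitz dependence on $x$ by comparing the relevant finite-dimensional linear functionals. First I would equip $V_h$ with the $L^2(D)$-inner product (restricted to $V_h$), which makes $V_h$ a finite-dimensional Hilbert space. For fixed $x \in \overline D$ the map $v_h \mapsto \overline{v_h(x)}$ is a bounded conjugate-linear functional on $V_h$ (bounded because $V_h$ is finite-dimensional, or more concretely because point evaluation at a node is controlled by the coefficients and all norms on $V_h$ are equivalent). Hence there is a unique $\delta_{x,h} \in V_h$ with $\int_D \delta_{x,h}\,\overline{v_h}\,dx = \overline{v_h(x)}$ for all $v_h \in V_h$, which is exactly \eqref{eq:delta_x_h_point_value}.

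Next I would establish the uniform (in $x$, not in $h$) bound $\|\delta_{x,h}\|_{L^2(D)} \le C$. Testing \eqref{eq:delta_x_h_point_value} with $v_h = \delta_{x,h}$ gives $\|\delta_{x,h}\|_{L^2(D)}^2 = \overline{\delta_{x,h}(x)} \le \|\delta_{x,h}\|_{C(\overline D)}$. Since $V_h$ is finite-dimensional, there is a constant $C_h$ (depending on $h$) with $\|w_h\|_{C(\overline D)} \le C_h \|w_h\|_{L^2(D)}$ for all $w_h \in V_h$ — this is just equivalence of norms on the finite-dimensional space $V_h$, and one can make $C_h$ explicit using the local inverse estimate $\|w_h\|_{L^\infty(T)} \le C \rho_T^{-d/2}\|w_h\|_{L^2(T)}$ together with the quasi-uniformity bound \eqref{eq:quasiuniform}, but the crude statement suffices. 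Therefore $\|\delta_{x,h}\|_{L^2(D)}^2 \le C_h \|\delta_{x,h}\|_{L^2(D)}$, and dividing through yields $\|\delta_{x,h}\|_{L^2(D)} \le C_h =: C$, a bound independent of $x$.

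For the Lipschitz continuity of $x \mapsto \delta_{x,h}$, fix $x, y \in \overline D$ and note that for every $v_h \in V_h$
\begin{align*}
\int_D (\delta_{x,h} - \delta_{y,h})\,\overline{v_h}\,dx = \overline{v_h(x)} - \overline{v_h(y)}.
\end{align*}
Choosing $v_h = \delta_{x,h} - \delta_{y,h}$ and writing $w_h := \delta_{x,h} - \delta_{y,h}$, we get $\|w_h\|_{L^2(D)}^2 = \overline{w_h(x)} - \overline{w_h(y)} \le \|w_h\|_{C^{0,1}(\overline D)}\|x - y\|$, where I use that each $w_h \in V_h$ is globally Lipschitz. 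The Lipschitz seminorm of $w_h$ is controlled by $\|\nabla w_h\|_{L^\infty(D)}$, which by the inverse estimate $\|\nabla w_h\|_{L^\infty(D)} \le C_h \|w_h\|_{L^2(D)}$ (again finite-dimensionality, made quantitative via local inverse inequalities and quasi-uniformity) is $\le C_h \|w_h\|_{L^2(D)}$. Hence $\|w_h\|_{L^2(D)}^2 \le C_h \|w_h\|_{L^2(D)}\|x-y\|$, giving $\|\delta_{x,h} - \delta_{y,h}\|_{L^2(D)} \le C_h \|x - y\|$, i.e. Lipschitz continuity with an $h$-dependent constant.

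The construction itself is routine; the only real content is extracting the two quantitative estimates — control of $\|w_h\|_{C(\overline D)}$ and of the Lipschitz seminorm of $w_h$ by $\|w_h\|_{L^2(D)}$ — from finite-dimensionality. The main obstacle, if one wants more than an abstract $h$-dependent constant, is to carry the inverse inequalities and the quasi-uniformity estimate \eqref{eq:quasiuniform} through carefully so that the $h$-dependence is explicit; but since the statement only claims a constant depending on $h$, abstract norm-equivalence on $V_h$ is enough and there is no genuine difficulty.
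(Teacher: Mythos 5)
Your proof is correct and follows the same overall strategy as the paper: Riesz representation on the finite-dimensional Hilbert space $(V_h, (\cdot,\bar\cdot)_{L^2(D)})$, then the $L^2$-bound by testing with $\delta_{x,h}$ and using equivalence of norms on $V_h$, then the Lipschitz estimate by testing the difference identity with $w_h := \delta_{x,h}-\delta_{y,h}$. The one place where you genuinely deviate is the final step, and your version is actually the sharper one. The paper bounds the Lipschitz seminorm of $w_h$ by $\lVert \nabla\delta_{x_1,h}\rVert_{L^\infty(D)}+\lVert\nabla\delta_{x_2,h}\rVert_{L^\infty(D)}\le C$, i.e.\ by the $x$-uniform constant from the first part, which literally yields only $\lVert w_h\rVert_{L^2(D)}^2\le C\lVert x_1-x_2\rVert$ and hence H\"older-$\tfrac12$ continuity rather than the claimed Lipschitz continuity. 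You instead apply the inverse estimate to the difference itself, $\lVert\nabla w_h\rVert_{L^\infty(D)}\le C_h\lVert w_h\rVert_{L^2(D)}$, and divide by $\lVert w_h\rVert_{L^2(D)}$, which gives the genuine Lipschitz bound $\lVert \delta_{x,h}-\delta_{y,h}\rVert_{L^2(D)}\le C_h\lVert x-y\rVert$. So your argument both matches the paper's intent and closes a small gap in its write-up; no changes needed.
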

\begin{proof}
Let $x \in \overline{D}$ and define the functional $j_x \colon V_h \rightarrow \mathbb{C}$, $j_x(v_h) = \bar{v}_h(x)$. Because $V_h$ is finite dimensional it is a Hilbert space together with the inner product $(\cdot, \bar\cdot)_{L^2(D)}$. Thus the Riesz representation theorem implies the existence of a $\delta_{x,h}\in V_h$ so that
\begin{align*}
\bar{v}_h(x) = \int_D \delta_{x,h}, \bar v_h ~dx \qquad \text{for all } v_h\in V_h.
\end{align*}
By construction and the equivalence of norms on $V_h$ we have
\begin{align*}
\lVert \delta_{x,h} \rVert_{L^2(D)}^2 = |\delta_{x,h}(x)| \leq \Vert \delta_{x,h} \rVert_{L^\infty(D)} \leq C \lVert \delta_{x,h} \rVert_{L^2(D)}.
\end{align*}
Note that this $C$ depends on $h$, but not on $x$.

For the Lipschitz continuity let $x_1, x_2 \in \overline{D}$. We have that $\delta_{x_1, h}, \delta_{x_2, h} \in V_h$ and both are Lipschitz continuous with constants $\lVert \nabla \delta_{x_1,h} \rVert_{L^\infty(D)}$ and $\lVert \nabla \delta_{x_2,h} \rVert_{L^\infty(D)}$, see for example \cite[Proposition 2.13]{ambrosio}. This yields
\begin{align*}
 \left|\int_D (\delta_{x_1, h} - \delta_{x_2, h}) \overline{(\delta_{x_1, h} - \delta_{x_2, h})} dx \right| 
 & \leq |\overline{\delta_{x_1, h}(x_1) - \delta_{x_1, h}(x_2)}| + |\overline{\delta_{x_2, h}(x_1) - \delta_{x_2, h}(x_2)}| \\
&\leq \left( \lVert \nabla \delta_{x_1,h} \rVert_{L^\infty(D)} + \lVert \nabla \delta_{x_2,h} \rVert_{L^\infty(D)} \right) \|x_1 - x_2 \|.
\end{align*} 

Again using the equivalency of norms we can use the bound from before: $\lVert \delta_{x_1,h} \rVert_{W^{1,\infty(D)}} \leq C \lVert \delta_{x_1,h} \rVert_{L^2(D)} \leq C$, where $C$ does not depend on $x$, but on $h$. The same estimate holds for $\delta_{x_2,h}$ concluding the proof.
\end{proof}

\begin{proposition} \label{prop:discrete_regular_problem}
There exists a $h_0>0$ such that for all $h\in (0,h_0], f\in L^2(D)$ and $g\in H^{\frac{1}{2}}(\Gamma_N)$ there exists a unique discrete solution $y_{f,h} \in V_h$ of
\begin{align} \label{eq:prop:discrete_regular_problem}
\begin{split}
\int_D \nabla y_{f,h} \nabla \bar v_h - \left( \frac{\zeta}{c} \right)^2 y_{f,h}\bar v_h ~dx - \frac{i\zeta\rho}{\gamma_\zeta} \int_{\Gamma_Z} y_{f,h}\bar v_h ~dS
 = \int_D f\bar v_h~dx + \int_{\Gamma_N} g \bar v_h ~dS
\end{split}
\end{align}
for all $v_h\in V_h$. Furthermore there holds
\begin{equation*}
\|y_{f, h}\|_{L^2(D)} \leq C \left(\|f\|_{L^2(D)} + \|g\|_{H^{\frac{1}{2}}(\Gamma_N)} \right)
\end{equation*}
with $C$ independent of $f$ and $g$.
\end{proposition}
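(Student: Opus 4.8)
The plan is to set up the discrete problem as a linear system on the finite-dimensional space $V_h$ and establish invertibility together with a stability bound by a Gårding-type / compactness argument that is uniform in $h$ for small $h$. Let me write the sesquilinear form
\begin{equation*}
a_h(y, v) := \int_D \nabla y \cdot \nabla \bar v - \Big(\tfrac{\zeta}{c}\Big)^2 y \bar v \, dx - \frac{i\zeta\rho}{\gamma_\zeta}\int_{\Gamma_Z} y\bar v \, dS,
\end{equation*}
which is exactly the form appearing on the left of \eqref{eq:prop:discrete_regular_problem} (and, restricted to $V_h$, the one in \eqref{eq:discreteHH}). The right-hand side $v_h \mapsto \int_D f \bar v_h \, dx + \int_{\Gamma_N} g \bar v_h \, dS$ is a bounded antilinear functional on $H^1(D)$ with norm controlled by $\|f\|_{L^2(D)} + \|g\|_{H^{1/2}(\Gamma_N)}$ (using the trace estimate \eqref{eq:traceest} and a duality bound for the $\Gamma_N$-term). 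Since $\dim V_h < \infty$, existence and uniqueness are equivalent, so it suffices to prove that $a_h$ restricted to $V_h \times V_h$ is injective, i.e. that $a_h(y_h, v_h) = 0$ for all $v_h \in V_h$ forces $y_h = 0$, and then to extract the norm bound.

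For the uniform invertibility I would argue by contradiction in the standard way. Suppose there is a sequence $h_n \to 0$ and $y_n \in V_{h_n}$ with $\|y_n\|_{L^2(D)} = 1$ and $\sup_{v_h \in V_{h_n}} |a_{h_n}(y_n, v_h)| / \|v_h\|_{H^1(D)} \to 0$. Testing with $v_h = y_n$ and taking real parts gives $\|\nabla y_n\|_{L^2(D)}^2 \le (\zeta/c)^2 + o(1)$, so $(y_n)$ is bounded in $H^1(D)$; passing to a subsequence, $y_n \rightharpoonup y$ in $H^1(D)$ and $y_n \to y$ strongly in $L^2(D)$ and in $L^2(\Gamma_Z)$ by compactness of the embedding and the trace, hence $\|y\|_{L^2(D)} = 1$. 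For fixed $v \in H^1(D)$ one approximates it by $P_{h_n} v \in V_{h_n}$ (e.g. the $H^1$-projection or the Clément/Scott–Zhang interpolant), with $P_{h_n} v \to v$ in $H^1(D)$; passing to the limit in $a_{h_n}(y_n, P_{h_n} v) \to 0$ shows $a(y, v) = 0$ for all $v \in H^1(D)$, i.e. $y$ is a weak solution of the homogeneous Helmholtz equation. By the uniqueness of weak solutions, \cite[Theorem 3.3]{[BeGaRo]} (also invoked in the proof of Proposition \ref{prop:uniqueness}), $y = 0$, contradicting $\|y\|_{L^2(D)} = 1$. This yields an $h_0 > 0$ and a constant $c_0 > 0$ with the discrete inf-sup bound $\sup_{v_h \in V_h, v_h \neq 0} |a_h(y_h, v_h)| \ge c_0 \|y_h\|_{L^2(D)} \|v_h\|_{H^1(D)}$ for all $h \le h_0$ and $y_h \in V_h$; by finite-dimensionality this is equivalent to $a_h$ being invertible on $V_h$, giving existence and uniqueness, and the bound $\|y_{f,h}\|_{L^2(D)} \le c_0^{-1} \sup_{v_h} |\ell(v_h)|/\|v_h\|_{H^1(D)} \le C(\|f\|_{L^2(D)} + \|g\|_{H^{1/2}(\Gamma_N)})$ with $C$ independent of $f$, $g$.

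The main obstacle is making the compactness argument rigorous uniformly in the mesh: one must control the boundary term on $\Gamma_Z$ across meshes (handled by the compactness of the trace map $H^1(D) \to L^2(\Gamma)$ together with the $H^1$-boundedness of $(y_n)$) and must ensure that arbitrary test functions $v \in H^1(D)$ can be recovered in the limit from discrete ones, which is precisely where shape regularity / quasi-uniformity of $(\mathcal{T}_h)$ and the approximation property $\inf_{v_h \in V_h}\|v - v_h\|_{H^1(D)} \to 0$ enter. Everything else — boundedness of the functional, the equivalence of injectivity and bijectivity in finite dimensions, and reading off the a priori estimate from the inf-sup constant — is routine. Note this is exactly the mechanism behind Theorem \ref{thm:discretized_hh_has_solution} referenced in the text, and the $\delta_{x,h}$ construction of Proposition \ref{prop:deltaxh} is what later lets one apply this proposition with $f = \delta_{x,h}$ in place of a Dirac source.
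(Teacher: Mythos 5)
Your argument is correct, but it takes a genuinely different route from the paper. The paper's proof is essentially a two-line delegation: it records that the sesquilinear form $a$ satisfies G\r{a}rding's inequality (citing \cite[Lemma 3.2]{[BeGaRo]}) and then invokes the classical Schatz argument \cite{[Schatz]} for indefinite forms. That mechanism is Galerkin orthogonality plus an Aubin--Nitsche duality step: one bounds $\|y_f - y_{f,h}\|_{L^2(D)} \le C h \|y_f - y_{f,h}\|_{H^1(D)}$ using $H^2$-regularity of the adjoint problem on the convex domain and the approximation property of $V_h$, absorbs this into the G\r{a}rding inequality for $h \le h_0$, and reads off uniqueness (hence existence, by finite dimensionality) and the stability bound from the continuous a priori estimate. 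You instead prove a uniform discrete inf-sup constant by a compactness/contradiction argument (Rellich, compact trace, density of $\bigcup_h V_h$ in $H^1(D)$, and uniqueness of the continuous weak problem). Both are standard and valid here; Schatz's route buys the quasi-optimal $H^1$ error estimate as a byproduct but needs adjoint $H^2$-regularity, whereas yours needs only compactness and continuous uniqueness, at the price of an entirely non-constructive $h_0$ and no error estimate. One small inaccuracy: when you test with $v_h = y_n$ and "take real parts" to get $H^1$-boundedness, the real part of the $\Gamma_Z$-term is $-\rho\alpha_\zeta|\gamma_\zeta|^{-2}\int_{\Gamma_Z}|y_n|^2\,dS$, which contributes with the wrong sign on the left; you should either absorb it with an $\varepsilon$-trace inequality or simply invoke the G\r{a}rding inequality $\lvert a(q,q) + C_2\|q\|_{L^2(D)}^2\rvert \ge C_1\|q\|_{H^1(D)}^2$ that the paper already quotes (note the paper assumes only $\beta_\zeta \neq 0$, so no sign information on $\alpha_\zeta$ should be used). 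This is a cosmetic fix, not a gap.
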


\begin{proof}

We introduce a sesquilinear form on $H^1(D)$
\begin{align*}
\begin{split}
a(y, v) & := \int_D \nabla y \nabla \bar v - \left( \frac{\zeta}{c} \right)^2 y v ~dx 
 - \frac{i\zeta\rho}{\gamma_\zeta} \int_{\Gamma_Z} yv ~dx,
\end{split}
\end{align*}
which can be used to formulate Definitions \ref{Def:Weaksolution} and \ref{def:veryweaksol}. $a$ satisfies G\r{a}rding's inequality by \cite[Lemma 3.2]{[BeGaRo]}, which means that there exist constants $C_1,C_2>0$ such that $\lvert a(q,q) + C_2 \lVert q \rVert_{L^2(D)}^2 \rvert \geq C_1 \lVert q \rVert_{H^1(D)}^2$ holds for any $q\in H^1(D)$. Now one can simply use the same ideas used in the real case in \cite{[Schatz]}.
\end{proof}

Note that we have to use G\r{a}rding's inequality as the Helmholtz equation is non-elliptic.


\begin{theorem} \label{thm:discretized_hh_has_solution}
There exists a $h_0 > 0$ such that for all $h \in (0,h_0]$ the following holds: Let $g \in H^{\frac{1}{2}}(\Gamma_N)$ and $u = (\alpha_\ell, x_\ell)_{\ell = 1}^\infty \in \ell^1_\kappa$ be given. Then the Helmholtz equation has a unique discrete solution $y_{u, h} \in V_h$, that is for all $v_h \in V_h$ it satisfies 
\begin{align} \label{eq:hh_full_discrete}
\begin{split}
\int\limits_D \nabla y_{u, h} \nabla \overline{v}_h - \left(\frac{\zeta}{c}\right)^2 y_{u, h} \overline{v}_h ~ dx - \frac{i \zeta \rho}{\gamma_{\zeta}}\int\limits_{\Gamma_Z} y_{u, h} \overline{v}_hdS(x) 
= \int\limits_D \tau(u) \overline{v}_h dx + \int\limits_{\Gamma_N} g \overline{v}_h dS(x).
\end{split}
\end{align}
Furthermore, the solution is given by
\begin{equation*}
y_{u, h} = \sum_{\ell = 1}^\infty \alpha_\ell G^{x_\ell}_h + y_{g, h},
\end{equation*}
where $y_{g, h}$ is the discrete solution of the Helmholtz equation \eqref{eq:discreteHH} with zero forcing term and Neumann data $g$. $G_h^x$ is the discrete version of the Green's function and solves \eqref{eq:discreteHH} for $g=0$ and $\tau(u) = \delta_{x, h}$.
\end{theorem}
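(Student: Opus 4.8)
The plan is to build the discrete solution by the same decomposition strategy used for the continuous problem (Theorem \ref{cor:solve_hh_general}), reducing everything to the two building blocks $\delta_{x,h}$ from Proposition \ref{prop:deltaxh} and the discrete well-posedness statement of Proposition \ref{prop:discrete_regular_problem}. First I would fix $h_0 > 0$ to be the threshold from Proposition \ref{prop:discrete_regular_problem}, so that for all $h \in (0, h_0]$ every right-hand side in $L^2(D)$ together with Neumann data in $H^{1/2}(\Gamma_N)$ produces a unique discrete solution with the stated a priori bound. The key observation is that, by Proposition \ref{prop:deltaxh}, testing against $v_h \in V_h$ turns $\int_D \tau(u)\overline{v}_h\,dx = \sum_{\ell=1}^\infty \alpha_\ell \overline{v_h(x_\ell)}$ into $\int_D \big(\sum_{\ell=1}^\infty \alpha_\ell \delta_{x_\ell, h}\big)\overline{v}_h\,dx$, provided the series $\sum_{\ell=1}^\infty \alpha_\ell \delta_{x_\ell, h}$ converges in $L^2(D)$; since $\|\delta_{x_\ell,h}\|_{L^2(D)} \le C(h)$ uniformly in $x_\ell$ and $\sum_\ell |\alpha_\ell| \le \|u\|_{\ell^1} < \infty$, absolute convergence in $L^2(D)$ is immediate. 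Hence the discrete Helmholtz problem \eqref{eq:hh_full_discrete} is, word for word, problem \eqref{eq:prop:discrete_regular_problem} with forcing $f := \sum_{\ell=1}^\infty \alpha_\ell \delta_{x_\ell, h} \in L^2(D)$ and the given $g$, so existence and uniqueness of $y_{u,h} \in V_h$ follow directly from Proposition \ref{prop:discrete_regular_problem}.

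Next I would verify the claimed representation $y_{u,h} = \sum_{\ell=1}^\infty \alpha_\ell G^{x_\ell}_h + y_{g,h}$. Define $G^x_h \in V_h$ as the unique discrete solution of \eqref{eq:prop:discrete_regular_problem} with $f = \delta_{x,h}$ and $g = 0$ (well-defined by Proposition \ref{prop:discrete_regular_problem}), and $y_{g,h}$ as the unique discrete solution with $f = 0$ and the given $g$. Linearity of the sesquilinear form $a(\cdot,\cdot)$ and of the right-hand side in \eqref{eq:prop:discrete_regular_problem}, together with the uniform bound $\|G^x_h\|_{L^2(D)} \le C(h)$, shows that the partial sums $S_N := \sum_{\ell=1}^N \alpha_\ell G^{x_\ell}_h$ form a Cauchy sequence in the finite-dimensional space $V_h$, hence converge to some $S_\infty \in V_h$; passing to the limit in the (continuous, finite-dimensional) bilinear identity $a(S_N, v_h) = \sum_{\ell=1}^N \alpha_\ell \overline{v_h(x_\ell)}$ and using $\sum_{\ell > N}|\alpha_\ell| \to 0$ gives $a(S_\infty, v_h) = \sum_{\ell=1}^\infty \alpha_\ell \overline{v_h(x_\ell)}$ for all $v_h \in V_h$. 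Adding $y_{g,h}$ and invoking uniqueness from Proposition \ref{prop:discrete_regular_problem} identifies $S_\infty + y_{g,h} = y_{u,h}$, which is exactly the asserted formula.

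The only genuinely delicate point is the interchange of the infinite sum with the discrete weak formulation — i.e. justifying that the series $\sum_\ell \alpha_\ell G^{x_\ell}_h$ converges in $V_h$ and that its limit still satisfies the discrete equation with the full Dirac series on the right. This is not hard because $V_h$ is finite-dimensional and all relevant norms are equivalent, so $L^2$-summability of $\sum_\ell \alpha_\ell \delta_{x_\ell,h}$ (which rests on the uniform-in-$x$ bound $\|\delta_{x,h}\|_{L^2(D)} \le C(h)$ of Proposition \ref{prop:deltaxh}) transfers through the bounded solution operator of Proposition \ref{prop:discrete_regular_problem} to $L^2$-summability, hence $V_h$-summability, of $\sum_\ell \alpha_\ell G^{x_\ell}_h$. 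I would also remark explicitly that, in contrast to the continuous case, no restriction to a subdomain $M_\kappa$ is needed here: the discrete Green's functions $G^x_h$ live in $V_h$ and are globally bounded in $L^2(D)$, since the singularity of the continuous fundamental solution is smoothed out by the finite element discretization. With these observations the theorem follows by assembling Propositions \ref{prop:deltaxh} and \ref{prop:discrete_regular_problem}.
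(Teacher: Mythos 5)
Your proposal is correct and follows essentially the same route as the paper: replace the Dirac series by the $L^2$-convergent series $\sum_\ell \alpha_\ell \delta_{x_\ell,h}$ via Proposition \ref{prop:deltaxh}, invoke Proposition \ref{prop:discrete_regular_problem}, and identify the limit of the partial sums $\sum_\ell \alpha_\ell G^{x_\ell}_h + y_{g,h}$ as the solution. Your version is slightly more complete than the paper's, since viewing the whole forcing term as a single $L^2(D)$ function makes the uniqueness claim an immediate consequence of Proposition \ref{prop:discrete_regular_problem}, a point the paper's proof leaves implicit.
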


\begin{proof}
By definition we have $(\delta_x, v_h) = (\delta_{x,h}, v_h)$ for any $v_h\in V_h$. Thus by Proposition \ref{prop:discrete_regular_problem}, we see that
\begin{equation*}
y_{u, h} = \sum_{\ell = 1}^\infty \alpha_\ell G^{x_\ell}_h + y_{g, h}
\end{equation*}
exists and is well defined as the series converges in $L^2(D)$ by $\sum_{\ell=1}^\infty |\alpha_\ell| < \infty$ and $\lVert G^{x_\ell}_h \rVert_{L^2(D)} \leq C$ with a $C$ independent of $x$. Linearity of the integral now shows that $y_{u, h}$ is indeed a solution to \eqref{eq:hh_full_discrete}.
\end{proof}	

%

\subsection{Pointwise error estimates}
The following proposition is proven in the appendix for clarity.
\begin{proposition} \label{lem:pointwise_green_disc_error}
There exist $h_0, C_\kappa > 0$ such that the following holds:
Let $z \in M_\kappa$. Then for any $h \in (0,h_0]$ we have
\begin{align*}
|G^x(z)-G^x_h(z)| \leq C_\kappa |\ln h| h^2 \qquad \text{for all } x \in D_\kappa.
\end{align*}
\end{proposition}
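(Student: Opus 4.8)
\medskip
\noindent\emph{Proof idea.} The plan is to reduce the pointwise bound to a \emph{local} maximum-norm error estimate for the Galerkin method near $z$, and to control the remaining global ``pollution'' contribution by a duality argument; the logarithmic factor enters through the interior maximum-norm estimate for linear finite elements. Throughout, $C_\kappa$ denotes a constant depending only on $\kappa$ and the mesh-regularity constants, never on $x\in D_\kappa$, $z\in M_\kappa$, or $h\le h_0$, and $h_0>0$ is chosen small enough that Proposition~\ref{prop:discrete_regular_problem}, its adjoint analogue, and the interior estimates below all apply. Since $z\in M_\kappa$ and $x\in D_\kappa$ one has $\|z-x\|>\kappa$ and $\operatorname{dist}(z,\Gamma)>\kappa$, so I fix concentric balls $B_0:=B_{\kappa/2}(z)\Subset B_1:=B_{3\kappa/4}(z)\Subset D$, for which $\operatorname{dist}(B_1,x)>\kappa/4$ and $\operatorname{dist}(B_1,\Gamma)>\kappa/4$.

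First I would observe that the Dirac source is inactive on $B_1$, so $-\Delta G^x-(\zeta/c)^2G^x=0$ there; by interior elliptic regularity $G^x$ is smooth on $B_1$, and re-running the end of the proof of Proposition~\ref{prop:Greensfct_exist_regul} (the splitting $G^x=p^x+\Phi^x$ with \eqref{eq:proof:phiest}, \eqref{eq:p_x} and \cite[Lemma~3.5]{[BeGaRo]} applied with $D_0:=B_1$) gives $\|G^x\|_{W^{2,\infty}(B_1)}\le C_\kappa$. Moreover, for every $v_h\in V_h$ supported in $B_1$ one has $a(G^x_h,v_h)=\overline{v_h(x)}=0$ and, after integration by parts, $a(G^x,v_h)=\int_{B_1}\bigl(-\Delta G^x-(\zeta/c)^2G^x\bigr)\overline{v_h}\,dx=0$, so Galerkin orthogonality $a(G^x-G^x_h,v_h)=0$ holds locally on $B_1$. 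This is precisely what an interior (Schatz--Wahlbin type) maximum-norm estimate for linear elements requires, cf.\ \cite{[Schatz]} and the finite element analysis in \cite{[BeGaRo]}, which I would use in the form
\[
\|G^x-G^x_h\|_{L^\infty(B_0)}\le C_\kappa\lvert\ln h\rvert\,\inf_{\chi\in V_h}\|G^x-\chi\|_{L^\infty(B_1)}+C_\kappa\|G^x-G^x_h\|_{L^2(B_1)}.
\]
The best-approximation term is dominated by the nodal interpolation error $C_\kappa h^2\|G^x\|_{W^{2,\infty}(B_1)}\le C_\kappa h^2$; since $z\in B_0$, it remains to prove $\|G^x-G^x_h\|_{L^2(B_1)}\le C_\kappa\lvert\ln h\rvert h^2$.

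For this pollution term I would argue by duality. As $\operatorname{dist}(B_1,x)>\kappa/4$ we have $B_1\subseteq\widetilde D:=D\setminus\overline{B_{\kappa/4}(x)}$, so it suffices to bound $\|G^x-G^x_h\|_{L^2(\widetilde D)}$. For $\phi\in L^2(D)$ with $\operatorname{supp}\phi\subseteq\widetilde D$ and $\|\phi\|_{L^2(D)}\le 1$, let $\psi\in H^1(D)$ solve the adjoint problem $a(v,\psi)=(v,\phi)_{L^2(D)}$ for all $v\in H^1(D)$, and $\psi_h\in V_h$ its Galerkin approximation. By $H^2$-regularity of the Helmholtz problem on the convex polygonal/polyhedral domain $D$, $\psi\in H^2(D)$ with $\|\psi\|_{H^2(D)}\le C$, $\psi$ is an admissible test function in Definition~\ref{def:veryweaksol}, and $-\Delta\psi-(\zeta/c)^2\psi=\phi$ in $D$. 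Testing the very weak formulation of $G^x$ with $\psi$ and the discrete equation for $G^x_h$ with $\psi_h$ gives $(G^x,\phi)_{L^2}=\overline{\psi(x)}$ and $(G^x_h,\phi)_{L^2}=\overline{\psi_h(x)}$, hence $\lvert(G^x-G^x_h,\phi)_{L^2}\rvert=\lvert(\psi-\psi_h)(x)\rvert$. Since $\phi$ vanishes on $B_{\kappa/4}(x)$, $\psi$ is smooth near $x$ with $\|\psi\|_{W^{2,\infty}(B_{\kappa/8}(x))}\le C_\kappa\|\psi\|_{H^2(D)}\le C_\kappa$, and $a(v_h,\psi-\psi_h)=0$ for all $v_h\in V_h$; applying the interior maximum-norm estimate a second time, now to $\psi-\psi_h$ around $x$, together with the Aubin--Nitsche bound $\|\psi-\psi_h\|_{L^2(D)}\le Ch^2\|\psi\|_{H^2(D)}\le Ch^2$, yields $\lvert(\psi-\psi_h)(x)\rvert\le C_\kappa\lvert\ln h\rvert h^2$. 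Taking the supremum over $\phi$ gives $\|G^x-G^x_h\|_{L^2(\widetilde D)}\le C_\kappa\lvert\ln h\rvert h^2$, which, inserted above, proves the proposition.

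The hard part is the pollution term: a crude bound via an inverse inequality would only give $O(h^{2-d/2})$, which is useless for $d=3$, so the sharp interior maximum-norm estimate genuinely has to be invoked twice — once for $G^x-G^x_h$ near $z$, once for $\psi-\psi_h$ near $x$ — and linked through the $L^2$-duality identity above. A secondary point to be monitored is that every constant remains uniform in $x\in D_\kappa$; this holds because $x$ enters only through $\operatorname{dist}(x,B_1)\ge\kappa/4$, $\operatorname{dist}(x,\Gamma)\ge\kappa$, and the $x$-independent bounds \eqref{eq:proof:phiest}, \eqref{eq:p_x}, all controlled by $\kappa$.
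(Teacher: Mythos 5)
Your argument is correct and follows essentially the same route as the paper: both reduce the pointwise error at $z$ to the Schatz--Wahlbin interior maximum-norm estimate on concentric balls around $z$ (using local Galerkin orthogonality away from the Dirac and the uniform $W^{2,\infty}$ bound on $G^x$ from Proposition~\ref{prop:Greensfct_exist_regul}), leaving a lower-order pollution term in a global norm. The only difference is that where the paper disposes of the $L^1$-pollution term by citing \cite[Theorem 4.4]{[BeGaRo]} and \cite[Theorem 6.1]{SchatzWahlbinMaxNormEst} and ``tracking constants,'' you reconstruct that step explicitly via the adjoint/duality identity $(G^x-G^x_h,\phi)_{L^2}=\overline{(\psi-\psi_h)(x)}$ and a second interior estimate near $x$ -- which is a faithful, self-contained version of the cited argument with the required uniformity in $x$ and $z$ made visible.
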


\begin{theorem} \label{thm:pointwise_disc_error_estimate}
There exist $h_0, C_\kappa>0$ such that the following holds:
Let $z \in M_\kappa$, $u \in \ell^1_\kappa$ and $g\in H^{\frac{1}{2}}(\Gamma_N)$. Then for all $h\in (0,h_0]$ the discrete solution $y_{u, h} \in V_h$ satisfies
\begin{equation*}
|y_u(z) - y_{u, h}(z)| \leq C_\kappa |\ln h| h^2 \left( \|u\|_{\ell^1} + \lVert g \rVert_{H^\frac{1}{2}(\Gamma_N)} \right).
\end{equation*}
\end{theorem}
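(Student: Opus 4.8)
The plan is to reduce the general estimate to the pointwise Green's-function error bound of Proposition~\ref{lem:pointwise_green_disc_error} together with a standard discretization estimate for the Neumann-data part. Recall from Theorem~\ref{cor:solve_hh_general} and Theorem~\ref{thm:discretized_hh_has_solution} that, writing $u = (\alpha_\ell, x_\ell)_{\ell=1}^\infty$, we have the representations
\begin{align*}
y_u = \sum_{\ell=1}^\infty \alpha_\ell G^{x_\ell} + y_{0,g}, \qquad y_{u,h} = \sum_{\ell=1}^\infty \alpha_\ell G^{x_\ell}_h + y_{g,h},
\end{align*}
where $y_{0,g}$ is the (very) weak solution with zero forcing and Neumann data $g$, and $y_{g,h}$ its Galerkin approximation. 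Since $z \in M_\kappa$ and all the series converge (in $L^2(D)$ for the discrete part, and in $H^2(M_\kappa)$ for the continuous part, hence in $C(\overline{M_\kappa})$ by Theorem~\ref{thm:sobolevembedding}), the point evaluation at $z$ commutes with the sum, so
\begin{align*}
|y_u(z) - y_{u,h}(z)| \le \sum_{\ell=1}^\infty |\alpha_\ell|\, |G^{x_\ell}(z) - G^{x_\ell}_h(z)| + |y_{0,g}(z) - y_{g,h}(z)|.
\end{align*}

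The first step is then to apply Proposition~\ref{lem:pointwise_green_disc_error} uniformly in $\ell$: since each $x_\ell \in D_\kappa$, we get $|G^{x_\ell}(z) - G^{x_\ell}_h(z)| \le C_\kappa |\ln h| h^2$ with a constant independent of $\ell$, and summing against $|\alpha_\ell|$ bounds the first term by $C_\kappa |\ln h| h^2 \|u\|_{\ell^1}$. The second step is to handle $|y_{0,g}(z) - y_{g,h}(z)|$. Here $y_{0,g} \in H^2(D)$ by Theorem~\ref{cor:solve_hh_general} (via \cite[Theorem~3.3]{[BeGaRo]}), with $\|y_{0,g}\|_{H^2(D)} \le C \|g\|_{H^\frac12(\Gamma_N)}$, so this is the smooth part of the problem and one expects the standard finite-element pointwise (or interior) estimate on $M_\kappa$ to give $|y_{0,g}(z) - y_{g,h}(z)| \le C_\kappa |\ln h| h^2 \|g\|_{H^\frac12(\Gamma_N)}$. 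I would either invoke the same machinery used to prove Proposition~\ref{lem:pointwise_green_disc_error} (which, judging from the $|\ln h|$ factor, already rests on weighted/interior $L^\infty$ estimates of Schatz--Wahlbin type), or cite the appropriate result from \cite{[BeGaRo]} applied to the data $g$. Combining the two contributions yields the claimed bound with the factor $\|u\|_{\ell^1} + \|g\|_{H^\frac12(\Gamma_N)}$.

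The main obstacle is the justification that the countable sum may be pulled out of the point evaluation and that Proposition~\ref{lem:pointwise_green_disc_error} applies with a constant that is genuinely uniform over all source locations in $D_\kappa$ — this is exactly what the proposition asserts, so the work is really in checking convergence of $\sum_\ell \alpha_\ell G^{x_\ell}_h$ and $\sum_\ell \alpha_\ell G^{x_\ell}$ at the point $z$, which follows from $\|G^{x_\ell}_h\|_{L^2(D)} \le C(h)$ combined with the pointwise bound for the discrete functions, and from $\|G^{x_\ell}\|_{H^2(M_\kappa)} \le C_\kappa$ with the embedding $H^2(M_\kappa) \hookrightarrow C(\overline{M_\kappa})$ for the continuous ones. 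The secondary obstacle is making sure the pointwise estimate for the Neumann part does not lose the $h^2 |\ln h|$ rate near $\Gamma_N$; but since $z \in M_\kappa$ has positive distance $\kappa$ to $\Gamma$, $z$ lies in the interior region where the optimal rate holds, so this is not a real difficulty. Everything else is routine bookkeeping with triangle inequalities.
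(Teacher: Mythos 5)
Your proposal is correct and follows essentially the same route as the paper's proof: decompose $y_u-y_{u,h}$ into the series of Green's-function differences plus the Neumann-data part, bound the first term by $\|u\|_{\ell^1}\sup_{x\in D_\kappa}|G^x(z)-G^x_h(z)|$ via Proposition~\ref{lem:pointwise_green_disc_error}, and handle the smooth part $|y_g(z)-y_{g,h}(z)|$ by citing the pointwise estimate of \cite[Theorem 4.2]{[BeGaRo]}. The only detail the paper adds beyond your sketch is a check that the minimal mesh size $h_0$ in that cited theorem can be chosen independently of $g$.
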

\begin{proof}
We are allowed to evaluate $y_u$ and $y_{u, h}$ at $z$ because $y_u \in H^2(M_\kappa)\subseteq C(\bar M_\kappa)$ and $y_{u, h} \in V_h$. For the error estimate observe, with $y_g$ solving \eqref{eq:hh_decomposition} and $y_{g,h}$ being the solution to \eqref{eq:discreteHH} for $\tau(u)=0$,
\begin{align*}
|y_u(z) - y_{u, h}(z)| & \leq \sum_{\ell=1}^\infty |\alpha_\ell| \lvert G^{x_\ell}(z) - G^{x_\ell}_h(z) \rvert   + |y_g(z) - y_{g,h}(z)| \\
& \leq \|u\|_{\ell^1} \sup_{x \in D_\kappa} \lvert G^x(z) - G^x_h(z) \rvert + |y_g(z) - y_{g,h}(z)|.
\end{align*}
We apply Lemma \ref{lem:pointwise_green_disc_error} to bound the term under the supremum uniformly in $x$ and obtain the desired rate of $|\ln h| h^2$ for the first term. \cite[Theorem 4.2]{[BeGaRo]} gives the estimate for the second term. It is stated in such a way that the minimal mesh size may depend on $g$, but checking its proof and the reference to \cite[Theorem 5.1]{SchatzWahlbinMaxNormEst} in the proof of \cite[Theorem 4.2]{[BeGaRo]} shows that it is independent of $g$.
\end{proof}

%

\begin{corollary} \label{cor:discrete_continuity}
There exist $h_0, C_\kappa > 0$ such that the following holds:
Let $z \in M_\kappa$. Then for $h\in (0,h_0]$ and $g \in H^{\frac{1}{2}}(\Gamma_N)$ the solution mapping $y_{(\cdot), h}(z): \ell^1_\kappa \rightarrow \mathbb{C}$ is continuous. More precisely there holds
\begin{align}\label{ineq:ContiOfDiscSol}
\begin{matrix}
\lvert
y_{u, h}(z) - y_{v, h}(z)
\rvert
\leq
C_\kappa \left(
\|u\|_{\ell^1}+1
\right)\|u - v\|_{\ell^1}
\end{matrix}
\end{align}
for $u, v \in\ell^1_\kappa$ with $\|u - v \|_{\ell^1} < \frac{1}{2} \kappa$.
\end{corollary}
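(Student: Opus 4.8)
The plan is to mirror the proof of Theorem~\ref{cor:continuity_general_hh} at the discrete level, using the $L^2$–stability of the discrete solution operator (Proposition~\ref{prop:discrete_regular_problem}) together with the properties of $\delta_{x,h}$ from Proposition~\ref{prop:deltaxh}. Write $u=(\alpha_\ell,x_\ell)_{\ell=1}^\infty$ and $v=(\beta_\ell,z_\ell)_{\ell=1}^\infty$. By Theorem~\ref{thm:discretized_hh_has_solution} one has $y_{u,h}=\sum_{\ell=1}^\infty\alpha_\ell G^{x_\ell}_h+y_{g,h}$ and $y_{v,h}=\sum_{\ell=1}^\infty\beta_\ell G^{z_\ell}_h+y_{g,h}$ with the \emph{same} term $y_{g,h}$, so the Neumann contribution cancels in the difference; moreover both series converge absolutely in $L^2(D)$, hence, by equivalence of norms on the finite–dimensional space $V_h$, absolutely in $C(\bar D)$ and in particular at the point $z$. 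This justifies the rearrangement
\begin{equation*}
y_{u,h}(z)-y_{v,h}(z)=\sum_{\ell=1}^\infty\Bigl[(\alpha_\ell-\beta_\ell)\,G^{x_\ell}_h(z)+\beta_\ell\bigl(G^{x_\ell}_h(z)-G^{z_\ell}_h(z)\bigr)\Bigr].
\end{equation*}

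Next I would prove the two pointwise estimates $|G^{x}_h(z)|\le C$ and $|G^{x}_h(z)-G^{y}_h(z)|\le C\|x-y\|$ for $x,y\in D_\kappa$ and $z\in\bar D$. Since $G^x_h$ is the discrete solution of \eqref{eq:prop:discrete_regular_problem} with right-hand side $\delta_{x,h}$ and zero Neumann datum, Proposition~\ref{prop:discrete_regular_problem} gives $\|G^x_h\|_{L^2(D)}\le C\|\delta_{x,h}\|_{L^2(D)}\le C$, the last bound (and its independence of $x$) coming from Proposition~\ref{prop:deltaxh}; equivalence of norms on $V_h$ then yields $|G^x_h(z)|\le\|G^x_h\|_{L^\infty(D)}\le C$. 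Likewise $G^x_h-G^y_h$ solves \eqref{eq:prop:discrete_regular_problem} with right-hand side $\delta_{x,h}-\delta_{y,h}$, so by Proposition~\ref{prop:discrete_regular_problem} and the Lipschitz continuity of $x\mapsto\delta_{x,h}$ in Proposition~\ref{prop:deltaxh} we get $\|G^x_h-G^y_h\|_{L^2(D)}\le C\|\delta_{x,h}-\delta_{y,h}\|_{L^2(D)}\le C\|x-y\|$, and again equivalence of norms turns this into the claimed bound on $|G^x_h(z)-G^y_h(z)|$.

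It then remains to run the same $\ell^1$–bookkeeping as at the end of the proof of Theorem~\ref{cor:continuity_general_hh}: the hypothesis $\|u-v\|_{\ell^1}<\tfrac12\kappa$ gives $|\beta_\ell|\le|\alpha_\ell|+|\alpha_\ell-\beta_\ell|<|\alpha_\ell|+\tfrac12\kappa$ for all $\ell$, whence
\begin{align*}
|y_{u,h}(z)-y_{v,h}(z)|
&\le C\sum_{\ell=1}^\infty\Bigl[|\alpha_\ell-\beta_\ell|+\bigl(|\alpha_\ell|+\tfrac12\kappa\bigr)\|x_\ell-z_\ell\|\Bigr]\\
&\le C\max\Bigl\{1,\ \sup_\ell|\alpha_\ell|+\tfrac12\kappa\Bigr\}\sum_{\ell=1}^\infty\bigl(|\alpha_\ell-\beta_\ell|+\|x_\ell-z_\ell\|\bigr)\le C_\kappa\bigl(\|u\|_{\ell^1}+1\bigr)\|u-v\|_{\ell^1},
\end{align*}
using $\sup_\ell|\alpha_\ell|\le\|u\|_{\ell^1}$. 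The only thing to watch is that all constants are independent of $u$, $v$ and $g$ (they depend on $\kappa$, and on $h$ through $\delta_{x,h}$); since $h$ is fixed in the statement this already delivers the asserted continuity of $y_{(\cdot),h}(z)$. There is thus no serious obstacle for the statement as it is used; the one delicate point only appears if one insists on a constant independent of $h$, in which case the $\delta_{x,h}$–route must be replaced — e.g.\ by combining the pointwise error bound of Proposition~\ref{lem:pointwise_green_disc_error} with the continuous estimates of Proposition~\ref{prop:Greensfct_exist_regul}, a reciprocity identity expressing $G^x_h(z)$ through the discrete adjoint Green's function, and a local inverse estimate to control its gradient near $D_\kappa$.
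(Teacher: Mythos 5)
Your argument is essentially the paper's proof: the same splitting $y_{u,h}-y_{v,h}=\sum_{\ell}\bigl[(\alpha_\ell-\beta_\ell)G^{x_\ell}_h+\beta_\ell(G^{x_\ell}_h-G^{z_\ell}_h)\bigr]$ with the common term $y_{g,h}$ cancelling, the same use of Proposition~\ref{prop:discrete_regular_problem} together with Proposition~\ref{prop:deltaxh} to obtain $\|G^{x}_h-G^{y}_h\|_{L^2(D)}\le C\|\delta_{x,h}-\delta_{y,h}\|_{L^2(D)}\le C\|x-y\|$, and the same $\ell^1$ bookkeeping as at the end of Theorem~\ref{cor:continuity_general_hh}. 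The one point where you diverge is the bound $|G^x_h(z)|\le C$: you derive it from $\|G^x_h\|_{L^2(D)}\le C\|\delta_{x,h}\|_{L^2(D)}$ plus the inverse estimate on $V_h$, and both of those constants degenerate as $h\to 0$, so this route only gives a constant depending on $h$. The paper instead invokes Theorem~\ref{thm:pointwise_disc_error_estimate}, i.e.\ $|G^x_h(z)|\le |G^x(z)|+C_\kappa|\ln h|h^2\le C_\kappa$ using Proposition~\ref{prop:Greensfct_exist_regul}, which is uniform in $h\in(0,h_0]$ and uniform in $x$; since the corollary asserts a single $C_\kappa$ valid for all such $h$, that is the preferable source for this term. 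You are right to flag the residual $h$-dependence coming from the Lipschitz constant of $x\mapsto\delta_{x,h}$ and from passing from the $L^2$ difference bound to a pointwise one; the paper's own proof goes through exactly the same $\delta_{x,h}$-based step for the position-Lipschitz part, so on that point your proposal is on par with the published argument, and your closing remark about how one could restore $h$-uniformity is a sensible (if unexecuted) sketch rather than a required ingredient.
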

\begin{proof}
Let $h_0>0$ be from Theorem \ref{thm:discretized_hh_has_solution} and $h\in (0, h_0]$. For $x_1, x_2 \in D_\kappa$ let $G^{x_1}_h - G^{x_2}_h \in V_h$ be the discrete solution of the Helmholtz equation with right hand side $f = \delta_{x_1,h} - \delta_{x_2,h}$ and Neumann data $g = 0$. We use Proposition \ref{prop:discrete_regular_problem} and Proposition \ref{prop:deltaxh} to get
\begin{align*}
\lVert G^{x_1}_h - G^{x_2}_h \rVert_{L^2(D)} \leq C \|\delta_{x_1, h} - \delta_{x_2, h}\|_{L^2(D)} \leq C\|x_1 - x_2 \|.
\end{align*}
with $C$ independent of $x_1$ and $x_2$.

Theorem \ref{thm:pointwise_disc_error_estimate} shows that $|G^{x_1}_h(z)| \leq C$ with $C$ independent of $x_1$. Hence a similar computation as in the proof of Theorem \ref{cor:continuity_general_hh} replacing the Greens function with its discrete counterpart and replacing $\| \cdot \|_{H^2(M_\kappa)}$ with $|\cdot (z)|$ shows the result.  
\end{proof}

\section{The Inverse Problem} \label{sec:bayesian_approach}

Up to this point all definitions and all the analysis was carried out in a deterministic setting. Now let us recall problem \eqref{eq:problem}. Our aim is to analyse the following inverse problem with a Bayesian approach:
\begin{align*}
\begin{matrix}
y = G(u)+\eta.
\end{matrix}
\end{align*}
The observed data $y \subseteq \mathbb{C}^m$ is fixed. We also recall the definition of the observation operator, namely
\begin{align*}
\begin{matrix}
G: \ell^1_\kappa &\rightarrow & \mathbb{C}^m, \\
u := (\alpha_\ell, x_\ell)_{\ell = 1}^\infty & \mapsto & \left( y_u(z_j) \right)_{j=1}^m.
\end{matrix}
\end{align*}
Mind that $G$ is non-linear as $\tau$ is non-linear. $G$ is well defined by Corollary \ref{cor:solve_hh_general} and the embedding $H^2(M_\kappa)\subseteq C(\bar M_\kappa)$.
In the Bayesian setting we assume that $u$ follows some prior distribution $\mu^0$, which we will introduce in Section \ref{sec:prior}. We then want to incorporate the knowledge from the measured data $y\in\mathbb{C}^m$ to obtain the posterior distribution $\mu^y$. Let us assume that the measurement noise $\eta$ is multivariate complex normal distributed such that $\eta$ has a probability density proportional to
\begin{align*}
\rho_\eta(z) \, \propto \, \exp\left(- \frac{1}{2}\|z\|^2_\Sigma \right), \quad \forall z \in \mathbb{C}^m.
\end{align*}

Here, $\Sigma\in \mathbb{C}^{2m\times 2m}$ is a Hermitian, positive definite complex matrix of the form 
\begin{align*}
\Sigma=\begin{pmatrix}
\Gamma & C \\ \bar{C} & \bar{\Gamma}
\end{pmatrix}
\qquad \text{ and } \qquad 
\| z\|^2_\Sigma := (\bar z^T, z^T) \Sigma^{-1} \begin{pmatrix}
z \\
\bar z
\end{pmatrix}
\end{align*}
We denote such a random variable with mean $m$, covariance $\Gamma$ and relation $C$ as $N(m, \Gamma, C)$.

In Section \ref{sec:posterior} we will introduce the posterior distribution as
\begin{equation*} 
\mu^y(F) = \frac{1}{\Lambda} \int_F \exp\left(- \frac{1}{2}\|y - G(u)\|^2_\Sigma \right) \, d\mu^0(u)
\end{equation*}
for $F \in \mathcal{F}$, where $\mathcal{F}$ is a Sigma algebra on $\ell^1_\kappa$. $\Lambda$ is a normalization constant.

The term $\exp(- \frac{1}{2}\|y - G(u)\|^2_\Sigma)$ can be viewed as penalization for deviating too far away from the observed data. Thus the posterior combines both prior knowledge of the solution and the measurements favouring those $u \in \ell^1_\kappa$ which closely match the observation.\\

Before we continue we recall some basic definitions and tools.
If two measures $\mu^1$ and $\mu^2$ have a Radon-Nikodym derivative with respect to  a third measure $\nu$, then we are able to define a metric between them,
see \cite[Section 6.7]{[Stu]}. 

\begin{definition}[Hellinger Distance]
Let $\mu^1, \mu^2, \nu$ be measures on $X$ such that $\mu^1, \mu^2$ have a Radon-Nikodym Derivative with respect to  $\nu$. Then the Hellinger distance between $\mu^1$ and $\mu^2$ is defined as
\begin{equation}\label{def:HellDist}
d_{\text{Hell}}(\mu^1, \mu^2) = \left(\int_X \frac{1}{2}\left(\left(\frac{d\mu^1}{d\nu}\right)^{\frac{1}{2}} - \left(\frac{d\mu^2}{d\nu}\right)^{\frac{1}{2}}\right)^2 d\nu \right)^{\frac{1}{2}}.
\end{equation}

\end{definition}
The Hellinger distance tells us how well two measures agree.

\subsection{The Prior} \label{sec:prior}
Recall that the Bayesian approach needs prior knowledge in form of a probability distribution which fits to problem \eqref{eq:problem}. We want to recover the number of sound sources, their amplitudes, and their positions in $D_\kappa$. Let $(\ell^1_\kappa, \mathcal{F})$ be the measurable set where $\mathcal{F}$ is the Borel $\sigma$-algebra associated with the open sets generated by the norm $\| \cdot \|_{\ell^1}$ on the open set $\ell^1_\kappa$. For $k \in \mathbb{N}_0$ we define the sets
\begin{equation*}
\ell^1_{k, \kappa} := \{ (\alpha_\ell, x_\ell)_{\ell = 1}^{\infty} \in \ell^1_\kappa \, | \, \alpha_\ell = 0, x_\ell = 0, \quad \text{for all } \ell > k\} \subseteq \ell^1.
\end{equation*}
Using this notation we are able to construct a specific probability measure on $(\ell^1_\kappa, \mathcal{F})$.
\begin{theorem} \label{thm:prior_theorem}
Let $q$ be a probability mass function on $\mathbb{N}_0$ and for every $k \in \mathbb{N}_0$ let $\mu^0_k$ be a probability measure on $(\ell^1_\kappa, \mathcal{F})$ such that $\mu^0_k(\ell^1_{k, \kappa}) = 1$. Then $\mu^0$ defined by
\begin{equation*}
\mu^0(F) = \sum_{k \in \mathbb{N}_0} q(k) \mu^0_k(F), \quad \text{ for all } F \in \mathcal{F},
\end{equation*}
is a well-defined probability measure on $(\ell^1_\kappa, \mathcal{F})$. Furthermore samples from $\mu^0$ have finitely many non-zero entries, that is 
\begin{equation} \label{eq:prior_measure_finitely_many}
\mu^0 (\{u \in \ell^1_\kappa \, | \, u \in \ell^1_{k, \kappa} \text{  for some } k \}) = 1.
\end{equation}
\end{theorem}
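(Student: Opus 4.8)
The plan is to verify the three defining properties of a probability measure — non-negativity, $\sigma$-additivity, and normalization to $1$ — for the countable convex combination $\mu^0 = \sum_{k\in\mathbb{N}_0} q(k)\mu^0_k$, and then to deduce the concentration statement \eqref{eq:prior_measure_finitely_many} from the hypothesis $\mu^0_k(\ell^1_{k,\kappa}) = 1$.

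\emph{Well-definedness and measure axioms.} First I would note that for every $F\in\mathcal{F}$ each summand $q(k)\mu^0_k(F)$ is a nonnegative real number bounded by $q(k)$, so the series $\sum_{k}q(k)\mu^0_k(F)$ converges (absolutely) and lies in $[0,1]$; in particular $\mu^0(F)$ is well-defined and nonnegative, and $\mu^0(\emptyset) = \sum_k q(k)\cdot 0 = 0$. For $\sigma$-additivity, let $(F_n)_{n\in\mathbb{N}}$ be pairwise disjoint sets in $\mathcal{F}$ with union $F$. Since each $\mu^0_k$ is a measure, $\mu^0_k(F) = \sum_n \mu^0_k(F_n)$. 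Then
\begin{align*}
\mu^0(F) = \sum_{k} q(k)\sum_{n}\mu^0_k(F_n) = \sum_{n}\sum_{k} q(k)\mu^0_k(F_n) = \sum_{n}\mu^0(F_n),
\end{align*}
where the interchange of the two sums is justified because all terms are nonnegative (Tonelli's theorem for series, or the Fubini theorem for counting measures). Finally, normalization: $\mu^0(\ell^1_\kappa) = \sum_k q(k)\mu^0_k(\ell^1_\kappa) = \sum_k q(k)\cdot 1 = 1$, since $q$ is a probability mass function on $\mathbb{N}_0$. Hence $\mu^0$ is a probability measure on $(\ell^1_\kappa,\mathcal{F})$.

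\emph{Concentration on finitely supported sequences.} Set $A := \{u\in\ell^1_\kappa \,|\, u\in\ell^1_{k,\kappa}\text{ for some }k\} = \bigcup_{k\in\mathbb{N}_0}\ell^1_{k,\kappa}$. Each $\ell^1_{k,\kappa}$ is measurable (it is the preimage under the coordinate projections of a closed condition, hence lies in the Borel $\sigma$-algebra $\mathcal{F}$), so $A\in\mathcal{F}$. By monotonicity, for every $k$ we have $\mu^0_k(A)\geq \mu^0_k(\ell^1_{k,\kappa}) = 1$, hence $\mu^0_k(A) = 1$. Therefore
\begin{align*}
\mu^0(A) = \sum_{k}q(k)\mu^0_k(A) = \sum_{k}q(k) = 1,
\end{align*}
which is exactly \eqref{eq:prior_measure_finitely_many}.

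\emph{Main obstacle.} There is no deep obstacle here; the only technical point deserving care is the justification of the interchange of summation order in the $\sigma$-additivity argument, which is clean because of nonnegativity, and the measurability of the sets $\ell^1_{k,\kappa}$ and of $A$, which should be remarked on explicitly since the $\sigma$-algebra $\mathcal{F}$ was defined as the Borel $\sigma$-algebra of the $\|\cdot\|_{\ell^1}$-topology rather than a product $\sigma$-algebra. One should confirm that the coordinate maps $u\mapsto\alpha_\ell$ and $u\mapsto x_\ell$ are continuous (indeed $1$-Lipschitz) with respect to $\|\cdot\|_{\ell^1}$, so that $\ell^1_{k,\kappa} = \bigcap_{\ell>k}\{\alpha_\ell = 0,\ x_\ell = 0\}$ is a countable intersection of closed, hence measurable, sets.
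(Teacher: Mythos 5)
Your proof is correct and is precisely the ``standard arguments'' that the paper invokes without writing them out: a countable convex combination of probability measures is a probability measure (with the sum interchange justified by nonnegativity), and the concentration statement follows from $\mu^0_k(\ell^1_{k,\kappa})=1$ together with the measurability of $\bigcup_k \ell^1_{k,\kappa}$. Your explicit remark that each $\ell^1_{k,\kappa}$ is closed in the $\|\cdot\|_{\ell^1}$-topology (via the $1$-Lipschitz coordinate projections) is a worthwhile detail that the paper leaves implicit.
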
 
We can verify the theorem by standard arguments.
We now state a proposition from which we can deduce whether $u \sim \mu^0$ has moments, which will later be important to show that the posterior is well-defined.
\begin{proposition}
Assume that $\mu^0$ is constructed as in Theorem \ref{thm:prior_theorem} and that for any $k \in \mathbb{N}$ the measure $\mu^0_k$ has $p$-th moment. Let $\operatorname{id}$ denote the identity on $\ell^1_\kappa$. Then there holds
\begin{equation*}
\mathbb{E}_{\mu^0}[\|\operatorname{id}\|^p_{\ell^1}] = \sum_{k \in \mathbb{N}_0} q(k) \mathbb{E}_{\mu^0_k}[\|\operatorname{id}\|^p_{\ell^1}],
\end{equation*}
and if the last series is finite, then $\mu^0$ has $p$-th moment.
\end{proposition}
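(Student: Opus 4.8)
The plan is to compute the expectation by unfolding the definition of $\mu^0$ as a countable convex combination and swapping the order of integration and summation, which is justified because every term is non-negative. First I would write, using the monotone convergence theorem (or Tonelli for the counting measure on $\mathbb{N}_0$ paired with $\ell^1_\kappa$),
\begin{equation*}
\mathbb{E}_{\mu^0}[\|\operatorname{id}\|^p_{\ell^1}] = \int_{\ell^1_\kappa} \|u\|^p_{\ell^1} \, d\mu^0(u) = \int_{\ell^1_\kappa} \|u\|^p_{\ell^1} \, d\Bigl(\sum_{k \in \mathbb{N}_0} q(k)\mu^0_k\Bigr)(u) = \sum_{k \in \mathbb{N}_0} q(k) \int_{\ell^1_\kappa} \|u\|^p_{\ell^1}\, d\mu^0_k(u),
\end{equation*}
where the interchange is legitimate since $u \mapsto \|u\|^p_{\ell^1}$ is non-negative and measurable (it is continuous on $\ell^1_\kappa$, hence Borel measurable), and $q(k) \geq 0$. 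This immediately yields the claimed identity with the right-hand side being $\sum_{k} q(k)\, \mathbb{E}_{\mu^0_k}[\|\operatorname{id}\|^p_{\ell^1}]$.

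Next I would address the second assertion: if the series $\sum_k q(k)\,\mathbb{E}_{\mu^0_k}[\|\operatorname{id}\|^p_{\ell^1}]$ is finite, then $\mu^0$ has $p$-th moment. By the identity just established, finiteness of that series is exactly finiteness of $\mathbb{E}_{\mu^0}[\|\operatorname{id}\|^p_{\ell^1}] = \int_{\ell^1_\kappa}\|u\|^p_{\ell^1}\,d\mu^0(u)$, which is the definition of $\mu^0$ having a $p$-th moment. So this part is essentially a tautology once the interchange has been carried out, and there is nothing further to prove.

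There is no serious obstacle here; the only point requiring a word of care is the justification for exchanging the integral with the infinite sum defining $\mu^0$. Since the summands $q(k)\mu^0_k$ are finite non-negative measures and the integrand is non-negative, Tonelli's theorem (equivalently, monotone convergence applied to the partial sums $\sum_{k=0}^{K} q(k)\mu^0_k$, which increase to $\mu^0$ as measures) permits the swap unconditionally, whether or not the resulting series converges. One should also note that $\mu^0$ is a genuine probability measure by Theorem \ref{thm:prior_theorem}, so the expectation is well-defined as an element of $[0,\infty]$. This is a routine verification, and the proposition follows.
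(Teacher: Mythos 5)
Your argument is correct and is precisely the ``standard argument'' the paper invokes without writing out: decompose $\mu^0$ as the countable convex combination $\sum_k q(k)\mu^0_k$, interchange sum and integral via monotone convergence/Tonelli for the non-negative measurable integrand $\|\cdot\|_{\ell^1}^p$, and observe that the second assertion is immediate from the resulting identity. Nothing further is needed.
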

Again, we can verify the proposition by standard arguments.

The motivation to choose such a measure follows from property \eqref{eq:prior_measure_finitely_many}. This ensures that the forcing term for \eqref{eq:helmholtzeq} consists $\mu^0$-almost surely of a finite number of Diracs. A practical approach to construct such a measure is to express $u$ in terms of random variables $k, \alpha, x$ such that the Diracs on the right hand side of the Helmholtz equation are given by
\begin{equation} \label{eq:forcing_term}
\tau(u) = \sum_{\ell = 1}^k \alpha^k_\ell \delta_{x^k_\ell}.
\end{equation}
The function $q$ is the probability mass function of the random variable $k$, and given $k$ we are able to define the probability measure $\mu^0_k$ for the intensities $(\alpha^k_1,...,\alpha^k_k) \in \mathbb{C}^k$ and positions $(x^k_1,...,x^k_k) \in D^k_\kappa$. 

Another reason is that we essentially consider a prior measure on (a subspace of) the Banach space of finite Radon-Measures $M(D_\kappa)$, respectively a random variable $\tau(u)$ with values in $M(D_\kappa)$. However, $M(D_\kappa)$ is a non-separable Banach space and thus does not posses a countable basis or a natural scalar product. Therefore standard techniques based on orthogonal decomposition like the Karhunen-Lo\`eve expansion (KLE) for random fields in separable Hilbert spaces cannot be applied. Nevertheless a prior satisfying Theorem \ref{thm:prior_theorem} with $q > 0$ and $\operatorname{supp}(\mu^0_k) = \overline{\ell^1_{k, \kappa}} \cong \mathbb{C}^k \times \overline{D}^k_\kappa$ satisfies
\begin{align*}
\begin{matrix}
\operatorname{Image}(\tau(u)) = \left\lbrace 
\sum\limits_{\ell=1}^k \alpha_{\ell} \delta_{x_\ell} \, | \, \quad k \in \mathbb{N}_0,\text{ }(\alpha_\ell)_{\ell = 1}^{k}\in \mathbb{C}^k,\text{ }(x_\ell)_{\ell=1}^k \in D_\kappa^k
\right\rbrace
\end{matrix}.
\end{align*}
Elements of this set are dense in $M(D_\kappa)$ w.r.t. the $w^*-$topology of $M(D_\kappa)$ (see for example \cite[Remark 2.1]{Bredies2013}). Therefore the random field $\tau(u)$ is a good substitute for a classical KLE on the non-separable Banach space $M(D_{\kappa})$.

Another desirable property of $\tau(u)$ is that samples can be easily generated and their support is sparse in $D_{\kappa}$. As we already know from the field of optimal control, the use of sparse controls from the measure space $M(D_\kappa)$ (i.e. the support of controls is a Lebesgue zero set, see \cite{CaSEMA}) is often desired in application and practice. For example, in our inverse problem a point source is much more realistic than a sound source represented by a regular function, which has a support that is not a Lebesgue zero set.

\subsection{Properties of the observation operator and potential} \label{sec:observation_potential}

Now we investigate the non-linear observation operator $G$. It is important to see that the Helmholtz equation only influences the posterior through the observation operator $G$. Therefore, we derive suitable properties of $G$ from the underlying PDE model.

\begin{proposition} \label{prop:G_measurable}
The observation operator $G$ is $\mu^0$-measurable and satisfies
\begin{equation*}
\|G(u) \|_\Sigma \le C \left(\|u\|_{\ell^1} +  \|g\|_{H^{\frac{1}{2}}(\Gamma_N)} \right).
\end{equation*}
If the prior $\mu^0$ has $p$-th moment then $G \in L^p(\ell^1_\kappa; \mu^0)$.
\end{proposition}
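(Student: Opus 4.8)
The plan is to assemble three facts already available in the paper: the a priori bound $\|y_u\|_{H^2(M_\kappa)}\le C_\kappa(\|u\|_{\ell^1}+\|g\|_{H^{1/2}(\Gamma_N)})$ from Theorem~\ref{cor:solve_hh_general}, the Sobolev embedding $H^2(M_\kappa)\hookrightarrow C(\bar M_\kappa)$ from Theorem~\ref{thm:sobolevembedding}, and the observation that, since $\Sigma$ (hence $\Sigma^{-1}$) is Hermitian positive definite, $z\mapsto\|z\|_\Sigma$ defines a norm on $\mathbb{C}^m$ equivalent to the Euclidean one.

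For the norm estimate I would first recall that the measurement points $z_1,\dots,z_m$ are assumed to lie in $M_\kappa$, so Theorem~\ref{cor:solve_hh_general} applies and gives $y_u\in H^2(M_\kappa)$ together with the displayed bound. The Sobolev embedding then yields $|y_u(z_j)|\le\|y_u\|_{C(\bar M_\kappa)}\le C\|y_u\|_{H^2(M_\kappa)}$ for every $j$, so that the Euclidean norm of $G(u)=(y_u(z_j))_{j=1}^m$ satisfies $\|G(u)\|\le C\sqrt{m}\,\|y_u\|_{H^2(M_\kappa)}$. Passing to $\|\cdot\|_\Sigma$ via the norm equivalence and collecting all constants into a single $C$ produces the asserted inequality.

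For measurability I would use that $u\mapsto y_u$ is locally Lipschitz, hence continuous, from $\ell^1_\kappa$ into $H^2(M_\kappa)$ by Theorem~\ref{cor:solve_hh_general}, and that each point evaluation $f\mapsto f(z_j)$ is a bounded linear functional on $H^2(M_\kappa)$ (again by Theorem~\ref{thm:sobolevembedding}); composing, $G$ is continuous, a fortiori Borel measurable, and since $\mathcal{F}$ is the Borel $\sigma$-algebra on $\ell^1_\kappa$ it is in particular $\mu^0$-measurable. Finally, for the $L^p$ claim I would integrate the $p$-th power of the norm bound, use $(a+b)^p\le 2^{p-1}(a^p+b^p)$ and that $\mu^0$ is a probability measure to obtain
\[
\int_{\ell^1_\kappa}\|G(u)\|_\Sigma^p\,d\mu^0(u)\le 2^{p-1}C^p\Big(\mathbb{E}_{\mu^0}\big[\|\operatorname{id}\|_{\ell^1}^p\big]+\|g\|_{H^{1/2}(\Gamma_N)}^p\Big),
\]
which is finite whenever $\mu^0$ has $p$-th moment, so $G\in L^p(\ell^1_\kappa;\mu^0)$.

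The argument is essentially bookkeeping with earlier results; the only spots needing a little care are verifying the equivalence of $\|\cdot\|_\Sigma$ with the Euclidean norm on $\mathbb{C}^m$ and the reminder that the $z_j$ must lie in $M_\kappa$ for Theorem~\ref{cor:solve_hh_general} to be applicable. I do not foresee a genuine obstacle.
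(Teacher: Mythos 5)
Your argument is correct and follows essentially the same route as the paper's own (very terse) proof: continuity of $u\mapsto y_u$ from Theorem~\ref{cor:continuity_general_hh} applied at each $z_j\in M_\kappa$ gives Borel measurability, the $H^2(M_\kappa)$ bound combined with the embedding into $C(\bar M_\kappa)$ gives the norm estimate, and integrating gives the $L^p$ claim. Your extra care about the equivalence of $\|\cdot\|_\Sigma$ with the Euclidean norm and the location of the $z_j$ just makes explicit what the paper leaves implicit.
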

\begin{proof}
Because $\mu^0$ is a measure on $(\ell^1_\kappa, \mathcal{F})$ and $\mathcal{F}$ is the Borel $\sigma$-Algebra we can apply Theorem \ref{cor:continuity_general_hh} for every point $z_j \in M_\kappa$ for $j = 1,...,m$ to show that $G : \ell^1_\kappa \rightarrow \mathbb{C}^m$ is continuous and in particular $\mu^0$-measurable. Again Theorem \ref{cor:solve_hh_general} applied to every observation point $z_j \in M_\kappa$ yields 
\begin{equation*}
\|G(u) \|_\Sigma \le C \left(\|u\|_{\ell^1} +  \|g\|_{H^{\frac{1}{2}}(\Gamma_N)} \right),
\end{equation*}
which shows the bound and $G \in L^p(\ell^1_\kappa; \mu^0)$ if $\mu^0$ has $p$-th moment.
\end{proof}

\begin{definition}
We define the potential $\Psi$ as follows:
\begin{equation}\label{Onepotential}
\Psi : \ell^1_\kappa \times \mathbb{C}^m \rightarrow \mathbb{R}, \quad \Psi(u, y) = \frac{1}{2} \|y - G(u) \|^2_\Sigma.
\end{equation}
\end{definition}

%

Given the regularity of the observation operator $G$ we are able to deduce several essential properties of the potential $\Psi$.
\begin{lemma} \label{lem:Potential}
Assume that $\mu^0$ has second moment. Then the potential $\Psi$ and the prior measure $\mu^0$ satisfy:
\begin{enumerate}
\item[(i)] $\Psi \ge 0$.
\item[(ii)] There exists a $\mu^0$-measurable set $X \subseteq \ell^1_\kappa$ and constants $K,C>0$ such that $\mu^0(X) > 0$ and
\begin{equation*}
\Psi(u, y) \le K + C \|y\|^2_\Sigma, \quad \text{for all } u \in X, \quad y \in \mathbb{C}^m.
\end{equation*}
\item[(iii)] For every $y \in \mathbb{C}^m$ the map $\Psi( \cdot, y) : \ell^1_\kappa \rightarrow \mathbb{R}$ is $\mu^0$-measurable. 
\item[(iv)] If $\mu^0$ has $p$-th moment for $p \ge 2$ then for every $y \in \mathbb{C}^m$ we have that $\Psi(\cdot, y) \in L^{\frac{1}{2}p}(\ell^1_\kappa; \mu^0)$.
\item[(v)] For every $r > 0$ there exists a $C > 0$ such that
\begin{equation*}
|\Psi(u, y_1) - \Psi(u, y_2)| \le C \left(\|u\|_{\ell^1} + 1\right) \| y_1 - y_2 \|_{\Sigma}
\end{equation*}
for all $u \in \ell^1_\kappa$ and for all $y_1, y_2 \in B_r(0)$.
\end{enumerate}
\end{lemma}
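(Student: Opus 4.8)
The plan is to reduce every item to the linear growth bound and measurability of $G$ recorded in Proposition~\ref{prop:G_measurable}, combined with elementary properties of the quadratic form $\|\cdot\|_\Sigma$. Since $\Sigma$ is Hermitian and positive definite, $\|\cdot\|_\Sigma$ is a norm on $\mathbb{C}^m$ (viewed as a real vector space) equivalent to the Euclidean norm, so it satisfies the triangle and reverse triangle inequalities, the bound $\|v+w\|_\Sigma^2 \le 2\|v\|_\Sigma^2 + 2\|w\|_\Sigma^2$, and the factorisation $|a^2-b^2| = |a-b|\,(a+b)$ for $a,b\ge 0$. With this, (i) is immediate from positive definiteness of $\Sigma^{-1}$, and (iii) follows because $\Psi(\cdot,y)$ is the composition of the $\mu^0$-measurable (indeed continuous) map $G$ with the continuous map $\mathbb{C}^m \ni v \mapsto \tfrac12\|y-v\|_\Sigma^2$.

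For (ii) I would first record the pointwise estimate, valid for all $u\in\ell^1_\kappa$ and $y\in\mathbb{C}^m$,
\[
\Psi(u,y) = \tfrac12\|y-G(u)\|_\Sigma^2 \le \|y\|_\Sigma^2 + \|G(u)\|_\Sigma^2 \le \|y\|_\Sigma^2 + C^2\bigl(\|u\|_{\ell^1} + \|g\|_{H^{1/2}(\Gamma_N)}\bigr)^2,
\]
using the growth bound of Proposition~\ref{prop:G_measurable}. Hence on the closed ball $X_R := \{u\in\ell^1_\kappa : \|u\|_{\ell^1}\le R\}$ one gets $\Psi(u,y)\le K_R + \|y\|_\Sigma^2$ with $K_R := C^2(R+\|g\|_{H^{1/2}(\Gamma_N)})^2$, i.e.\ the claimed inequality with constant $1$. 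Each $X_R$ lies in the Borel $\sigma$-algebra $\mathcal F$; since $\ell^1_\kappa = \bigcup_{R\in\mathbb N} X_R$ and $\mu^0$ is a probability measure, continuity from below gives an $R$ with $\mu^0(X_R)>\tfrac12>0$, and we take $X := X_R$.

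For (iv) I would raise the bound $\Psi(u,y)\le \|y\|_\Sigma^2 + \|G(u)\|_\Sigma^2$ to the power $p/2\ge 1$, use convexity ($(a+b)^{p/2}\le 2^{p/2-1}(a^{p/2}+b^{p/2})$) and the growth bound to get $\Psi(u,y)^{p/2} \le C(\|y\|_\Sigma^p + \|u\|_{\ell^1}^p + \|g\|_{H^{1/2}(\Gamma_N)}^p)$, and then integrate against $\mu^0$: the middle term is finite by the $p$-th moment hypothesis and the others are constants, so $\Psi(\cdot,y)\in L^{p/2}(\ell^1_\kappa;\mu^0)$. For (v), given $y_1,y_2\in B_r(0)$, set $a:=\|y_1-G(u)\|_\Sigma$ and $b:=\|y_2-G(u)\|_\Sigma$, so $|\Psi(u,y_1)-\Psi(u,y_2)| = \tfrac12|a-b|\,(a+b)$; the reverse triangle inequality gives $|a-b|\le \|y_1-y_2\|_\Sigma$, while $a+b \le \|y_1\|_\Sigma + \|y_2\|_\Sigma + 2\|G(u)\|_\Sigma \le 2c_\Sigma r + 2C(\|u\|_{\ell^1}+\|g\|_{H^{1/2}(\Gamma_N)})$ with $c_\Sigma$ the norm-equivalence constant. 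Absorbing all $u$-independent quantities into one constant $C_r$ (allowed since the $r$-dependent and $g$-dependent parts can be bounded by $C_r(\|u\|_{\ell^1}+1)$) yields $|\Psi(u,y_1)-\Psi(u,y_2)| \le C_r(\|u\|_{\ell^1}+1)\|y_1-y_2\|_\Sigma$.

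I do not expect a genuine obstacle here: the proof is a sequence of routine manipulations of the quadratic form $\|\cdot\|_\Sigma$ once Proposition~\ref{prop:G_measurable} is in hand. The only points needing a little care are verifying that the set $X$ in (ii) is measurable and has positive mass — handled by the covering $\ell^1_\kappa=\bigcup_R X_R$ and continuity from below — and, in (v), distinguishing the Euclidean ball $B_r(0)$ from balls in $\|\cdot\|_\Sigma$, which is harmless by equivalence of the two norms.
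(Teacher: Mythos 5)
Your proposal is correct, and for parts (i), (iii), (iv) and (v) it follows essentially the same route as the paper: (v) in particular is the same computation, since your difference-of-squares factorisation $|a^2-b^2|=|a-b|(a+b)$ is just the paper's telescoping identity $|\Psi(u,y_1)-\Psi(u,y_2)|\le \tfrac12\|y_1-y_2\|_\Sigma\|y_1+y_2-2G(u)\|_\Sigma$ written differently. The only genuine divergence is in (ii). The paper applies Markov's inequality to $\|\cdot\|_{\ell^1}^2$, obtaining $\mu^0(\{\|u\|_{\ell^1}^2\ge 2\mathbb{E}_{\mu^0}[\|\cdot\|_{\ell^1}^2]\})\le\tfrac12$ and taking $X$ to be the complement, which explicitly uses the second-moment hypothesis and produces the concrete bound $\Psi(u,y)\le \|y\|_\Sigma^2+C\mathbb{E}_{\mu^0}[\|\cdot\|_{\ell^1}^2]+C$. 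You instead exhaust $\ell^1_\kappa$ by the balls $X_R$ and invoke continuity from below of the probability measure to find one with $\mu^0(X_R)>\tfrac12$. Both arguments are valid; yours is marginally more general in that part (ii) then holds without any moment assumption on $\mu^0$, while the paper's gives an explicit, quantitative choice of $X$ tied to the second moment it assumes anyway. Either way the resulting set and constants serve identically in the lower bound for $\Lambda(y)$ in Theorem \ref{thm:WellDefinedPosterior}.
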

\begin{proof}
Property $(i)$ follows directly from the definition of the potential $\Psi$. To prove property $(ii)$, w.l.o.g. assume $\mathbb{E}_{\mu^0} [\|\cdot\|^2_{\ell^1}] > 0$ and apply Markov's inequality to $\|\cdot\|^2_{\ell^1}$ to obtain
\begin{align*}
\mu^0 (\{u \in \ell^1_\kappa : \|u\|^2_{\ell^1} \ge 2 \mathbb{E}_{\mu^0} [\|\cdot\|^2_{\ell^1}] \}) \leq \frac{\mathbb{E}_{\mu^0} [\|\cdot\|^2_{\ell^1}]}{2\mathbb{E}_{\mu^0} [\|\cdot\|^2_{\ell^1}]} = \frac{1}{2}.
\end{align*}
Thus we are able to define $X$ as the complementary set of events 
\begin{equation*}
X := \{u \in \ell^1_\kappa : \|u\|^2_{\ell^1} < 2 \mathbb{E}_{\mu^0} [\|\cdot\|^2_{\ell^1}] \}
\end{equation*}
and conclude $\mu^0(X) \ge \frac{1}{2}$. Using the triangle inequality, Young's inequality and Proposition \ref{prop:G_measurable}, we obtain for $u\in X$, $y\in\mathbb{C}^m$
\begin{equation*}
\Psi(u, y) \le \|y\|^2_\Sigma + \|G(u)\|^2_\Sigma \le \|y\|^2_\Sigma + C \|u\|_{\ell^1}^2 + C \leq \|y\|^2_\Sigma + C \mathbb{E}_{\mu^0} [\|\cdot\|^2_{\ell^1}] + C,
\end{equation*}		
which concludes $(ii)$. Property $(iii)$ follows because $G$ is $\mu^0$-measurable. Property $(iv)$ can be deduced from the definition of $\Psi$ because $G \in L^p(\ell^1_\kappa; \mu^0)$. Similarly $(v)$ is satisfied because
\begin{equation} \label{eq:proof:psiest}
\begin{split}
|\Psi(u, y_1) - \Psi(u, y_2)| & = \frac{1}{2}|(y_1 - G(u), y_1 - G(u))_\Sigma 
 - (y_1 - G(u), y_2 - G(u))_\Sigma \\
& \quad + (y_1 - G(u), y_2 - G(u))_\Sigma 
 - (y_2 - G(u), y_2 - G(u))_\Sigma| \\
& \leq \frac{1}{2} \|y_1 - y_2\|_\Sigma \|y_1 + y_2 - 2 G(u)\|_\Sigma,
\end{split}
\end{equation}

which proves the claim using Proposition \ref{prop:G_measurable} to estimate $\|G(u)\|_\Sigma$.
\end{proof}
\subsection{Posterior} \label{sec:posterior}
In the following, we define the posterior by means of a Radon-Nikodym derivative:
\begin{definition}
Let the observation $y \in \mathbb{C}^m$ be given. The posterior density with respect to the prior measure is defined by
\begin{equation} \label{PosteriorDef}
\frac{d\mu^y}{d\mu^0} := \frac{1}{\Lambda(y)} \exp(- \Psi(u, y)), \qquad \Lambda(y) := \int_{\ell^1_\kappa} \exp(- \Psi(u, y)) d\mu^0(u).
\end{equation}
\end{definition}
For more detailed discussion on the posterior measure, we refer to \cite[Section 2]{[Stu]}.
Next we prove that the posterior measure $\mu^y$ is well-defined.
\begin{theorem} \label{thm:WellDefinedPosterior}
Let $y\in\mathbb{C}^m$. Assume that $\mu^0$ has second moment. Then the posterior defined by \eqref{PosteriorDef} is a well-defined probability measure on $\ell^1_\kappa$. Moreover there exist constants $C_1, C_2 > 0$ independent of $y$ such that the normalization constant satisfies
\begin{align*}
 C_1 \exp(- C_2 \|y\|^2_\Sigma) \leq \Lambda(y) &\leq 1.
\end{align*}
\end{theorem}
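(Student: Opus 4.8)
The plan is to establish the two inequalities for $\Lambda(y)$ first and then deduce that $\mu^y$ is a well-defined probability measure. The upper bound $\Lambda(y) \le 1$ is immediate: by Lemma~\ref{lem:Potential}~(i) we have $\Psi(\cdot,y) \ge 0$, hence $\exp(-\Psi(u,y)) \le 1$, and integrating against the probability measure $\mu^0$ gives $\Lambda(y) = \int_{\ell^1_\kappa} \exp(-\Psi(u,y))\,d\mu^0(u) \le \mu^0(\ell^1_\kappa) = 1$. This also shows $\Lambda(y) < \infty$, so the Radon--Nikodym derivative in \eqref{PosteriorDef} is integrable with respect to $\mu^0$ provided $\Lambda(y) > 0$.

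The crux is the lower bound. Here I would use Lemma~\ref{lem:Potential}~(ii): there is a $\mu^0$-measurable set $X \subseteq \ell^1_\kappa$ with $\mu^0(X) > 0$ (in fact $\mu^0(X) \ge \tfrac12$ from the proof) and constants $K, C > 0$ such that $\Psi(u,y) \le K + C\|y\|^2_\Sigma$ for all $u \in X$ and all $y \in \mathbb{C}^m$. Restricting the integral defining $\Lambda(y)$ to $X$ and using that $\exp(-\,\cdot\,)$ is decreasing,
\begin{equation*}
\Lambda(y) \ge \int_X \exp(-\Psi(u,y))\,d\mu^0(u) \ge \int_X \exp\bigl(-K - C\|y\|^2_\Sigma\bigr)\,d\mu^0(u) = \mu^0(X)\,e^{-K}\exp\bigl(-C\|y\|^2_\Sigma\bigr).
\end{equation*}
Setting $C_1 := \mu^0(X)\,e^{-K} > 0$ and $C_2 := C$, both independent of $y$, yields $C_1 \exp(-C_2\|y\|^2_\Sigma) \le \Lambda(y)$. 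In particular $\Lambda(y) > 0$, so the normalization in \eqref{PosteriorDef} is legitimate.

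It remains to check that $\mu^y$ is genuinely a probability measure on $(\ell^1_\kappa, \mathcal{F})$. The map $u \mapsto \tfrac{1}{\Lambda(y)}\exp(-\Psi(u,y))$ is nonnegative and, by Lemma~\ref{lem:Potential}~(iii), $\mu^0$-measurable; defining $\mu^y(F) := \tfrac{1}{\Lambda(y)}\int_F \exp(-\Psi(u,y))\,d\mu^0(u)$ for $F \in \mathcal{F}$ gives a measure by the standard fact that a nonnegative density times a measure is a measure (countable additivity follows from monotone convergence). Finally $\mu^y(\ell^1_\kappa) = \tfrac{1}{\Lambda(y)}\int_{\ell^1_\kappa}\exp(-\Psi(u,y))\,d\mu^0(u) = \tfrac{\Lambda(y)}{\Lambda(y)} = 1$, so $\mu^y$ is a probability measure. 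The main obstacle is really just the availability of the uniform bound in Lemma~\ref{lem:Potential}~(ii), which in turn rests on the second-moment assumption on $\mu^0$ and the growth bound for $G$ from Proposition~\ref{prop:G_measurable}; once that is in hand the argument is routine.
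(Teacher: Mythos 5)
Your proposal is correct and follows essentially the same route as the paper: the upper bound from $\Psi \ge 0$, the lower bound by restricting the integral to the set $X$ from Lemma~\ref{lem:Potential}~(ii), and the conclusion that $\mu^y$ is a well-defined probability measure. The only difference is that you spell out the (routine) verification that a nonnegative measurable density against $\mu^0$ yields a probability measure, which the paper leaves implicit.
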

\begin{proof}
We follow \cite[Theorem 4.1]{[Stu]}. First notice that the potential $\Psi(\cdot, y)$ is $\mu^0$-measurable. To prove the lower bound of $\Lambda(y)$ we apply Lemma \ref{lem:Potential} $(ii)$ to get an appropriate set $X \subseteq \ell^1_\kappa$ with $\mu^0(X) > 0$ such that
\begin{align*}
\int_{\ell^1_\kappa} \exp(- \Psi(u, y))d\mu^0(u) & \geq \int_{X} \exp(- \Psi(u, y)) d\mu^0(u) 
\geq \int_{X} \exp(- K - C \|y\|^2_\Sigma) d\mu^0(u) \\ 
& = \mu^0(X) \exp(-K) \exp(- C \|y\|^2_\Sigma) \geq C_1 \exp(- C_2 \|y\|^2_\Sigma).
\end{align*}
This implies $\Lambda(y) > 0$. For the upper bound we use Lemma \ref{lem:Potential} $(i)$ and that $\mu^0$ is a probability measure to immediately get 
\begin{equation*}
\Lambda(y) = \int_{\ell^1_\kappa} \exp(- \Psi(u, y)) d\mu^0(u) \le \int_{\ell^1_\kappa} 1 \, d\mu^0(u) = 1.
\end{equation*}
This proves well-definedness of the posterior.
\end{proof}
Using Gaussian noise we obtain exponential terms in the Radon-Nikodym derivatives. In order to estimate those, we require a simple lemma, which immediately follows from the mean value theorem.
\begin{lemma} \label{lem:AppendixExponentialHelper}
Let $a, b, c \ge 0$. Then the following bound holds: $\left|\exp\left(- a b \right) - \exp\left(- a c \right) \right| \leq a |b - c|$.
\end{lemma}
In the next theorem we show that the posterior measure is stable with respect to  small variations in $y$ in the Hellinger distance $d_{Hell}$. 
\begin{theorem}\label{thm:StabilityPosterior}
Assume that $\mu^0$ has second moment. Then for all $r > 0$ there exists $C > 0$ such that 
\begin{equation*}
d_{\text{Hell}}(\mu^{y_1}, \mu^{y_2}) \le C \|y_1 - y_2\|_\Sigma, \quad \text{for all } y_1, y_2 \in B_r(0).
\end{equation*}
\end{theorem}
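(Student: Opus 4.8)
The plan is to bound the Hellinger distance directly from its definition \eqref{def:HellDist}, using the prior $\mu^0$ as the common reference measure $\nu$ against which both posteriors $\mu^{y_1}$ and $\mu^{y_2}$ have Radon--Nikodym derivatives, namely $\frac{d\mu^{y_i}}{d\mu^0} = \Lambda(y_i)^{-1}\exp(-\Psi(u,y_i))$ from \eqref{PosteriorDef}. Writing out the integrand, I would add and subtract the mixed term $\Lambda(y_1)^{-1/2}\exp(-\tfrac12\Psi(u,y_2))$ so that
\begin{equation*}
\Big(\tfrac{d\mu^{y_1}}{d\mu^0}\Big)^{1/2} - \Big(\tfrac{d\mu^{y_2}}{d\mu^0}\Big)^{1/2}
= \Lambda(y_1)^{-1/2}\big(e^{-\frac12\Psi(u,y_1)} - e^{-\frac12\Psi(u,y_2)}\big)
 + \big(\Lambda(y_1)^{-1/2} - \Lambda(y_2)^{-1/2}\big)e^{-\frac12\Psi(u,y_2)},
\end{equation*}
and then use $(p+q)^2 \le 2p^2 + 2q^2$ to split $d_{\mathrm{Hell}}(\mu^{y_1},\mu^{y_2})^2$ into two integrals, $I_1$ coming from the difference of the exponentials and $I_2$ coming from the difference of the normalization constants.

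For $I_1$, I would apply Lemma \ref{lem:AppendixExponentialHelper} with $a = \tfrac12$, $b = \Psi(u,y_1)$, $c = \Psi(u,y_2)$ to get $\big|e^{-\frac12\Psi(u,y_1)} - e^{-\frac12\Psi(u,y_2)}\big| \le \tfrac12|\Psi(u,y_1) - \Psi(u,y_2)|$, and then invoke Lemma \ref{lem:Potential}(v), which gives $|\Psi(u,y_1)-\Psi(u,y_2)| \le C(\|u\|_{\ell^1}+1)\|y_1-y_2\|_\Sigma$ uniformly for $y_1,y_2 \in B_r(0)$. Squaring and integrating against $\mu^0$, the factor $\|u\|_{\ell^1}+1$ appears squared, so the integral $\int_{\ell^1_\kappa}(\|u\|_{\ell^1}+1)^2\,d\mu^0(u)$ is finite precisely because $\mu^0$ has second moment; combined with the lower bound $\Lambda(y_1)^{-1} \le C_1^{-1}\exp(C_2\|y_1\|^2_\Sigma) \le C_1^{-1}\exp(C_2 r^2)$ from Theorem \ref{thm:WellDefinedPosterior}, this yields $I_1 \le C\|y_1-y_2\|^2_\Sigma$ with $C$ depending only on $r$.

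For $I_2$, note $\int_{\ell^1_\kappa} e^{-\Psi(u,y_2)}\,d\mu^0(u) = \Lambda(y_2) \le 1$, so $I_2 \le C\big|\Lambda(y_1)^{-1/2} - \Lambda(y_2)^{-1/2}\big|^2$. I would control this by first showing $\Lambda$ is Lipschitz on $B_r(0)$: from $|\Lambda(y_1)-\Lambda(y_2)| \le \int_{\ell^1_\kappa}|e^{-\Psi(u,y_1)} - e^{-\Psi(u,y_2)}|\,d\mu^0(u)$ and again Lemma \ref{lem:AppendixExponentialHelper} together with Lemma \ref{lem:Potential}(v), one gets $|\Lambda(y_1)-\Lambda(y_2)| \le C\int(\|u\|_{\ell^1}+1)\,d\mu^0(u)\,\|y_1-y_2\|_\Sigma \le C\|y_1-y_2\|_\Sigma$, finite since a second moment implies a first moment. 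Then the elementary inequality $|s^{-1/2} - t^{-1/2}| = \frac{|s-t|}{\sqrt{s}\sqrt{t}(\sqrt s + \sqrt t)} \le \frac{|s-t|}{2\min(s,t)^{3/2}}$, applied with $s=\Lambda(y_1)$, $t=\Lambda(y_2)$ and the uniform lower bound $\Lambda(y_i) \ge C_1\exp(-C_2 r^2)$ from Theorem \ref{thm:WellDefinedPosterior}, converts this into $\big|\Lambda(y_1)^{-1/2}-\Lambda(y_2)^{-1/2}\big| \le C\|y_1-y_2\|_\Sigma$. Squaring gives $I_2 \le C\|y_1-y_2\|^2_\Sigma$, and adding the two bounds and taking square roots finishes the proof. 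The main obstacle is mostly bookkeeping: keeping track that every constant depends only on $r$ (through $\|y_i\|_\Sigma \le r$) and on the fixed second moment of $\mu^0$, and making sure the uniform lower bound on $\Lambda$ is used both in the $\Lambda^{-1}$ prefactor of $I_1$ and in the denominator of the $s^{-1/2}-t^{-1/2}$ estimate for $I_2$ — there is no deep difficulty, since all the hard analytic work (well-posedness, the potential estimates, the normalization bounds) has already been done in the preceding results.
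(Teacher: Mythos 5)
Your proposal is correct and follows essentially the same route as the paper: the same decomposition by adding and subtracting the mixed term $\Lambda(y_1)^{-1/2}\exp(-\tfrac12\Psi(u,y_2))$, the same use of Lemma \ref{lem:AppendixExponentialHelper} and Lemma \ref{lem:Potential}(v) together with the second moment of $\mu^0$ for the exponential difference, and the same uniform lower bound on $\Lambda$ from Theorem \ref{thm:WellDefinedPosterior} to control $|\Lambda(y_1)^{-1/2}-\Lambda(y_2)^{-1/2}|$. The only differences are cosmetic (you estimate the Lipschitz continuity of $\Lambda$ and of $x\mapsto x^{-1/2}$ by slightly more explicit elementary inequalities than the paper does).
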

\begin{proof}
First let us define the function $f(u, y) := \exp\left(- \frac{1}{2} \Psi(u, y)\right)$. This function satisfies
\begin{equation}\label{Short1}
\begin{split}
& \|f(\cdot, y_1) - f(\cdot, y_2)\|^2_{L^2(\ell^1_\kappa; \mu^0)} = \int_{\ell^1_\kappa} \left(\exp\left(- \frac{1}{2} \Psi(u, y_1)\right) - \exp\left(- \frac{1}{2} \Psi(u, y_2)\right)\right)^2  d \mu^0(u) \\
& \leq C \int_{\ell^1_\kappa} (\Psi(u, y_1) - \Psi(u, y_2))^2 d \mu^0(u) 
\leq C \int_{\ell^1_\kappa} (\|u\|_{\ell^1} + 1)^2 \|y_1 - y_2\|^2_\Sigma d \mu^0(u)
\leq C \|y_1 - y_2\|^2_\Sigma.
\end{split}
\end{equation}
Here we used Lemma \ref{lem:AppendixExponentialHelper} for the first inequality, Lemma \ref{lem:Potential} $(v)$ for the second inequality and that $\mu^0$ has second moment for the last inequality. The normalization constants as functions of $y$ are Lipschitz continuous. Applying the triangle inequality, Hölder's inequality and \eqref{Short1} shows this:
\begin{equation} \label{Short2}
\begin{split}
|\Lambda(y_1) - \Lambda(y_2)| & \leq \lVert f(\cdot, y_1) - f(\cdot, y_2) \rVert_{L^1( \ell^1_\kappa; \mu^0 )} \leq \lVert f(\cdot, y_1) - f(\cdot, y_2) \rVert_{L^2( \ell^1_\kappa; \mu^0 )} \leq C \lVert y_1 - y_2 \rVert_\Sigma.
\end{split}
\end{equation}


By Theorem \ref{thm:WellDefinedPosterior} we have
\begin{equation*}
\Lambda(y) \ge C_1 \exp(- C_2 \|y\|^2_\Sigma) \ge C > 0, \quad \text{for all } y \in B_r(0).
\end{equation*}
Since $x \mapsto x^{-\frac{1}{2}}$ is Lipschitz continuous on $[C,\infty)$ we find
\begin{equation}\label{Short3}
|\Lambda(y_1)^{-\frac{1}{2}} - \Lambda(y_2)^{-\frac{1}{2}}| \le C |\Lambda(y_1) - \Lambda(y_2)| \le C \|y_1 - y_2\|_\Sigma.
\end{equation}
With this result in mind take a look at two times the square of the Hellinger distance:
\begin{equation}\label{Short4}
\begin{split}
2 d_{\text{Hell}}(\mu^{y_1}, \mu^{y_2})^2 & = \int_{\ell^1_\kappa} \left( \Lambda(y_1)^{-\frac{1}{2}} \exp\left(- \frac{1}{2} \Psi(u, y_1)\right) - \Lambda(y_2)^{-\frac{1}{2}} \exp\left(- \frac{1}{2} \Psi(u, y_2)\right) \right)^2 d\mu^0(u) \\
& = \|\Lambda(y_1)^{-\frac{1}{2}} f(\cdot, y_1) - \Lambda(y_2)^{-\frac{1}{2}} f(\cdot, y_2) \|^2_{L^2(\ell^1_\kappa; \mu^0)} \\
& \le C \left(\Lambda(y_1)^{-\frac{1}{2}} \|f(\cdot, y_1) -  f(\cdot, y_2)\|_{L^2(\ell^1_\kappa; \mu^0)} + \|f(\cdot, y_2)\|_{L^2(\ell^1_\kappa; \mu^0)} |\Lambda(y_1)^{-\frac{1}{2}} - \Lambda(y_2)^{-\frac{1}{2}}| \right)^2.
\end{split}
\end{equation}
Now the desired estimate follows from the previous estimates together with $f \leq 1$ and $\Lambda(y) \geq C$. 
\end{proof}
Theorem \ref{thm:WellDefinedPosterior} has shown that the posterior is in fact well-defined. Often it is desirable to show existence of moments of it. 

\begin{theorem} \label{thm:MomentTheorem}
Assume that $\mu^0$ has $p$-th moment with $p \ge 2$ and let $(X, \|\cdot\|_X)$ be a Banach space. Furthermore, let $f : \ell^1_\kappa \rightarrow X$ be a $\mu^0$-measurable function such that for $\mu^0$-almost every $u \in \ell^1_\kappa$ there holds 
\begin{equation*}
\|f(u)\|_X \le C \|u\|^p_{\ell^1}.
\end{equation*}
Then for any $y\in\mathbb{C}^m$ there holds $f \in L^1(\ell^1_\kappa; \mu^y)$. Moreover, $\mu^y$ has $p$-th moment. 
\end{theorem}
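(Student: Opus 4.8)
The plan is to bound the $\mu^y$-integral of $\|f(\cdot)\|_X$ by the corresponding $\mu^0$-integral, exploiting that the Radon--Nikodym derivative $d\mu^y/d\mu^0$ is bounded from above and that $\Lambda(y)$ is bounded away from zero. The two ingredients already at hand are: $\Psi \ge 0$ by Lemma \ref{lem:Potential}~(i), hence $\exp(-\Psi(u,y)) \le 1$ for every $u$; and $\Lambda(y) \ge C_1 \exp(-C_2 \|y\|^2_\Sigma) > 0$ by Theorem \ref{thm:WellDefinedPosterior}. Since $f$ is $\mu^0$-measurable and $\|\cdot\|_X$ is continuous, $u \mapsto \|f(u)\|_X$ is $\mu^0$-measurable, so all integrals below are meaningful; moreover $\mu^y \ll \mu^0$, so the $\mu^0$-null set outside which the bound $\|f(u)\|_X \le C\|u\|_{\ell^1}^p$ holds is still $\mu^y$-null.

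First I would show $f \in L^1(\ell^1_\kappa; \mu^y)$. Using the definition \eqref{PosteriorDef} of the posterior together with $\exp(-\Psi) \le 1$ and the growth hypothesis on $f$,
\begin{equation*}
\int_{\ell^1_\kappa} \|f(u)\|_X \, d\mu^y(u) = \frac{1}{\Lambda(y)} \int_{\ell^1_\kappa} \|f(u)\|_X \exp(-\Psi(u,y)) \, d\mu^0(u) \le \frac{C}{\Lambda(y)} \int_{\ell^1_\kappa} \|u\|_{\ell^1}^p \, d\mu^0(u).
\end{equation*}
The right-hand side is finite because $\mu^0$ has $p$-th moment and $\Lambda(y) > 0$, which proves the first claim.

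For the assertion that $\mu^y$ has $p$-th moment, I would apply the statement just proved to the particular choice $X = \mathbb{R}$ and $f(u) := \|u\|_{\ell^1}^p$. This $f$ is $\mu^0$-measurable (the norm is continuous and $\mathcal{F}$ is the Borel $\sigma$-algebra) and satisfies $\|f(u)\|_X = \|u\|_{\ell^1}^p \le C\|u\|_{\ell^1}^p$ with $C = 1$. Hence $f \in L^1(\ell^1_\kappa; \mu^y)$, i.e. $\int_{\ell^1_\kappa} \|u\|_{\ell^1}^p \, d\mu^y(u) < \infty$, which is exactly the statement that $\mu^y$ has $p$-th moment.

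I do not expect a genuine obstacle here: the argument is essentially a single estimate. The only points that need a word of justification are the uniform upper bound on the Radon--Nikodym density (from $\Psi \ge 0$ and the lower bound on $\Lambda(y)$ in Theorem \ref{thm:WellDefinedPosterior}) and the measurability of $u \mapsto \|f(u)\|_X$; everything else reduces to the finiteness of the $p$-th moment of $\mu^0$.
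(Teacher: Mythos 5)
Your proposal is correct and follows essentially the same route as the paper: bound $\exp(-\Psi)\le 1$ via $\Psi\ge 0$, use the positive lower bound on $\Lambda(y)$ from Theorem \ref{thm:WellDefinedPosterior}, reduce to the $p$-th moment of $\mu^0$, and then specialize to $f(u)=\|u\|_{\ell^1}^p$ for the second claim. Your treatment is if anything slightly more careful, since you make explicit both the measurability of $u\mapsto\|f(u)\|_X$ and the fact that $\mu^y\ll\mu^0$ preserves the almost-everywhere bound.
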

\begin{proof}
There holds
\begin{align*}
\int_{\ell^1_\kappa} \|f(u)\|_X d \mu^y(u) 
& = \Lambda(y)^{-1} \int_{\ell^1_\kappa} \|f(u)\|_X \exp(- \Psi(u, y)) d \mu^0(u) \\ 
& \le C \Lambda(y)^{-1} \int_{\ell^1_\kappa} \|u\|^p_{\ell^1} \exp(- \Psi(u, y)) d \mu^0(u) 
\leq C \int_{\ell^1_\kappa} \|u\|^p_{\ell^1} d \mu^0(u),
\end{align*}
where we have used that $\Psi \ge 0$. This shows $f \in L^1(\ell^1_\kappa; \mu^y)$. 

Observe that the posterior $\mu^y$ has $p$-th moment since we can choose $f(u) = \|u\|^p_{\ell^1}$.
\end{proof}

\begin{theorem} \label{thm:expectationdistance}
Assume that $\mu^0$ has second moment and let $(X, \|\cdot \|_X)$ be a Banach space. Let $r>0$. Then there exists a $C>0$ such that the following holds: Let $y_1, y_2 \in B_r(0) \subseteq \mathbb{C}^m$. Let $f : \ell^1_\kappa \rightarrow X$ satisfy $f \in L^2(\ell^1_\kappa; \mu^{y_1})$ and $f \in L^2(\ell^1_\kappa; \mu^{y_2})$. Then there holds
\begin{equation*}
\|\mathbb{E}_{\mu^{y_1}}[f] - \mathbb{E}_{\mu^{y_2}}[f]\|_X \le C \|y_1 - y_2\|_\Sigma.
\end{equation*}
In particular, computing moments is stable under small perturbations in the measurements $y$.
\end{theorem}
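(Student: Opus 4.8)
The plan is to reduce the statement to the already-established Hellinger stability of the posterior, plus the hypothesis that $f$ is square-integrable against both posteriors. First I would write the difference of expectations in integral form against the common reference measure $\mu^0$, using the Radon--Nikodym densities from \eqref{PosteriorDef}; that is,
\begin{equation*}
\mathbb{E}_{\mu^{y_1}}[f] - \mathbb{E}_{\mu^{y_2}}[f] = \int_{\ell^1_\kappa} f(u)\left(\frac{d\mu^{y_1}}{d\mu^0}(u) - \frac{d\mu^{y_2}}{d\mu^0}(u)\right) d\mu^0(u).
\end{equation*}
Writing $\frac{d\mu^{y_i}}{d\mu^0} = \left(\sqrt{d\mu^{y_i}/d\mu^0}\right)^2$ and using $a^2 - b^2 = (a-b)(a+b)$, I would factor the integrand and apply the Cauchy--Schwarz inequality in $L^2(\ell^1_\kappa;\mu^0)$ to obtain a bound of the form
\begin{equation*}
\left\|\mathbb{E}_{\mu^{y_1}}[f] - \mathbb{E}_{\mu^{y_2}}[f]\right\|_X \le \left\|f\cdot\left(\left(\tfrac{d\mu^{y_1}}{d\mu^0}\right)^{1/2} + \left(\tfrac{d\mu^{y_2}}{d\mu^0}\right)^{1/2}\right)\right\|_{L^2(\mu^0)} \cdot \left\|\left(\tfrac{d\mu^{y_1}}{d\mu^0}\right)^{1/2} - \left(\tfrac{d\mu^{y_2}}{d\mu^0}\right)^{1/2}\right\|_{L^2(\mu^0)}.
\end{equation*}
The second factor is exactly $\sqrt{2}\, d_{\text{Hell}}(\mu^{y_1},\mu^{y_2})$, which is bounded by $C\|y_1 - y_2\|_\Sigma$ by Theorem \ref{thm:StabilityPosterior}. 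For the first factor I would expand the square, use $(a+b)^2 \le 2a^2 + 2b^2$, and recognize that $\int \|f(u)\|_X^2 \, d\mu^{y_i}(u)$ is finite precisely by the assumption $f \in L^2(\ell^1_\kappa;\mu^{y_i})$; this gives a bound on the first factor by $C(\|f\|_{L^2(\mu^{y_1})} + \|f\|_{L^2(\mu^{y_2})})$, which is a finite constant (though not uniform in $f$ — but the statement only asks for existence of $C$ given $f$, $r$).

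The main subtlety is to get a constant $C$ that is uniform over $y_1, y_2 \in B_r(0)$: the Hellinger bound from Theorem \ref{thm:StabilityPosterior} already provides this for the second factor, and for the first factor the dependence on $y_i$ is only through the normalization constants $\Lambda(y_i)^{-1/2}$, which are uniformly bounded on $B_r(0)$ by the lower bound $\Lambda(y) \ge C_1\exp(-C_2\|y\|_\Sigma^2)$ from Theorem \ref{thm:WellDefinedPosterior}. Hence I would be careful to separate the $f$-dependent-but-$y$-independent piece (the $L^2(\mu^0)$ norm of $f$ times a uniformly bounded weight) from the genuinely $y$-dependent Hellinger piece. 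One harmless technical point: the integrand $f(u)(\cdots)$ is $X$-valued, so the Cauchy--Schwarz step should be justified via the scalar inequality $\|\int g \, d\mu\|_X \le \int \|g\|_X \, d\mu$ applied to $g(u) = f(u)(\tfrac{d\mu^{y_1}}{d\mu^0} - \tfrac{d\mu^{y_2}}{d\mu^0})(u)$ followed by the scalar Cauchy--Schwarz on $\|f(u)\|_X$ times the density difference; this is routine and I would not dwell on it.

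I expect no serious obstacle here: the whole argument is the standard "stability of moments under Hellinger-close measures" computation (as in \cite[Section 6.7]{[Stu]}), and all the pieces — Hellinger stability, the uniform lower bound on $\Lambda$, and square-integrability of $f$ — are already in hand. The only thing requiring mild care is bookkeeping the constants so that $C$ depends on $r$ and $f$ but not on the particular choice of $y_1, y_2 \in B_r(0)$.
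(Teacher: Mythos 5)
Your proposal is correct and is essentially the paper's argument: the paper simply cites \cite[Lemma 6.37]{[Stu]} for the inequality $\|\mathbb{E}_{\mu^{y_1}}[f] - \mathbb{E}_{\mu^{y_2}}[f]\|_X \le 2 (\mathbb{E}_{\mu^{y_1}}[\|f\|^2_X] + \mathbb{E}_{\mu^{y_2}}[\|f\|^2_X])^{1/2} d_{\text{Hell}}(\mu^{y_1},\mu^{y_2})$ and then applies Theorem \ref{thm:StabilityPosterior}, and your factorization $a^2-b^2=(a-b)(a+b)$ plus Cauchy--Schwarz is exactly the standard proof of that cited lemma.
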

\begin{proof}
\cite[Lemma 6.37]{[Stu]} delivers
\begin{align*}
\|\mathbb{E}_{\mu^1}[f] - \mathbb{E}_{\mu^2}[f]\|_X \le 2 \left(\mathbb{E}_{\mu^1}[\|f\|^2_X] + \mathbb{E}_{\mu^2}[\|f\|^2_X] \right)^{\frac{1}{2}} d_{\text{Hell}}(\mu^1, \mu^2).
\end{align*} 
Applying Theorem \ref{thm:StabilityPosterior} yields the desired result.
\end{proof}

\section{Sampling} \label{sec:sampling}

In this section, we derive a method to sample from the posterior. First, we introduce a discretized observation operator to obtain a computable discretized posterior measure and show several properties of it. We proceed to apply a Sequential Monte Carlo method to generate samples from the discretized posterior.

\subsection{Discrete Approximation of the Posterior Measure}
In practice we are not able to compute the solution of the Helmholtz equation exactly. As a consequence we have to replace the observation operator $G$ by its discrete approximation $G_h$ given by
\begin{equation*}
G_h(u) := (y_{u, h}(z_j))_{j = 1}^m.
\end{equation*}
The next lemma states several properties of the discrete observation operator and its relation to $G$.
\begin{lemma} \label{lem:DiscreteObservation}
There exist $h_0, C > 0$ such that for every $h \in (0, h_0]$ and every $u \in \ell^1_\kappa$ the discrete observation operator satisfies 
\begin{align}
\|G_h(u)\|_\Sigma &\le C \left(\|u\|_{\ell^1} + \|g\|_{H^{\frac{1}{2}}(\Gamma_N)} \right), \label{eq:boundedness_discrete_observation} \\
\|G(u) - G_h(u)\|_\Sigma &\le C |\ln h| h^2 \left(\|u\|_{\ell^1} + \|g\|_{H^{\frac{1}{2}}(\Gamma_N)} \right). \label{eq:error_estimate_discrete_observation}
\end{align}
Furthermore, $G_h$ is $\mu^0$-measurable and if $\mu^0$ has $p$-th moment then $G_h \in L^p(\ell^1_\kappa; \mu^0)$.
\end{lemma}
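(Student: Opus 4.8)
The plan is to obtain all four assertions by repackaging, for the discrete operator $G_h$, the estimates already proved for the discrete Helmholtz equation. First I would establish the error bound \eqref{eq:error_estimate_discrete_observation}, since the boundedness \eqref{eq:boundedness_discrete_observation} will follow from it by a triangle inequality. The measurement points $z_j$ lie in $M_\kappa$, so Theorem \ref{thm:pointwise_disc_error_estimate} gives, for every $j = 1,\dots,m$ and every $h \in (0,h_0]$,
\[
|y_u(z_j) - y_{u,h}(z_j)| \le C_\kappa |\ln h|\, h^2 \big(\|u\|_{\ell^1} + \|g\|_{H^{\frac12}(\Gamma_N)}\big).
\]
Since $\mathbb{C}^m$ is finite dimensional, $\|\cdot\|_\Sigma$ is equivalent to the componentwise maximum norm; taking the maximum over $j$ and absorbing the $m$-dependent equivalence constant into $C$ yields \eqref{eq:error_estimate_discrete_observation}.

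For \eqref{eq:boundedness_discrete_observation} I would write $\|G_h(u)\|_\Sigma \le \|G(u)\|_\Sigma + \|G(u) - G_h(u)\|_\Sigma$, bound the first term by Proposition \ref{prop:G_measurable}, the second by \eqref{eq:error_estimate_discrete_observation}, and then use that $|\ln h|\,h^2$ is uniformly bounded on $(0,h_0]$ (shrinking $h_0$ if necessary) to conclude that the right-hand side is $\le C(\|u\|_{\ell^1} + \|g\|_{H^{\frac12}(\Gamma_N)})$. An alternative, slightly longer route is to argue directly from the representation $y_{u,h} = \sum_\ell \alpha_\ell G^{x_\ell}_h + y_{g,h}$ of Theorem \ref{thm:discretized_hh_has_solution}, using that $|G^x_h(z_j)| \le C$ uniformly in $x$ (which itself follows from Proposition \ref{prop:Greensfct_exist_regul} together with Theorem \ref{thm:pointwise_disc_error_estimate}, exactly as in the proof of Corollary \ref{cor:discrete_continuity}) and the $L^2$-bound for $y_{g,h}$ from Proposition \ref{prop:discrete_regular_problem}; I would present the triangle-inequality version.

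For measurability: Corollary \ref{cor:discrete_continuity} shows that each component map $u \mapsto y_{u,h}(z_j)$ is continuous on $\ell^1_\kappa$, hence $G_h \colon \ell^1_\kappa \to \mathbb{C}^m$ is continuous and therefore Borel measurable; since $\mathcal{F}$ is the Borel $\sigma$-algebra generated by $\|\cdot\|_{\ell^1}$, $G_h$ is $\mu^0$-measurable. Finally, if $\mu^0$ has $p$-th moment, then \eqref{eq:boundedness_discrete_observation} gives $\|G_h(u)\|_\Sigma^p \le C\big(\|u\|_{\ell^1} + \|g\|_{H^{\frac12}(\Gamma_N)}\big)^p \le C\big(1 + \|u\|_{\ell^1}^p\big)$, which is $\mu^0$-integrable, so $G_h \in L^p(\ell^1_\kappa;\mu^0)$. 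This part mirrors the proof of Proposition \ref{prop:G_measurable} verbatim.

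No step is a genuine obstacle here — the lemma is essentially bookkeeping that transfers Theorem \ref{thm:pointwise_disc_error_estimate}, Corollary \ref{cor:discrete_continuity} and Proposition \ref{prop:G_measurable} to $G_h$. The only point deserving a line of care is the elementary remark that $|\ln h|\,h^2$ stays bounded for $h \in (0,h_0]$, which is what allows the error estimate to be upgraded into the uniform bound \eqref{eq:boundedness_discrete_observation}.
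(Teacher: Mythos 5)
Your proposal is correct and follows essentially the same route as the paper: the error estimate comes from applying Theorem \ref{thm:pointwise_disc_error_estimate} at the measurement points, the boundedness from the triangle inequality together with Proposition \ref{prop:G_measurable} and the boundedness of $|\ln h|\,h^2$ on $(0,h_0]$, measurability from the continuity in Corollary \ref{cor:discrete_continuity}, and the $L^p$ statement from the uniform bound. No substantive differences.
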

\begin{proof}
From Theorem \ref{thm:pointwise_disc_error_estimate} we have
\begin{equation*}
|y_u(z) - y_{u, h}(z)| \leq C |\ln h| h^2 \left( \|u\|_{\ell^1} + \lVert g \rVert_{H^\frac{1}{2}(\Gamma_N)} \right)
\end{equation*}
with a constant $C> 0$ independent of $u$ and $h$. This estimate applied to the measurement points $(z_j)_{j = 1}^m \in M_\kappa^m$ shows \eqref{eq:error_estimate_discrete_observation}. By the triangle inequality we have
\begin{equation*}
\|G_h(u)\|_\Sigma \leq \|G_h(u) - G(u)\|_\Sigma + \|G(u)\|_\Sigma,
\end{equation*}
which implies \eqref{eq:boundedness_discrete_observation} using the a-priori estimate, $h \leq h_0$ and Proposition \ref{prop:G_measurable}. $G_h$ is $\mu^0$-measurable because it is continuous by Corollary \ref{cor:discrete_continuity}. In particular,
\begin{align*}
\|G_h(u) - G_h(v)\|_\Sigma \leq C \displaystyle\sum\limits_{j = 1}^m \lvert y_{u, h}(z_j)-y_{v, h}(z_j)\rvert  \le C_\kappa m \left(
\|u\|_{\ell^1}+1+\vert \ln h \vert h^2
\right)\|u-v\|_{\ell^1}.
\end{align*} 
An application of \eqref{eq:boundedness_discrete_observation} under the assumption that $\mu^0$ has $p$-th moment shows $G_h \in L^p(\ell^1_\kappa; \mu^0)$.
\end{proof}
We further have to work with a discretized potential $\Psi_h$. Let us fix some arbitrary measurement observation $y \in \mathbb{C}^m$ in (\ref{eq:problem}) and  define the discrete potential 
\begin{equation*}
\Psi_h(u, y) := \frac{1}{2} \|y - G_h(u)\|^2_\Sigma. 
\end{equation*}
The next lemma states some essential properties of $\Psi_h$.
\begin{lemma} \label{lem:DiscretePotential}
There exist $C, h_0 > 0$ such that for every $h \in (0, h_0]$ the discrete potential satisfies for all $u\in \ell^1_\kappa, y\in \mathbb{C}^m$
\begin{align*}
|\Psi(u, y) - \Psi_h(u, y)| \le C |\ln h| h^2 \left( \|u\|^2_{\ell^1} + \|y\|^2_\Sigma + \lVert g \rVert^2_{H^\frac{1}{2}(\Gamma_N)} \right).
\end{align*}
Furthermore, Lemma \ref{lem:Potential} is valid for $\Psi_h$ instead of $\Psi$ with constants and sets independent of $h$.
\end{lemma}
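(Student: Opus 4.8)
The plan is to deduce both assertions from the pointwise error estimate in Theorem~\ref{thm:pointwise_disc_error_estimate} and the already-established properties of $\Psi$ in Lemma~\ref{lem:Potential}, together with the boundedness bounds for $G$ and $G_h$ in Proposition~\ref{prop:G_measurable} and Lemma~\ref{lem:DiscreteObservation}. First I would fix $h_0>0$ from Lemma~\ref{lem:DiscreteObservation} and let $h\in(0,h_0]$. For the quantitative estimate, I expand the difference of the two quadratic forms just as in \eqref{eq:proof:psiest}: writing $\Psi(u,y)-\Psi_h(u,y) = \tfrac12\big[(y-G(u),y-G(u))_\Sigma - (y-G_h(u),y-G_h(u))_\Sigma\big]$ and inserting and subtracting the mixed term $(y-G(u),y-G_h(u))_\Sigma$, one gets
\begin{equation*}
|\Psi(u,y)-\Psi_h(u,y)| \le \tfrac12 \, \|G(u)-G_h(u)\|_\Sigma \, \|2y - G(u) - G_h(u)\|_\Sigma .
\end{equation*}
Now the first factor is bounded by $C|\ln h|h^2(\|u\|_{\ell^1}+\|g\|_{H^{1/2}(\Gamma_N)})$ via \eqref{eq:error_estimate_discrete_observation}, and the second factor is bounded by $2\|y\|_\Sigma + \|G(u)\|_\Sigma + \|G_h(u)\|_\Sigma \le C(\|y\|_\Sigma + \|u\|_{\ell^1} + \|g\|_{H^{1/2}(\Gamma_N)})$ using Proposition~\ref{prop:G_measurable} and \eqref{eq:boundedness_discrete_observation}. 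Multiplying these two bounds and absorbing $\|g\|_{H^{1/2}(\Gamma_N)}$ and cross terms via Young's inequality ($2ab\le a^2+b^2$) produces exactly the claimed bound $C|\ln h|h^2(\|u\|^2_{\ell^1} + \|y\|^2_\Sigma + \|g\|^2_{H^{1/2}(\Gamma_N)})$, with $C$ independent of $h$, $u$, $y$.

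For the second assertion — that Lemma~\ref{lem:Potential} holds verbatim for $\Psi_h$ with constants and sets independent of $h$ — I would walk through its five items and check that each proof only used structural facts about $G$ that $G_h$ shares uniformly in $h$. Property $(i)$, $\Psi_h\ge 0$, is immediate from the definition. For $(ii)$, the same set $X := \{u\in\ell^1_\kappa : \|u\|^2_{\ell^1} < 2\,\mathbb{E}_{\mu^0}[\|\cdot\|^2_{\ell^1}]\}$ works, and the bound $\Psi_h(u,y)\le \|y\|^2_\Sigma + \|G_h(u)\|^2_\Sigma \le \|y\|^2_\Sigma + C\|u\|_{\ell^1}^2 + C$ follows from \eqref{eq:boundedness_discrete_observation}, whose constant $C$ is uniform for $h\le h_0$; note $X$ itself does not depend on $h$. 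Property $(iii)$, $\mu^0$-measurability of $\Psi_h(\cdot,y)$, follows since $G_h$ is $\mu^0$-measurable (Lemma~\ref{lem:DiscreteObservation}). Property $(iv)$ follows from $G_h\in L^p(\ell^1_\kappa;\mu^0)$ when $\mu^0$ has $p$-th moment, again by Lemma~\ref{lem:DiscreteObservation}, and the $L^p$-norm bound is uniform in $h$. Property $(v)$ is obtained exactly as in \eqref{eq:proof:psiest} with $G$ replaced by $G_h$, using $\|G_h(u)\|_\Sigma \le C(\|u\|_{\ell^1}+1)$ on $B_r(0)$ from \eqref{eq:boundedness_discrete_observation}; the resulting constant depends on $r$ but not on $h$.

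The only mild subtlety — and the point I would be most careful about — is bookkeeping the dependence of constants: every invocation of a bound for $G_h$ must come from the versions in Lemma~\ref{lem:DiscreteObservation}, whose constants are valid simultaneously for all $h\in(0,h_0]$, rather than from any $h$-dependent estimate (such as the one in Proposition~\ref{prop:deltaxh}). Once one consistently uses the uniform bounds \eqref{eq:boundedness_discrete_observation}–\eqref{eq:error_estimate_discrete_observation}, the argument is a routine repetition of the proof of Lemma~\ref{lem:Potential}, and no new analytical difficulty arises; in particular the logarithmic factor $|\ln h|$ is harmless since $|\ln h|h^2\to 0$ as $h\to 0$ and is bounded on $(0,h_0]$.
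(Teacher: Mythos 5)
Your argument is correct and follows essentially the same route as the paper: the same polarization-type expansion as in \eqref{eq:proof:psiest}, the uniform bounds \eqref{eq:boundedness_discrete_observation}--\eqref{eq:error_estimate_discrete_observation} together with Proposition~\ref{prop:G_measurable}, and Young's inequality to reach the quadratic right-hand side. The only difference is that you spell out the verification of the five items of Lemma~\ref{lem:Potential} for $\Psi_h$, which the paper omits ``for brevity''; your bookkeeping of the $h$-uniformity of the constants there is exactly the intended argument.
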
 
\begin{proof}
We compute similarly to \eqref{eq:proof:psiest} 
\begin{align*}
|\Psi(u, y) - \Psi_h(u, y)|
&\le C \|G(u) - G_h(u)\|_\Sigma (\|y\|_\Sigma + \|G(u)\|_\Sigma + \|G_h(u)\|_\Sigma).
\end{align*}
Now the a-priori error estimate in Lemma \ref{lem:DiscreteObservation} together with $\|G(u)\| \le C \|u\|_{\ell^1}$ and Young's inequality imply
\begin{align*}
|\Psi(u, y) - \Psi_h(u, y)| & \le C |\ln h| h^2 \left(\|u\|_{\ell^1} + \lVert g \rVert_{H^\frac{1}{2}(\Gamma_N)} \right) (\|y\|_\Sigma + \|u\|_{\ell^1}) \\
&\le C |\ln h| h^2 \left( \|u\|^2_{\ell^1} + \|y\|^2_\Sigma + \lVert g \rVert^2_{H^\frac{1}{2}(\Gamma_N)} \right).
\end{align*}
Lemma \ref{lem:Potential} holds with $\Psi$ replaced by $\Psi_h$ with constants and sets independent of $h$. This can be shown by a straight forward computation following the proof of Lemma \ref{lem:Potential}, which we omit for brevity.
\end{proof}

We apply Lemma \ref{lem:DiscreteObservation} and Lemma \ref{lem:DiscretePotential} to show that Lemma \ref{lem:Potential} holds if we replace $G$ and $\Psi$ by their discrete counterparts $G_h$ and $\Psi_h$. We emphasize that all the estimates in Lemma \ref{lem:Potential} are valid uniformly in $h$ as long as $h \in (0, h_0]$ for a suitably small $h_0$. Then applying Theorem \ref{thm:WellDefinedPosterior} ensures that the discrete posterior $\mu^y_h$ is well-defined, i.e. $\mu^y_h$ is defined by the Radon-Nikodym derivative with respect to the prior
\begin{equation}\label{RepresentativDensity}
\frac{d\mu^y_h}{d\mu^0} := \frac{1}{\Lambda_h(y)} \exp(- \Psi_h(u, y)), \qquad \Lambda_h(y) := \int_{\ell^1_\kappa} \exp(- \Psi_h(u, y)) d\mu^0(u).
\end{equation}

We summarize this result in the next theorem.
\begin{theorem} \label{thm:well_defined_discrete_posterior}
Let $y\in\mathbb{C}^m$. Assume that $\mu^0$ has a finite second moment. Then there exists $h_0 > 0$ such that for any $h \in (0, h_0]$ the discrete posterior measure $\mu^y_h$ is well-defined. There exist constants $C_1, C_2 > 0$ independent of $h$ such that the normalization constants satisfy
\begin{equation*}
C_1 \exp(- C_2 \|y\|^2_\Sigma) \leq \Lambda_h(y) \leq 1, \quad \text{for all } y \in \mathbb{C}^m.
\end{equation*}
In particular \eqref{RepresentativDensity} yields the well-definedness of $\mu_h^y$.
\end{theorem}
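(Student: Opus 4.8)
The plan is to mimic the proof of Theorem~\ref{thm:WellDefinedPosterior} essentially line by line, the only new point being that every constant and every set must be chosen independently of the discretization parameter~$h$. First I would fix $h_0 > 0$ as in Lemma~\ref{lem:DiscretePotential}, which guarantees that for all $h \in (0, h_0]$ the discrete potential $\Psi_h$ enjoys the same five properties as $\Psi$ in Lemma~\ref{lem:Potential}, with the constants and the set $X \subseteq \ell^1_\kappa$ not depending on~$h$. In particular: $\Psi_h(\cdot, y)$ is $\mu^0$-measurable (which also follows from the $\mu^0$-measurability of $G_h$ recorded in Lemma~\ref{lem:DiscreteObservation}); $\Psi_h \ge 0$; and there exist $K, C > 0$ and a $\mu^0$-measurable set $X$ with $\mu^0(X) > 0$ such that $\Psi_h(u, y) \le K + C\|y\|^2_\Sigma$ for all $u \in X$ and $y \in \mathbb{C}^m$.

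For the upper bound I would simply use $\Psi_h \ge 0$, so $\exp(-\Psi_h(u, y)) \le 1$ pointwise; integrating against the probability measure $\mu^0$ gives $\Lambda_h(y) \le 1$. For the lower bound I would restrict the integral defining $\Lambda_h(y)$ to~$X$ and insert the bound from property~$(ii)$:
\begin{equation*}
\Lambda_h(y) = \int_{\ell^1_\kappa} \exp(-\Psi_h(u, y)) \, d\mu^0(u) \ge \int_X \exp\!\left(-K - C\|y\|^2_\Sigma\right) d\mu^0(u) = \mu^0(X)\, \exp(-K)\, \exp\!\left(-C\|y\|^2_\Sigma\right).
\end{equation*}
Setting $C_1 := \mu^0(X)\exp(-K) > 0$ and $C_2 := C$, both independent of $h$ and $y$ by the uniformity in Lemma~\ref{lem:DiscretePotential}, yields $\Lambda_h(y) \ge C_1 \exp(-C_2\|y\|^2_\Sigma) > 0$. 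Since $0 < \Lambda_h(y) \le 1$, the Radon-Nikodym derivative in~\eqref{RepresentativDensity} defines a nonnegative $\mu^0$-integrable density of total mass one, so $\mu^y_h$ is a well-defined probability measure on $(\ell^1_\kappa, \mathcal{F})$.

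The only genuine work, and hence the main (rather mild) obstacle, is to be sure that transferring Lemma~\ref{lem:Potential} to $\Psi_h$ really does produce constants uniform in $h \in (0, h_0]$; this is where one needs the $|\ln h|\,h^2$-type error estimate~\eqref{eq:error_estimate_discrete_observation} and the bound~\eqref{eq:boundedness_discrete_observation} of Lemma~\ref{lem:DiscreteObservation}, so that the discretization error can be absorbed into the same constants once $h_0$ is chosen small enough. After that the proof is a verbatim repetition of Theorem~\ref{thm:WellDefinedPosterior}, and no additional estimates are required.
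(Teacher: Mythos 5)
Your proposal is correct and follows essentially the same route as the paper: the paper's proof simply declares the argument analogous to Theorem~\ref{thm:WellDefinedPosterior}, relying on the fact (recorded in Lemma~\ref{lem:DiscretePotential}) that Lemma~\ref{lem:Potential} holds for $\Psi_h$ with constants and sets independent of $h$, which is exactly the uniformity you isolate and then exploit. Your write-up just makes explicit the restriction to the set $X$ and the resulting bounds on $\Lambda_h(y)$, which the paper leaves implicit.
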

\begin{proof}
The proof is analogous to the proof of Theorem \ref{thm:WellDefinedPosterior} using the fact that Lemma \ref{lem:Potential} holds for the discrete potential $\Psi_h$ with constants and sets independent of $h$ by Lemma \ref{lem:DiscreteObservation}.
\end{proof}
The next theorem states that under certain conditions the rate of convergence in $h$ of the observation operator carries over to the rate of convergence of $\mu^y_h$ to $\mu^y$ in the Hellinger distance. 
\begin{theorem}\label{thm:HellingDistance2}
Let $y\in\mathbb{C}^m$. Assume that $\mu^0$ has a finite fourth moment. Then there exists a $h_0 > 0$ such that for any $h \in (0, h_0]$ we have 
\begin{equation*}
d_{\text{Hell}}(\mu^y, \mu^y_h) \le C |\ln h|h^2.
\end{equation*}
$C$ does not depend on $h$, but on $y$.
\end{theorem}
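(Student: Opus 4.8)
The plan is to follow the template of the proof of Theorem~\ref{thm:StabilityPosterior}, replacing the perturbation ``$y_1 \to y_2$'' by the perturbation ``$\Psi \to \Psi_h$'', and to feed in the $h$-dependent estimates from Lemma~\ref{lem:DiscretePotential} at the point where Theorem~\ref{thm:StabilityPosterior} used Lemma~\ref{lem:Potential}~$(v)$. Concretely, set $f(u) := \exp(-\tfrac12 \Psi(u,y))$ and $f_h(u) := \exp(-\tfrac12 \Psi_h(u,y))$. First I would estimate $\|f - f_h\|_{L^2(\ell^1_\kappa;\mu^0)}^2$: by Lemma~\ref{lem:AppendixExponentialHelper} this is bounded by $C\int_{\ell^1_\kappa} |\Psi(u,y) - \Psi_h(u,y)|^2 \, d\mu^0(u)$, and inserting the pointwise bound from Lemma~\ref{lem:DiscretePotential} gives
\begin{equation*}
\|f - f_h\|_{L^2(\ell^1_\kappa;\mu^0)}^2 \le C |\ln h|^2 h^4 \int_{\ell^1_\kappa} \left( \|u\|_{\ell^1}^2 + \|y\|_\Sigma^2 + \|g\|_{H^{\frac12}(\Gamma_N)}^2 \right)^2 d\mu^0(u).
\end{equation*}
This is exactly where the \emph{fourth} moment assumption on $\mu^0$ is needed: squaring the bracket produces a $\|u\|_{\ell^1}^4$ term, which is $\mu^0$-integrable precisely by that hypothesis; the remaining terms are constants (in $u$) and integrate against the probability measure $\mu^0$. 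Hence $\|f - f_h\|_{L^2(\ell^1_\kappa;\mu^0)} \le C|\ln h| h^2$ with $C$ depending on $y$ and $g$ but not on $h$.

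Next I would control the normalization constants. By the triangle and Hölder inequalities, $|\Lambda(y) - \Lambda_h(y)| \le \|f - f_h\|_{L^1(\ell^1_\kappa;\mu^0)} \le \|f - f_h\|_{L^2(\ell^1_\kappa;\mu^0)} \le C|\ln h| h^2$. From Theorem~\ref{thm:WellDefinedPosterior} and Theorem~\ref{thm:well_defined_discrete_posterior} both $\Lambda(y)$ and $\Lambda_h(y)$ are bounded below by the same positive constant $C_1 \exp(-C_2\|y\|_\Sigma^2) =: c > 0$ (independent of $h$), so since $x \mapsto x^{-1/2}$ is Lipschitz on $[c,\infty)$ we get $|\Lambda(y)^{-1/2} - \Lambda_h(y)^{-1/2}| \le C|\ln h| h^2$. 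Then I write
\begin{equation*}
2\, d_{\text{Hell}}(\mu^y, \mu^y_h)^2 = \left\| \Lambda(y)^{-\frac12} f - \Lambda_h(y)^{-\frac12} f_h \right\|_{L^2(\ell^1_\kappa;\mu^0)}^2,
\end{equation*}
add and subtract $\Lambda(y)^{-1/2} f_h$, and bound the result by
\begin{equation*}
C\left( \Lambda(y)^{-\frac12} \|f - f_h\|_{L^2(\ell^1_\kappa;\mu^0)} + \|f_h\|_{L^2(\ell^1_\kappa;\mu^0)}\, |\Lambda(y)^{-\frac12} - \Lambda_h(y)^{-\frac12}| \right)^2,
\end{equation*}
using $\Lambda(y)^{-1/2} \le c^{-1/2}$ and $\|f_h\|_{L^2(\ell^1_\kappa;\mu^0)} \le 1$ (since $\Psi_h \ge 0$ gives $f_h \le 1$ and $\mu^0$ is a probability measure). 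Combining the two $O(|\ln h| h^2)$ estimates yields $d_{\text{Hell}}(\mu^y, \mu^y_h) \le C|\ln h| h^2$ with $C$ independent of $h$.

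The main obstacle is not conceptual but bookkeeping: one must be careful that the constant $C$ genuinely does not depend on $h$. This requires (i) the lower bound on $\Lambda_h(y)$ from Theorem~\ref{thm:well_defined_discrete_posterior} to hold \emph{uniformly} in $h \in (0,h_0]$ with the \emph{same} $C_1, C_2$ as for $\Lambda(y)$ — which is exactly what that theorem provides — and (ii) the constant in Lemma~\ref{lem:DiscretePotential} to be $h$-independent, which it is. The only genuinely new ingredient compared with Theorem~\ref{thm:StabilityPosterior} is the strengthening of the moment hypothesis from second to fourth moment, forced by the quadratic dependence of the $\Psi - \Psi_h$ bound on $\|u\|_{\ell^1}^2$ (rather than the linear dependence of the $\Psi(\cdot,y_1) - \Psi(\cdot,y_2)$ bound on $\|u\|_{\ell^1}$); I would make sure to flag this point explicitly.
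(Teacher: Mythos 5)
Your proposal is correct and follows essentially the same route as the paper's own proof: the paper likewise mimics Theorem~\ref{thm:StabilityPosterior} with $f(u)=\exp(-\tfrac12\Psi(u,y))$ and $f_h(u)=\exp(-\tfrac12\Psi_h(u,y))$, invokes Lemma~\ref{lem:DiscretePotential} to get the $|\ln h|^2h^4$ bound, uses the fourth moment exactly where you flag it, and controls $|\Lambda(y)^{-1/2}-\Lambda_h(y)^{-1/2}|$ via the uniform-in-$h$ lower bound from Theorem~\ref{thm:well_defined_discrete_posterior}. Your write-up is in fact more detailed than the paper's sketch, and the emphasis on why the moment hypothesis is strengthened from second to fourth is exactly the right point to highlight.
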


\begin{proof}
The proof of this theorem is similar to the proof of Theorem \ref{thm:StabilityPosterior}. The steps are in line with \cite[Theorem 4.9]{DashStuart2015}. However, let us mention few different steps compared to Theorem \ref{thm:StabilityPosterior}. Instead of $f(u,y_1), f(u,y_2)$ we have to consider $f(u) := \exp(-\frac{1}{2} \Psi(u,y))$ and $f_h(u) := \exp(-\frac{1}{2}\Psi_h(u,y)$. As in \eqref{Short1} we find
\begin{align*}
\|f(\cdot, y_1) - f(\cdot, y_2)\|^2_{L^2(\ell^1_\kappa; \mu^0)} & \leq C \int_{\ell^1_\kappa} (\Psi(u, y_1) - \Psi(u, y_2))^2 d \mu^0(u) \\
& \leq C \int_{\ell^1_\kappa} \left( \|u\|^2_{\ell^1} + \|y\|^2_\Sigma + \lVert g \rVert^2_{H^\frac{1}{2}(\Gamma_N)} \right)^2 d\mu^0(u) |\ln h|^2 h^4 \leq C |\ln h|^2 h^4.
\end{align*}
We used Lemma \ref{lem:DiscretePotential} in the second inequality and the assumption that $\mu^0$ has a finite fourth moment in the third inequality.

As in \eqref{Short2} and \eqref{Short3} we obtain the estimate
\begin{align*}
|\Lambda(y)^{-\frac{1}{2}} - \Lambda_h(y)^{-\frac{1}{2}}| \leq \lVert f(y) - f_h(y) \rVert_{L^2(\ell^1_\kappa; \mu^0)} \leq C |\ln h| h^2.
\end{align*}
Combining this and Theorem \ref{thm:well_defined_discrete_posterior} yields, with the analogous computations as in \eqref{Short4}, the desired estimate.

\end{proof}
\begin{theorem} \label{thm:f_h_convergence}
Let $y\in\mathbb{C}^m$. Assume that $\mu^0$ has a finite fourth moment. Let $(X, \|\cdot \|_X)$ be a Banach space and let $f : \ell^1_\kappa \rightarrow X$ have second moments with respect to  both $\mu^y$ and $\mu^y_h$. Then there holds
\begin{equation*}
\|\mathbb{E}_{\mu^y}[f] - \mathbb{E}_{\mu^y_h}[f]\|_X \le C |\ln h| h^2.
\end{equation*}
\end{theorem}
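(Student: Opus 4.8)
The plan is to mimic the proof of Theorem~\ref{thm:expectationdistance}, replacing the stability-in-$y$ estimate by the discretization estimate from Theorem~\ref{thm:HellingDistance2}. The key tool is \cite[Lemma 6.37]{[Stu]}, which gives
\begin{equation*}
\|\mathbb{E}_{\mu^y}[f] - \mathbb{E}_{\mu^y_h}[f]\|_X \le 2 \left(\mathbb{E}_{\mu^y}[\|f\|^2_X] + \mathbb{E}_{\mu^y_h}[\|f\|^2_X] \right)^{\frac{1}{2}} d_{\text{Hell}}(\mu^y, \mu^y_h).
\end{equation*}
The two second-moment terms are finite by hypothesis, so they can be absorbed into the constant $C$, and then Theorem~\ref{thm:HellingDistance2} (which requires $\mu^0$ to have finite fourth moment, exactly the hypothesis here) gives $d_{\text{Hell}}(\mu^y, \mu^y_h) \le C|\ln h|h^2$ for $h \in (0, h_0]$. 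Multiplying the two bounds yields the claim. This is a two-line argument once the cited lemma and Theorem~\ref{thm:HellingDistance2} are in hand.

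One should be slightly careful about the following point: \cite[Lemma 6.37]{[Stu]} bounds the difference of expectations in terms of the Hellinger distance and the second moments of $\|f\|_X$ with respect to the two measures; these second moments are assumed finite in the statement, so there is no issue, but it is worth noting that the constant $C$ in the final bound therefore depends on $f$ (through $\mathbb{E}_{\mu^y}[\|f\|^2_X]$ and $\mathbb{E}_{\mu^y_h}[\|f\|^2_X]$) as well as on $y$; if one wanted a constant uniform in $h$, one would additionally invoke that $\mathbb{E}_{\mu^y_h}[\|f\|^2_X]$ stays bounded as $h \to 0$, which follows from $\Lambda_h(y) \ge C_1\exp(-C_2\|y\|_\Sigma^2)$ in Theorem~\ref{thm:well_defined_discrete_posterior} together with $\Psi_h \ge 0$, reducing it to an integral against $\mu^0$. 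I would include this remark to make the dependence of $C$ transparent.

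The only real obstacle is verifying that the hypotheses of \cite[Lemma 6.37]{[Stu]} are genuinely met in our infinite-dimensional sequence-space setting — namely that $\mu^y$ and $\mu^y_h$ are mutually absolutely continuous (both are absolutely continuous with respect to $\mu^0$ with strictly positive, bounded densities by Theorems~\ref{thm:WellDefinedPosterior} and~\ref{thm:well_defined_discrete_posterior}, hence equivalent), so the Hellinger distance and the cited lemma apply on $(\ell^1_\kappa, \mathcal{F})$. Everything else is routine, and no new estimate beyond those already established in Section~\ref{sec:bayesian_approach} and the present section is needed.
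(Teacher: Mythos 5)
Your proposal is correct and follows exactly the paper's own argument: apply \cite[Lemma 6.37]{[Stu]} to bound the difference of expectations by the Hellinger distance times the square root of the second moments, then invoke Theorem~\ref{thm:HellingDistance2}. Your additional remarks on the $f$-dependence of the constant and on the equivalence of $\mu^y$ and $\mu^y_h$ are sensible clarifications but not a departure from the paper's route.
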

\begin{proof}
As Theorem \ref{thm:expectationdistance} this follows from \cite[Lemma 6.37]{[Stu]} using Theorem \ref{thm:HellingDistance2}.
\end{proof}

\subsection{Sequential Monte Carlo Method} \label{sec:SMC}
The discussions in Section \ref{sec:SMC} are limited to the continuous case for comprehension's sake. The definitions and statements can be straightforwardly generalized to the discrete situation using the previous results from Section \ref{sec:sampling}.

Throughout Section \ref{sec:SMC} let $y\in\mathbb{C}^m$ be a given observation.
In the following section we will use the Sequential Monte Carlo Method (SMC) from \cite{Dashti2017} to draw samples from the posterior measure. We also derive an error estimate. 

Let $J\in \mathbb{N}$ and for $j \in \{0,...,J\}$ define a sequence of measures $\mu_j$ that are absolutely continuous with respect to $\mu_0$ by
\begin{equation}\label{FurtherMuMeasure3}
\begin{aligned}
\frac{d\mu_j}{d\mu^0}(u) := \frac{1}{\Lambda_j} \exp\left(-j J^{-1} \Psi(u,y)\right), \qquad
\Lambda_j := \int\limits_{\ell^1_\kappa}\exp \left(-j J^{-1} \Psi(u,y) \right)d\mu^0(u).
\end{aligned}
\end{equation}
Note that $\mu_0$ is equal to the prior $\mu^0$ and $\mu_J$ equal to the posterior measure $\mu^y$. Our goal is to approximate $\mu_J$ sequentially using information of each $\mu_j$ to construct the next approximation $\mu_{j + 1}$. One idea behind the SMC is the approximation of each measure $\mu_j$ by a weighted sum of Dirac measures
\begin{align}\label{Dirac_particle_meas}
\begin{matrix}
\mu_j\approx \mu_j^N:=\displaystyle\sum_{n=1}^N w_j^{(n)}\delta_{u_j^{(n)}}
\end{matrix}
\end{align}
with $u_j^{(n)} \in \ell^1_\kappa$ and weights $w_j^{(n)} \geq 0$ that sum up to $1$. We define the sampling operator
\begin{align*}
S^N\colon  & \mathbb{P}(\ell^1_\kappa, \mathcal{F}) \rightarrow  \mathbb{P}(\ell^1_\kappa,\mathcal{F}), \\
& \nu \mapsto  \frac{1}{N} \sum_{n = 1}^N \delta_{u^{(n)}}, \quad u^{(n)} \sim \nu.
\end{align*}
Here $\mathbb{P}(\ell^1_\kappa, \mathcal{F})$ denotes the space of probability measures on $(\ell^1_\kappa, \mathcal{F})$. We also define
\begin{align*}
L\colon & \mathbb{P}(\ell^1_\kappa, \mathcal{F}) \rightarrow  \mathbb{P}(\ell^1_\kappa,\mathcal{F}), \\
& \nu  \mapsto \frac{\exp(- J^{-1}\Psi(u,y))}{\int_{\ell^1_\kappa} \exp(- J^{-1}\Psi(u,y))} \nu.
\end{align*}

We remark that the operator $L$ satisfies
\begin{equation*}
\mu_{j + 1} = L \mu_j, \quad j = 0,...,J - 1.
\end{equation*}

We further choose $P_j: \ell^1_\kappa \times \mathcal{F} \rightarrow [0,1]$ as a $\mu_j$-invariant Markov kernel. (For the concrete choice see Section \ref{sec:mcmc}.) That means
\begin{align*}
\mu_j(A) = \int_{\ell^1_\kappa} P_j(u, A)~d\mu_j(u) \qquad \text{for all } A\in\mathcal{F}.
\end{align*}
The idea of the kernel is to redraw samples in each iteration of the algorithm to better approximate $\mu_{j+1}$. (See Step 4. in Algorithm \ref{AlgoDasthiStuart}.) This allows us to define the discrete measures according to \cite[Algorithm 4]{Dashti2017} as follows 
\begin{align*}
\mu_0^N &:= S^N \mu^0, \\
\mu_{j + 1}^N &:= L S^N P_j \mu_j^N, \quad j = 0,...,J - 1. 
\end{align*}

\begin{algorithm}
	\begin{itemize}
		\item[1.] Let $\mu_0^N=S^N \mu_0$ and set $j=0$.
		\item[2.] Resample $u_j^{(n)}\sim \mu_j^N$, $n=1,\cdots,N$.
		\item[3.] Set $w_j^{(n)}=\frac{1}{N}$, $n=1,\cdots,N$ and define $\mu_j^N$ according to \eqref{Dirac_particle_meas}.
		\item[4.] Apply Markov kernel $\widehat{u}_{j+1}^{(n)}\sim P_j(u_j^{(n)},\cdot)$.
		\item[5.] Define $w_{j+1}^{(n)}=\widehat{w}_{j+1}^{(n)}/\left(\sum\limits_{\tilde{n}=1}^N \widehat{w}_{j+1}^{(\tilde{n})}, \right)$ with $\widehat{w}_{j+1}^{(n)}=\exp\left(-J^{-1}\Psi(\widehat{u}_{j+1}^{(n)},y)\right)w_j^{(n)}$ and $\mu^N_{j+1}:=\sum\limits_{n=1}^N \widehat{w}_{j+1}^{(n)} \delta_{\widehat{u}_{j+1}^{(n)}}$.
		\item[6.] $j \leftarrow j + 1$  and go to 2.
	\end{itemize}
	\caption{SMC-Algorithm}\label{AlgoDasthiStuart}
\end{algorithm}
In Algorithm \ref{AlgoDasthiStuart} we see the SMC as described in \cite[Section 5.3.]{Dashti2017}. 

\begin{remark}
Under the assumption that the potential satisfies
\begin{equation*}
\Psi^- \leq \Psi(u) \leq \Psi^+, \quad \text{for all } u \in \ell^1_\kappa 
\end{equation*}
for constants $\Psi^-, \Psi^+ \in \mathbb{R}$ convergence of this method is shown in \cite[Theorem 23]{Dashti2017}. In our setting we do not have an upper bound $\Psi^+$ for the potential and thus we prove a slight generalization.
\end{remark}

\begin{theorem} \label{thm:convergene_infinite_smc}
	For every measurable and bounded function $f$ the measure $\mu^N_J$ satisfies  
	\begin{equation} \label{eq:convergene_infinite_smc}
	\mathbb{E}_{\text{SMC}}[| \mathbb{E}_{\mu^N_J}[f] - \mathbb{E}_{\mu^y}[f] |^2] \leq \left(\sum_{j = 1}^J \left(2 \Lambda_J^{-1}\right)^j \right)^2 \frac{\|f\|^2_{\infty}}{N},
	\end{equation}
	where $\mathbb{E}_{\text{SMC}}$ is the expectation with respect to  the randomness in the SMC algorithm. 
\end{theorem}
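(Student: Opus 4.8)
The plan is to run the contraction argument behind \cite[Theorem 23]{Dashti2017} (compare also \cite[Theorem 4.9]{DashStuart2015}), with the single modification forced by the fact that our potential $\Psi(\cdot, y)$ is bounded below by $0$ but \emph{not} above. After normalising so that $\|f\|_\infty \le 1$, the object to control is the distance on random probability measures
\begin{equation*}
d(\mu, \nu) := \sup_{\|\phi\|_\infty \le 1} \left( \mathbb{E}_{\text{SMC}}\!\left[ \left| \mathbb{E}_\mu[\phi] - \mathbb{E}_\nu[\phi] \right|^2 \right] \right)^{1/2},
\end{equation*}
and \eqref{eq:convergene_infinite_smc} follows once $d(\mu_J^N, \mu^y) \le \bigl( \sum_{j=1}^J (2 \Lambda_J^{-1})^j \bigr) N^{-1/2}$ is established, because $\mathbb{E}_{\text{SMC}}[|\mathbb{E}_{\mu_J^N}[f] - \mathbb{E}_{\mu^y}[f]|^2] \le \|f\|_\infty^2\, d(\mu_J^N, \mu^y)^2$.

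First I would record three elementary building blocks. (i) \emph{Sampling is an $N^{-1/2}$-perturbation:} conditioning on a random probability measure $\nu$ gives $\mathbb{E}_{\text{SMC}}[\,|\mathbb{E}_{S^N\nu}[\phi] - \mathbb{E}_\nu[\phi]|^2 \mid \nu\,] = N^{-1}\operatorname{Var}_\nu(\phi) \le N^{-1}\|\phi\|_\infty^2$, hence $d(S^N\nu, \nu) \le N^{-1/2}$; in particular $d(\mu_0^N, \mu^0) \le N^{-1/2}$. (ii) \emph{The kernels are non-expansive:} since $\mathbb{E}_{P_j\mu}[\phi] = \mathbb{E}_\mu[P_j\phi]$ and $\|P_j\phi\|_\infty \le \|\phi\|_\infty$, one has $d(P_j\mu, P_j\nu) \le d(\mu,\nu)$, while $\mu_j$-invariance gives $P_j\mu_j = \mu_j$. (iii) \emph{Lipschitz estimate for the reweighting $L$ against the exact flow:} writing $g := \exp(-J^{-1}\Psi(\cdot,y)) \in (0,1]$ and $\mu(\psi) := \mathbb{E}_\mu[\psi]$, so that $\mathbb{E}_{L\mu}[\phi] = \mu(g\phi)/\mu(g)$, an elementary rearrangement gives for any probability measures $\mu, \nu$
\begin{equation*}
\left| \mathbb{E}_{L\mu}[\phi] - \mathbb{E}_{L\nu}[\phi] \right| \le \frac{1}{\nu(g)} \Bigl( |\mu(g\phi) - \nu(g\phi)| + \|\phi\|_\infty\, |\mu(g) - \nu(g)| \Bigr).
\end{equation*}
This is the only place where the missing upper bound on $\Psi$ is felt: \cite{Dashti2017} bounds $\nu(g) \ge \exp(-J^{-1}\Psi^+)$ uniformly in $\nu$, which is unavailable to us. Instead I would apply this inequality only with $\nu = \mu_j$, the \emph{deterministic} tempered measure, for which \eqref{FurtherMuMeasure3} yields $\mu_j(g) = \Lambda_{j+1}/\Lambda_j$; since $\Psi \ge 0$ forces $1 = \Lambda_0 \ge \Lambda_1 \ge \dots \ge \Lambda_J > 0$, this provides the explicit lower bound $\mu_j(g) \ge \Lambda_J$. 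Combined with $\|g\phi\|_\infty \le \|\phi\|_\infty$ and $\|g\|_\infty \le 1$, this gives $d(L\mu, L\mu_j) \le 2\Lambda_J^{-1}\, d(\mu, \mu_j)$.

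With these in hand the recursion writes itself. Setting $e_j := d(\mu_j^N, \mu_j)$ and using $\mu_{j+1} = L\mu_j$, $\mu_{j+1}^N = L S^N P_j \mu_j^N$, (iii) applied with $\mu = S^N P_j \mu_j^N$ followed by (i), (ii) and $P_j\mu_j = \mu_j$ gives
\begin{align*}
e_{j+1} &\le 2\Lambda_J^{-1}\, d\bigl( S^N P_j \mu_j^N, \mu_j \bigr) = 2\Lambda_J^{-1}\, d\bigl( S^N P_j \mu_j^N, P_j\mu_j \bigr) \\
&\le 2\Lambda_J^{-1}\bigl( d(S^N P_j\mu_j^N, P_j\mu_j^N) + d(P_j\mu_j^N, P_j\mu_j) \bigr) \le 2\Lambda_J^{-1}\bigl( N^{-1/2} + e_j \bigr).
\end{align*}
Since $e_0 \le N^{-1/2}$ and $\Lambda_J = \Lambda(y) \in (0,1]$ by Theorem \ref{thm:WellDefinedPosterior} (so $2\Lambda_J^{-1} \ge 2$), unwinding this linear recursion — a discrete Grönwall argument — yields $e_J \le \bigl( \sum_{j=1}^J (2\Lambda_J^{-1})^j \bigr) N^{-1/2}$, and substituting into the reduction above proves \eqref{eq:convergene_infinite_smc}.

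I expect the genuinely delicate point to be exactly the one highlighted above: always keeping a deterministic measure ($\mu_j$, never a particle approximation $P_j\mu_j^N$ or $S^N P_j\mu_j^N$) in the denominator of the Lipschitz estimate for $L$, so that the absence of an a priori upper bound on $\Psi$ never forces us to control $1/\nu(g)$ for a random $\nu$; the identity $\mu_j(g) = \Lambda_{j+1}/\Lambda_j \ge \Lambda_J$ is what makes this possible and is the source of the constant $2\Lambda_J^{-1}$ in the statement. The remaining ingredients — the Monte Carlo variance bound for $S^N$, non-expansiveness of the $\mu_j$-invariant kernels, and the unrolling of the recursion — are routine and, unlike in \cite{Dashti2017}, insensitive to whether $\Psi$ is bounded.
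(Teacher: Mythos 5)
Your proposal is correct and follows essentially the same route as the paper: both reduce to the operator distance on $\|f\|_\infty\le 1$, obtain the Lipschitz bound $d(L\mu,L\mu_j)\le 2\Lambda_J^{-1}d(\mu,\mu_j)$ by keeping the deterministic tempered measure $\mu_j$ in the denominator and using $\mathbb{E}_{\mu_j}[\exp(-J^{-1}\Psi)]=\Lambda_{j+1}/\Lambda_j\ge\Lambda_J$ (which needs only $\Psi\ge 0$), and then unwind the standard SMC recursion with the sampling and Markov-kernel lemmas of Dashti--Stuart. Your explicit computation $\mu_j(g)=\Lambda_{j+1}/\Lambda_j$ is in fact slightly cleaner than the paper's shorthand, and you correctly identify the deterministic-denominator trick as the point where the missing upper bound on $\Psi$ is circumvented.
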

\begin{proof} 
	We first prove a variant of \cite[Lemma 10]{Dashti2017} using similar techniques. Define $g := \exp( - J^{-1} \Psi)$ and let $f$ with $\|f\|_\infty \leq 1$ be given. Then from the proof of \cite[Lemma 10]{Dashti2017} and defining $\eta_j = S^N P_j \mu_j^N$ we conclude
\begin{align*}
& (L \mu_j)[f] - (L \eta_j)[f]  = \frac{1}{\mathbb{E}_{\mu_j}[g]}(\mathbb{E}_{\mu_j}[fg] - \mathbb{E}_{\eta_j}[fg]) + \frac{\mathbb{E}_{\eta_j}[fg]}{\mathbb{E}_{\eta_j}[g]} \frac{1}{\mathbb{E}_{\mu_j}[g]}(\mathbb{E}_{\eta_j}[g] - \mathbb{E}_{\mu_j}[g]).
\end{align*}
	A quick calculation shows
	\begin{equation*}
	\mathbb{E}_{\mu_j}[g] = \mathbb{E}_{\mu^0}[\exp( - jJ^{-1} \Psi)] \geq \mathbb{E}_{\mu^0}[\exp( - \Psi)] = \Lambda_J
	\end{equation*}
	and from $\|f\|_\infty \leq 1$ we conclude $|\mathbb{E}_{\eta_j}[fg] / \mathbb{E}_{\eta_j}[g]| \leq 1$. Hence we obtain
\begin{align} \label{eq:SMC_infinite_pre_result}
\begin{split}
|(L \mu_j)[f] - (L \eta_j)[f]| \leq \Lambda_J^{-1} |\mathbb{E}_{\mu_j}[fg] - \mathbb{E}_{\eta_j}[fg]| + \Lambda_J^{-1} |\mathbb{E}_{\eta_j}[g] - \mathbb{E}_{\mu_j}[g]|.
\end{split}
\end{align}
	We define the distance $d_{\text{op}}$ for probability measures $\nu, \eta$ on $(\ell^1_\kappa, \mathcal{F})$ as follows
	\begin{equation*}
	d_{\text{op}}(\nu, \eta) := \sup_{\|f\|_\infty \leq 1} \mathbb{E}_{\text{SMC}}[| \mathbb{E}_{\nu}[f] - \mathbb{E}_{\eta}[f] |^2]^{1 / 2}
	\end{equation*}
	and from $\|g\|_\infty \leq 1, \|f\|_\infty \leq 1$ together with \eqref{eq:SMC_infinite_pre_result} we conclude
	\begin{equation} \label{eq:SMC_L_result}
	d_{\text{op}}(L S^N P_j \mu_j^N, L \mu_j) \leq 2 \Lambda_J^{-1} d_{\text{op}}(S^N P_j \mu_j^N, \mu_j).
	\end{equation}
	Now we show that \cite[Theorem 23]{Dashti2017} holds in our setting. First we use \eqref{eq:SMC_L_result} and the triangle inequality to conclude 
	\begin{align*}
	d_{\text{op}}(\mu^N_{j + 1}, \mu_{j + 1}) &= d_{\text{op}}(LS^N P_j \mu_j^N, L \mu_j) \leq 2 \Lambda^{-1}_J d_{\text{op}}(S^N P_j \mu_j^N, \mu_j) \\
	&= 2 \Lambda^{-1}_J d_{\text{op}}(S^N P_j \mu_j^N, P_j \mu_j) \leq 2 \Lambda^{-1}_J (d_{\text{op}}(S^N P_j \mu_j^N, P_j \mu^N_j) + d_{\text{op}}(P_j \mu^N_j, P_j \mu_j)).
	\end{align*}
	A straight forward continuation similar to the proof of \cite[Theorem 23]{Dashti2017} leads to the statement
	\begin{equation} \label{eq:SMC_d_op_result}
	d_{\text{op}}(\mu^N_J, \mu^y) \leq \sum_{j = 1}^J \left(2 \Lambda^{-1}_J\right)^j \frac{1}{\sqrt{N}}.
	\end{equation}
	Here we remark that both \cite[Lemma 8]{Dashti2017} and \cite[Lemma 9]{Dashti2017} needed for the proof of \cite[Theorem 23]{Dashti2017} still hold. From \eqref{eq:SMC_d_op_result} we conclude that \eqref{eq:convergene_infinite_smc} holds for $\|f\|_\infty \leq 1$ and the statement for a general measurable bounded function $f$ follows by a scaling argument.
\end{proof}

We remark that it is possible to generalize this result if we replace the uniform tempering step $J^{-1}$ in \eqref{FurtherMuMeasure3} by non uniform steps. Therefore let $0 = \beta_0 < \beta_2 < ... < \beta_J = 1.$
We then define the tempered measures as follows
\begin{align*}
\mu_0 &= \mu^0, \\
\mu_{j + 1} &= L_j \mu_j = \Lambda^{-1}_j \exp((\beta_{j} - \beta_{j + 1}) \Psi(u, y)) \mu_j,
\end{align*}
with a suitable normalization constant $\Lambda_j$. The definition of $L$ has to be changed accordingly. (In fact one now requires multiple $L_j$'s.)

\subsection{Markov kernel} \label{sec:mcmc}
The SMC as presented in Algorithm \ref{AlgoDasthiStuart} requires us to choose $\mu_j$-invariant Markov kernels $P_j$. This choice is crucial for the performance of the SMC algorithm, see \cite[section 5.1.3]{cotter2013} or \cite[Remark 5.14]{[Stu]}. In this section we restrict ourselves to the temperature $j = J$, that is the posterior measure $\mu^y$, since results for other temperatures can be obtain by scaling the potential $\Psi$.

We use multiple steps of a standard Metropolis-Hastings algorithm to obtained a $\mu^y$-invariant Markov kernel $P_j$. Algorithm \ref{alg:mcmc} shows a single step of the Metropolis-Hastings algorithm found in \cite[Algorithm 1]{Dashti2017}. 
\begin{algorithm}
	\begin{itemize}
		\item[1.] Let $u \in \ell^1_\kappa$.
		\item[2.] Propose $u^\prime \sim q(u, \cdot)$.
		\item[3.] Draw $A$ from $\mathcal{U}[0, 1]$ independently of $(u, u^\prime)$.
		\item[4.] If $A \leq a(u, u^\prime)$ accept and set $v = u^\prime$.
		\item[5.] Otherwise reject and set $v = u$.
	\end{itemize}
	\caption{Metropolis-Hastings algorithm}\label{alg:mcmc}
\end{algorithm}
Under suitable conditions on the proposal distribution $q(u, \cdot)$ and for a particular choice of the acceptance probability $a$ it is possible to show $\mu^y$-invariance of Algorithm \ref{alg:mcmc}, that is if $u \sim \mu^y$ then $v \sim \mu^y$. We later prove such a statement in Theorem \ref{thm:mcmc} for a particular choice of $q$, for a general statement see \cite[Theorem 21]{Dashti2017}. It is well known that the efficiency of Metropolis-Hastings methods crucially depends on the choice of the proposal distribution $q$ and the acceptance probability $a$. Therefore we want to motivate the choices we make.

Every particle $u \sim \mu^y$ can be described in terms of the number of sources $k$, positions $x$ and amplitudes $\alpha$, hence for $u^\prime$ we propose $k^\prime$, $x^\prime$ and $\alpha^\prime$. Thus we examine proposals $u^\prime$ of the form
\begin{equation} \label{eq:proposal}
\begin{aligned}
k^\prime &= k, \\
x^\prime_\ell &= \begin{cases}
x_\ell + \gamma_x \eta_\ell, &\text{ if } x_\ell + \gamma_x \eta_\ell \in D_\kappa, \\
x_\ell, &\text{ otherwise},
\end{cases} \quad \ell \in \{1,...,k\}, \\
\alpha^\prime &= (1 - \gamma^2_\alpha)^{1 / 2}(\alpha - m_\alpha) + m_\alpha + \gamma_\alpha \xi.
\end{aligned}
\end{equation}
where $\gamma_x \geq 0$, $\gamma_\alpha \in [0, 1]$ and $m_\alpha \in \mathbb{C}^k$. We later explain our choice of $k^\prime = k$ in Example \ref{ex:noresamplingofnrofsources}.

Moreover, $\xi \sim N(0, \Gamma, C)$ is a multivariate complex Gaussian random variable with covariance matrix $\Gamma$ and relation matrix $C$ and $\eta_\ell \sim N(0, I)$ are standard multivariate Gaussians in $\mathbb{R}^d$ for any $\ell$. We assume independence of these random variables w.r.t. each other and $u$. The proposal chooses the identity proposal for the number of sources, a truncated Gaussian Random Walk for the positions such that they remain in $D_\kappa$ and a preconditioned Crank-Nicolson proposal for the amplitudes. The choice for such a proposal becomes apparent in the next theorem.
$\mathcal{U}(D^k_\kappa)$ denotes the uniform distribution in $D^k_\kappa$.
%
\begin{theorem} \label{thm:mcmc}
Let the prior given $k \in\mathbb{N}_0$ sources satisfy 
\begin{align*}
\mu^0(du | k) = \mu^0(d\alpha, dx | k) = \mu^0_{\alpha}(d\alpha | k) \mu^0_x(dx | k), \\
\mu^0_x(\cdot | k) = \mathcal{U}(D^k_\kappa), \quad \mu^0_\alpha(\cdot | k) = N(m_\alpha, \Gamma, C).
\end{align*}

Let $q(u,\cdot)$ be the proposal distribution associated with \eqref{eq:proposal} and define the acceptance probability as follows
\begin{equation*}
a(u, u^\prime) = \min\{1, \exp(\Psi(u) - \Psi(u^\prime))\}.
\end{equation*}
Then Algorithm \ref{alg:mcmc} is $\mu^y$-invariant.
\end{theorem}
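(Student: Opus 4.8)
The plan is to establish $\mu^y$-reversibility (detailed balance) of the Markov kernel $P$ generated by Algorithm~\ref{alg:mcmc}, which immediately yields $\mu^y$-invariance. Writing $P(u,dv) = a(u,v)\,q(u,dv) + \big(1 - \int_{\ell^1_\kappa} a(u,w)\,q(u,dw)\big)\delta_u(dv)$, the rejection term is supported on the diagonal $\{u=v\}$ and is automatically reversible, so everything reduces to showing that the off-diagonal measure $\mu^y(du)\,q(u,du')\,a(u,u')$ on $\ell^1_\kappa\times\ell^1_\kappa$ is invariant under the swap $u\leftrightarrow u'$. Since $d\mu^y/d\mu^0 = \Lambda(y)^{-1}\exp(-\Psi(\cdot))$ and $a(u,u') = \min\{1,\exp(\Psi(u)-\Psi(u'))\}$, a one-line computation gives
\[
\mu^y(du)\,q(u,du')\,a(u,u') = \Lambda(y)^{-1}\,\min\{\exp(-\Psi(u)),\exp(-\Psi(u'))\}\;\mu^0(du)\,q(u,du'),
\]
whose first factor is already symmetric in $(u,u')$. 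Hence it suffices to prove that the prior-weighted proposal $\nu(du,du') := \mu^0(du)\,q(u,du')$ is a symmetric measure on $\ell^1_\kappa\times\ell^1_\kappa$. Equivalently, one may invoke the general Metropolis--Hastings criterion \cite[Theorem~21]{Dashti2017} with transpose density $d\nu^\perp/d\nu \equiv 1$.

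To verify symmetry of $\nu$ I would exploit the product structure of both $\mu^0$ and $q$. The proposal~\eqref{eq:proposal} sets $k' = k$, so $\nu$ is concentrated on $\{k = k'\}$ and it is enough to check symmetry stratum by stratum; conditioning on $k$, the prior factorises as $\mu^0(du\mid k) = \mu^0_\alpha(d\alpha\mid k)\,\mu^0_x(dx\mid k)$ with $\mu^0_x(\cdot\mid k) = \mathcal{U}(D_\kappa^k)$ and $\mu^0_\alpha(\cdot\mid k) = N(m_\alpha,\Gamma,C)$, and the proposal factorises into the independent pCN move $q_\alpha$ on the amplitudes and the truncated Gaussian random walk $q_x$ on the positions. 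It thus suffices to show that $\mu^0_\alpha(d\alpha\mid k)\,q_\alpha(\alpha,d\alpha')$ and $\mu^0_x(dx\mid k)\,q_x(x,dx')$ are each swap-invariant. For the amplitudes, with $\alpha\sim N(m_\alpha,\Gamma,C)$ and $\xi\sim N(0,\Gamma,C)$ independent, the pair $(\alpha,\alpha')$ with $\alpha'=(1-\gamma_\alpha^2)^{1/2}(\alpha-m_\alpha)+m_\alpha+\gamma_\alpha\xi$ is jointly complex Gaussian; a direct computation shows both marginals equal $N(m_\alpha,\Gamma,C)$ while the cross-covariance and cross-relation equal $(1-\gamma_\alpha^2)^{1/2}\Gamma$ and $(1-\gamma_\alpha^2)^{1/2}C$, which are symmetric, so $(\alpha,\alpha')\stackrel{d}{=}(\alpha',\alpha)$ — this is precisely reversibility of the pCN proposal with respect to its reference Gaussian. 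For the positions, $\mu^0_x(\cdot\mid k)$ has constant density, and for each coordinate the random-walk kernel has an absolutely continuous part with density $\propto \mathbf{1}_{D_\kappa}(x_\ell)\,\mathbf{1}_{D_\kappa}(x_\ell')\,g_{\gamma_x}(x_\ell'-x_\ell)$, where $g_{\gamma_x}$ is the (even) density of $N(0,\gamma_x^2 I)$, together with an atom at $x_\ell$ of mass $\int_{\mathbb{R}^d\setminus D_\kappa} g_{\gamma_x}(w)\,dw$ accounting for out-of-domain proposals; the absolutely continuous part is symmetric because $g_{\gamma_x}$ is even, and the atomic part is carried by $\{x_\ell = x_\ell'\}$ and hence swap-invariant as well (trivially so if $\gamma_x = 0$). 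Taking the product over $\ell = 1,\dots,k$ and then summing over $k$ with weights $q(k)$ assembles the symmetry of $\nu$ and closes the argument.

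The Gaussian bookkeeping for the pCN step and the product/atom decomposition of the position kernel are routine. The point requiring care — and the main obstacle — is the correct handling of the atomic pieces of the proposal, namely the deterministic choice $k' = k$ and the ``stay put'' branch of the truncated random walk: one must isolate the genuinely off-diagonal contribution to $\nu$ on which symmetry is being claimed, while verifying that all remaining mass lives on the diagonal and is therefore harmless for detailed balance. A secondary, purely technical, subtlety is that the amplitudes are complex Gaussian rather than real, so the swap-invariance of the joint law of $(\alpha,\alpha')$ must be confirmed at the level of both covariance and relation (and the mean $m_\alpha$), as carried out above.
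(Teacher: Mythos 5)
Your proof is correct and follows essentially the same route as the paper: both arguments reduce matters to the swap-invariance of $\mu^0(du)\,q(u,du')$, verified factor-by-factor (joint complex Gaussian with symmetric cross-covariance and cross-relation for the pCN amplitude move; even random-walk density against the constant uniform density plus a diagonal atom for the truncated position move), and then transfer to $\mu^y$ through the Radon--Nikodym derivative and the Metropolis acceptance ratio. The only cosmetic difference is that you write the off-diagonal balance in the manifestly symmetric form $\min\{\exp(-\Psi(u)),\exp(-\Psi(u'))\}$ whereas the paper checks $\eta(du,du') = \exp(\Psi(u')-\Psi(u))\,\eta(du',du)$ directly; these are equivalent.
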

\begin{proof}
The proof is given in the appendix.
\end{proof}

In the trivial case of $\Psi = 0$ such a proposal is always accepted and is therefore invariant with respect to the prior.

\begin{example} \label{ex:noresamplingofnrofsources}
Our numerical experiments show that the acceptance probability is very small if $k^\prime \not = k$. Such a behaviour is often found for dimension crossing MCMC, see \cite{RISy1997} or \cite{[Brooks]}. We expect such behaviour since the posterior measure given the number of sources $\mu^y(\cdot | k)$ may not be close to $\mu^y(\cdot | k^\prime)$ if $k^\prime \not = k$. We illustrate this behaviour in the next example. 

For the purpose of this example assume that the acceptance probability of the proposal which removes one source is given by
\begin{equation*}
a(u, u^\prime) = \min\{1, \exp(\Psi(u) - \Psi(u^\prime))\}.
\end{equation*}
Let us assume that we have a pair $(\alpha,x)\in \mathbb{C}\times D_\kappa$ such that $G(\alpha, x, 0, 0,...) \approx y / 3$. We define $u = (\alpha,x,\alpha,x,\alpha,x, 0, 0, \dots)$ and $u^\prime = (\alpha, x, \alpha, x, 0, 0, \dots)$ obtained by dropping the last two entries from $u$. A short computation using the properties of the observation operator now shows that
\begin{align*}
a(u, u^\prime) & = \exp(\|G(u) - y\|^2_\Sigma / 2) \exp(- \|G(u^\prime) - y\|^2_\Sigma / 2)  \\
& \approx \exp(0) \exp(- \lVert y/3 \rVert^2_\Sigma/2) = \exp(- \|y\|^2_\Sigma / 18),
\end{align*}
which may be very small. Such a result is visualized in Section \ref{sec:experiments} in Figure \ref{fig:P_emp_k}.
We conclude that a reasonable proposal for removing sources should modify the positions and amplitudes of the remaining source such that the data misfit is not too large. Such a proposal is non-trivial and out of the scope of this paper. The SMC algorithm \ref{AlgoDasthiStuart} ensures sufficient mixing properties through the resampling step, which allows the particles to change the number of sources $k$ to some other $k^\prime$. This of course requires that the prior with respect to the number of sources is chosen such that there are enough samples with $k^\prime$ sources. For the stated reasons we now only consider proposals where the number of sources is fixed.
\end{example}

\FloatBarrier
\section{Numerical Experiments} \label{sec:experiments}

\subsection{Sharpness of the proven estimates} \label{sec:setup_first}
In this section, we present numerical results for our prior model. Let $D = [0, 1]^2$ be the physical domain. We want to recover two sound sources placed in $x_{\text{exact},1}=(0.25,0.75)$ and $x_{\text{exact},2}=(0.75,0.75)$ with amplitudes $\alpha_{\text{exact},1}=10+10i$ and $\alpha_{\text{exact}, 2}=10+10i$. We also choose $g=0$. The three measurement points $(z_\ell)_{\ell=1}^3$ are located at the following positions
\begin{align*}
\begin{matrix}
z_1=(0.1,	0.5), &
z_2=(0.5,	0.5), &
z_3=(0.9,	0.5).
\end{matrix}
\end{align*}
We choose $\kappa = 0.05$ and thus as source domain $D_{\kappa} = [0.1,0.9]\times [0.6,0.9]$. We indeed have $z_1,\dots, z_3\in M_\kappa$.

For the parameters of the Helmholtz equation we consider the fluid density $\rho=1$, frequency $\zeta=30$, sound speed $c=5$, and coefficients $\alpha(\zeta)=1$, $\beta(\zeta)= 1 / 30$ for the isolating material on the boundary $\Gamma_Z = \Gamma$. 

We define 
\begin{align*}
u_{\text{exact}} = (\alpha_{\text{exact},1}, x_{\text{exact},1}, \alpha_{\text{exact},2}, x_{\text{exact},2}, 0,0,\dots )\in \ell^1_\kappa
\end{align*}
and $y_{\text{exact}} := G_h(u_{\text{exact}})$. We chose the prior number of sources $k$ to be Poisson distributed with expectation $2$, that is $k \sim \operatorname{Poi}(2)$. Given the number of sources $k$, we choose the amplitudes $\alpha_1^k,\dots, \alpha_k^k$ to be i.i.d. $N(10 + 10 i, 2, 0)$.

The prior source positions are i.i.d. uniformly distributed $x_1^k, \dots, x_k^k \sim \mathcal{U}(D_\kappa)$. We in particular assume that the random variables $\alpha_\ell^k$ and $x^k_\ell$ are independent. For the observational noise we consider $\eta = (\eta_\ell)_{\ell = 1}^3$ with $\eta_1$, $\eta_2$, $\eta_3$ all i.i.d. $N(0, 0.1, 0)$.  We further define $\xi = (\xi_1,...,\xi_k)$ with $\xi_\ell \sim N(0, 2, 0)$ i.i.d. random variables and set $\gamma_\alpha = 0.4$ and $\gamma_x = 0.1$ for the proposal in \eqref{eq:proposal}. For the applicability of Theorem \ref{thm:mcmc} we have $m_\alpha = 10+10i$. We use non uniform tempering steps $\beta = (0, 0.03, 0.3, 1)^T$ and we apply the $\mu_j$-invariant Markov kernel from Algorithm \ref{alg:mcmc} ten times to every particle in each tempering step. The parameters $\gamma_\alpha, \gamma_x$ for the random proposals are chosen such that the average acceptance probability of the Markov kernel $P_2$ is approximately $0.30$.

We triangulate the domain $D$ uniformly with triangles of diameter $h$.

\subsubsection{Source Separation Capabilities} \label{sec:ssc}

\begin{figure*}
\includegraphics[width=\textwidth]{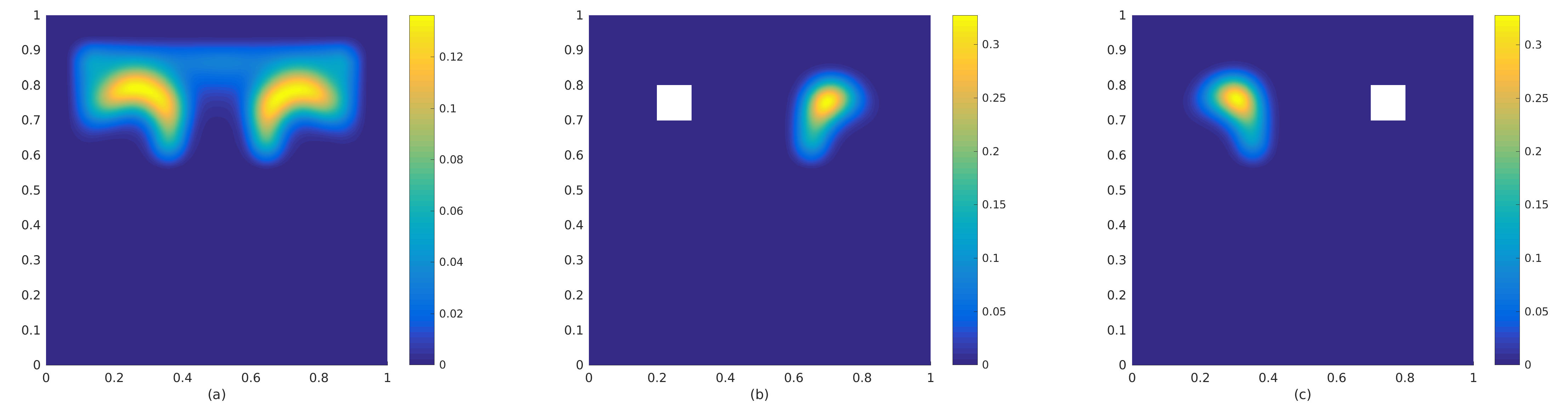}
\caption{(a) shows the function $P_{\text{emp}}$ in $[0, 1]^2$. (b) shows $P_{\text{emp}}( \cdot |Q, k = 2)$ if one of the sources is in the white quadrant $Q = [0.2, 0.3] \times [0.7, 0.8]$ and (c) for the quadrant $Q = [0.7, 0.8] \times [0.7, 0.8]$.}
\label{fig:Experiment1}
\end{figure*}

\begin{figure*}
\includegraphics[width=\textwidth]{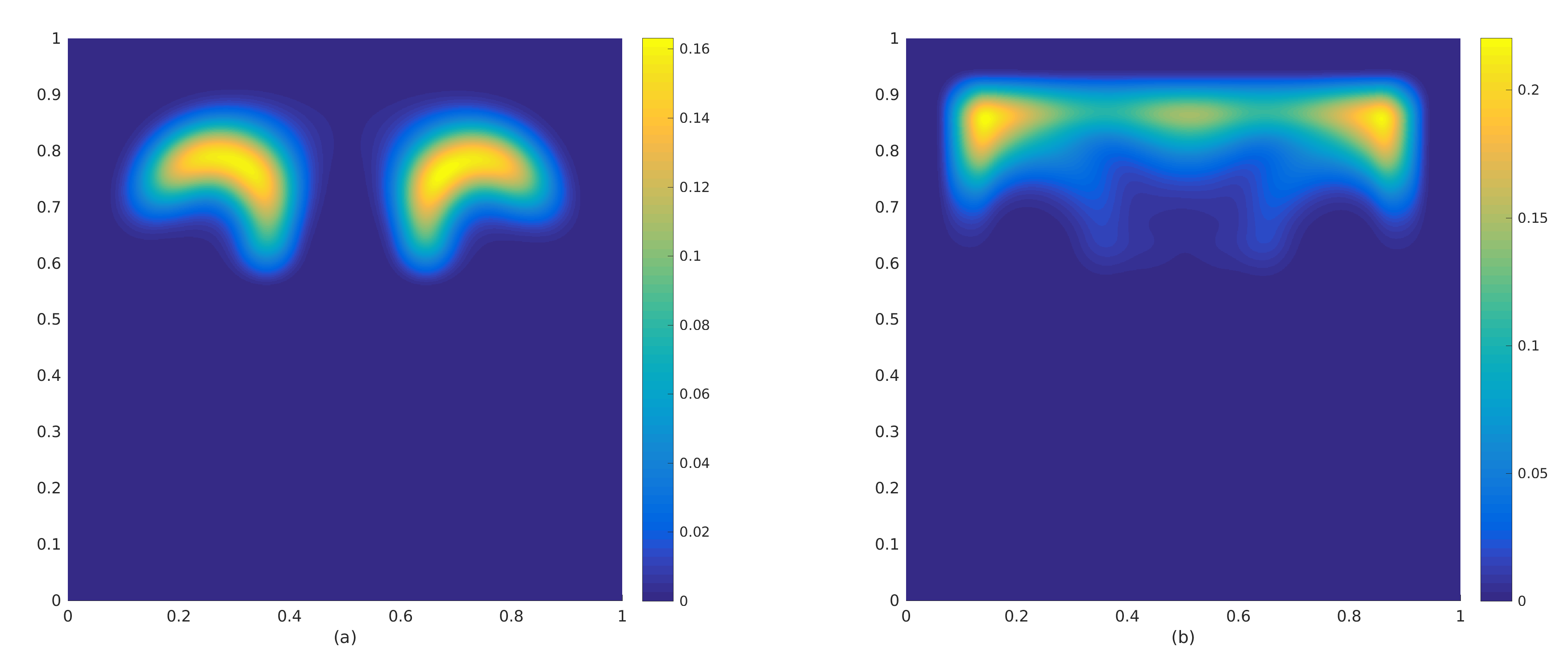}
\caption{(a) shows the function $P_{\text{emp}}(\cdot | k = 2)$ in $[0, 1]^2$ and (b) shows $P_{\text{emp}}(\cdot | k = 3)$. We conclude that $\mu^y(\cdot | k = 2)$ differs significantly from $\mu^y(\cdot | k = 3)$. C.f. the arguments in Example \ref{ex:noresamplingofnrofsources}.}
\label{fig:P_emp_k}
\end{figure*}

We now present results on the source positions of samples from the posterior. We consider the SMC as introduced in Section \ref{sec:sampling} with $N = 10^7$ samples. We choose a uniform mesh of size $h = \sqrt{2} \cdot 2^{-7}$ for all experiments Section \ref{sec:ssc}.

For $n=1,\dots,N$ Algorithm \ref{AlgoDasthiStuart} delivers the particles 
$u^{(n)} = (\alpha_{1}^{(n)},x_{1}^{(n)}, \dots, \alpha_{k^{(n)}}^{(n)}, x_{k^{(n)}}^{(n)},0,0,\dots) \in \ell^1_\kappa$ with weights $w^{(1)},\dots, w^{(N)}$. The function $P_{\text{emp}}$ is defined as approximation to the probability that a source is located at $x \in D$ in the following sense
\begin{align*}
P_{\text{emp}}(x) & := \sum_{n=1}^N w^{(n)} \max_{1 \leq \ell \leq k^{(n)}} \left\{ K_\varepsilon \left(x - x^{(n)}_{\ell}\right) \right\} \\
& \approx \mathbb{P}_{\mu^y}(u \text{ has a source in } B_\varepsilon(x)).
\end{align*}
The function $K_\varepsilon$ is a smooth cut-off function approximating the indicator function $1_{B_\varepsilon(0)}$ for $\varepsilon = 0.04$ defined as
\begin{equation*}
K_\varepsilon(x) := 
\begin{cases}
1, &\text{if } \|x\| \leq \varepsilon, \\
\frac{1}{2} + \frac{1}{2} \cos\left(\frac{2}{\varepsilon} \pi (\|x\| - \varepsilon) \right), &\text{if } \varepsilon \leq \|x\| \leq \frac{3}{2} \varepsilon, \\ 
0, &\text{otherwise.}
\end{cases}
\end{equation*}
In Figure \ref{fig:Experiment1} (a) we can see the function $P_{\text{emp}}$ recovers the true source positions $x_{\text{exact},1}$ and $x_{\text{exact},2}$ quite well. 

We further want to analyse positions of pairs. In particular we would like to see if for a sample with two sources the positions are close to the $x_{\text{exact},1}$ and $x_{\text{exact}, 2}$. Given that a sample has two sources, i.e. $k=2$ and one is located in $Q \subseteq D_\kappa$, we look for the probability that the second source is close to some $x\in D$ but not in $Q$. Formally for the index set $I(Q, k = 2) = \{n \in \{1,...,N\} | x_{1}^{(n)} \in Q \text{ or } x_{2}^{(n)} \in Q, \, k^{(n)} = 2 \}$ we define
\begin{align*}
P_{\text{emp}}(x | Q, k = 2) & := \frac{1}{\sum_{n \in I(Q, k = 2)} w^{(n)}} \sum_{n \in I(Q, k = 2)} w^{(n)} \max_{ x^{(n)}_{\ell} \not \in Q, \ell=1,2 } K_\varepsilon \left(x - x^{(n)}_\ell\right) \\
& \approx \mathbb{P}_{\mu^y}(u \text{ has a source in } B_\varepsilon(x) \setminus Q \, | \, u \text{ has a source in } Q, \, k = 2).
\end{align*}
Figure \ref{fig:Experiment1} (b) and (c) show results for different $Q$. We conclude that if one of the sources is located near $x_{\text{exact}, 1}$ then the other one is near $x_{\text{exact}, 2}$ and vice versa. This shows that Algorithm \ref{AlgoDasthiStuart} correctly identifies pairs of sound sources.

\subsubsection{Convergence in Mean Square Error for Functions}
Theorem \ref{thm:convergene_infinite_smc} states linear convergence in Mean Square Error (MSE) for bounded measurable functions $f$ such that

\begin{equation*}
\mathbb{E}_{\text{SMC}}[| \mathbb{E}_{\mu^N_J}[f] - \mathbb{E}_{\mu^y}[f] |^2] \leq C \frac{\|f\|^2_{\infty}}{N}.
\end{equation*}
We are interested in testing the convergence of the functions $f_1,...,f_5$ listed in Table \ref{table:MSE_functions} to extract information from the posterior measure.
\begin{table*}[t] 
\centering 
\begin{tabular}{ | c | c | } 
\hline 
$f$ & Description for $\mathbb{E}_{\mu^y}[f]$ \\ \hline
$f_1(u) := \|u\|_{\ell^1}$ & First moment \\ \hline  
$f_2(u) := 1_{\{u \text{ has exactly two sources}\}}(u)$ & Probability to have exactly two sources \\ \hline
$f_3(u) := |y_u(z_{\operatorname{prediction}})|$ & Expected pressure amplitude at $z_{\operatorname{prediction}}$ \\ \hline
$f_4(u) := \operatorname{Var}(|y_u(z_{\operatorname{prediction}})|)$ & Variance of pressure amplitude at $z_{\operatorname{prediction}}$ \\ \hline 
$f_5(u) := 10 \log_{10}(\max(1,|\operatorname{Re}(y_u(z_{\operatorname{prediction}}) \exp(-i \zeta t))|))$ & Decibel function at $z_{\operatorname{prediction}}$ \\ \hline
\end{tabular}
\caption{List of functions used in the experiments in Figure \ref{fig:MSE} and Figure \ref{fig:f_h_convergence}.}
\label{table:MSE_functions} 
\end{table*}

We choose $z_{\operatorname{prediction}} := (1 / 2, 1 / 4) \in D$ to predict the pressure at a point distinct from the measurement points $z_1,...,z_3$. For $f_5$ we choose the time $t = 1$. We remark that Theorem \ref{thm:convergene_infinite_smc} shows convergence results only for $f_2$. Nevertheless we observe convergence for the other functions in Figure \ref{fig:MSE} as well. For this experiment we fix the mesh size to $h_{\operatorname{ref}} = \sqrt{2} \cdot 2^{-7}$ and only vary the number of samples. The reference measure is computed using $N_{\operatorname{ref}}= 10^7$ samples and the other measures use $N = 100 \cdot 2^k$ samples with $k = 1,...,9$. For the expectation integral "$\mathbb{E}_{\operatorname{SMC}}$" we average the result over $100$ SMC runs.

\begin{figure*}[t!] 
\begin{center}
\includegraphics[width=0.5\textwidth]{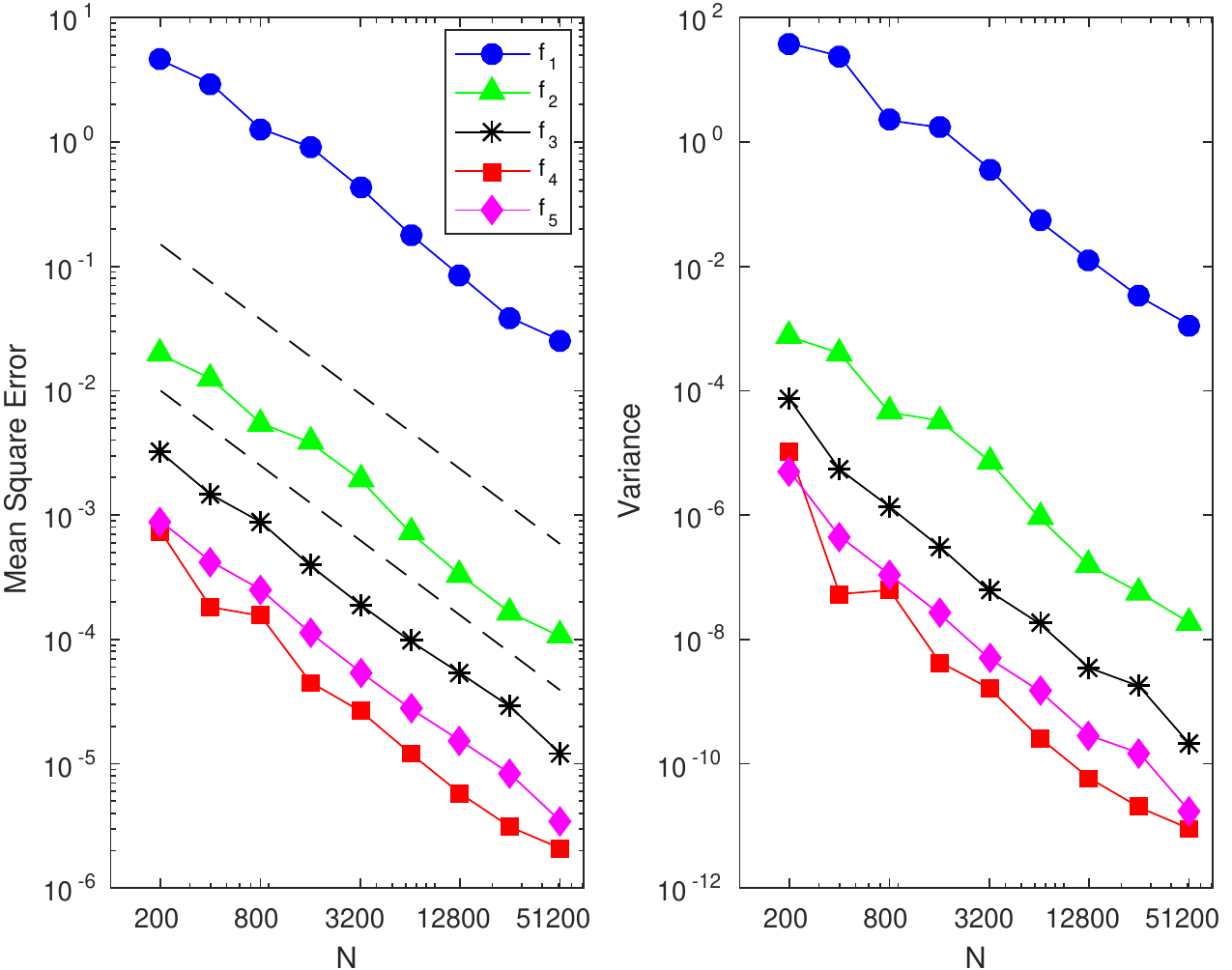}
\end{center}
\caption{On the left figure we see the MSE for the functions in Table \ref{table:MSE_functions}. The rate of $\mathcal{O}(1 / N)$ (drawn dashed) is proven only for $f_2$. In the right figure we see the variance of the MSE averaged over $100$ runs for each $N$ on the x-axis.} \label{fig:MSE}
\end{figure*}

\subsubsection{Convergence in the Hellinger Distance} \label{section:HellMSESection}
We verify Theorem \ref{thm:HellingDistance2} numerically. It states convergence of the discretized to the true posterior with respect to the mesh parameter $h$. More specifically  
\begin{equation*}
d_{\text{Hell}}(\mu^y_h, \mu^y) \leq C |\ln h| h^2.
\end{equation*}
The definition of the Hellinger Distance requires us to evaluate the Radon-Nikodym derivatives $d\mu^y_h / d\mu^0$ and $d \mu^y / d\mu^0$ for the same samples from the prior. Instead we use the equivalent formulation (see Appendix \ref{sec:appendix_hellinger})
\begin{equation} \label{eq:hellinger_numerics}
\begin{aligned}
4 d^2_{\text{Hell}}(\mu^y_h, \mu^y) &= \int_{\ell^1_\kappa} \left(1 - \left(\frac{d\mu^y_h}{d \mu^y}(u) \right)^{1 / 2} \right)^2 d\mu^y(u) \\
								&+ \int_{\ell^1_\kappa} \left(1 - \left(\frac{d\mu^y}{d \mu^y_h}(u) \right)^{1 / 2} \right)^2 d\mu^y_h(u),
\end{aligned}
\end{equation}
which allows us to approximate the integrals using samples from the SMC method. In this experiment we consider a fine mesh with $h_{\operatorname{ref}} := \sqrt{2} \cdot 2^{-7}$ and approximate $\mu^y \approx \mu^y_{h_{\operatorname{ref}}}$. We compare this measure with $\mu^y_h$ for $h =\sqrt{2} \cdot 2^{-k}$ for $k=2,...,6$. We fix $N = 5 \cdot 10^5$ and averaged the approximated Hellinger Distance over $50$ runs. Figure \ref{fig:HellingerDist} shows that the convergence rate $\mathcal{O}(|\ln h|h^2)$ holds sharply.
\begin{figure*}
\centering
\includegraphics[width=0.5\textwidth]{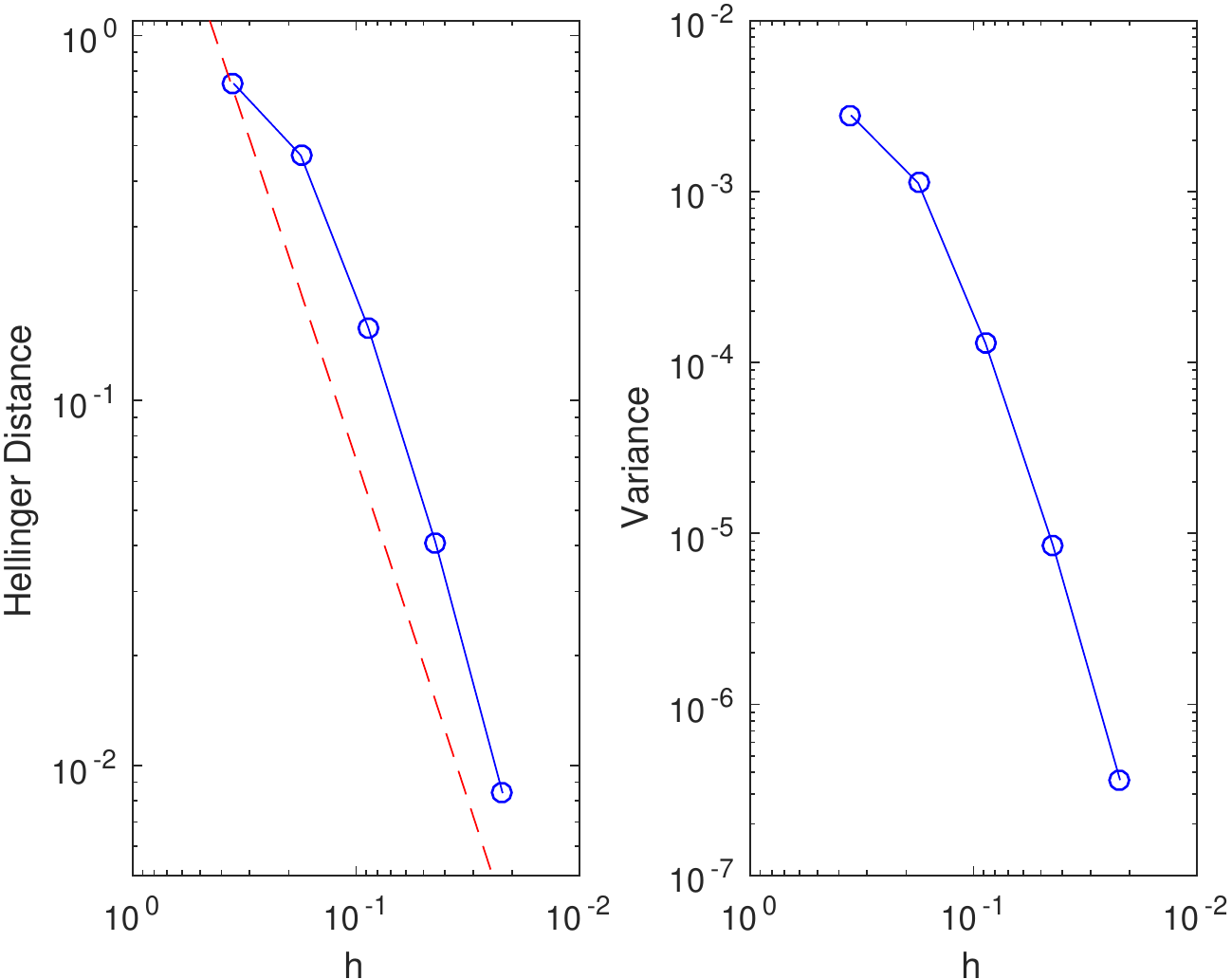}
\caption{The curve of the left figure shows the Hellinger distance between the discretized measure $\mu^y_h$ and the approximation of $\mu^y$ averaged over $50$ runs. The dashed line shows the reference $\mathcal{O}(|\ln h|h^2)$. The right figure shows the variance of the Hellinger Distance for different $h$ computed from $50$ runs.}\label{fig:HellingerDist}
\end{figure*} 
\subsubsection{Convergence in \texorpdfstring{$\mathbf{h}$}{Lg} for functions} 
Finally, we want to verify Theorem \ref{thm:f_h_convergence} which states the convergence of the error
\begin{equation} \label{eq:f_h_convergence_bound}
e_h(f) := \|\mathbb{E}_{\mu^y}[f] - \mathbb{E}_{\mu^y_h}[f]\|_X \le C |\ln h| h^2
\end{equation}
for functions $f$ with second moments with respect to both $\mu^y$ and $\mu^y_h$. We use the functions $f_1,...,f_5$ defined in Table \ref{table:MSE_functions}. For all experiments we choose $N_{\operatorname{ref}} = 5 \cdot 10^5$ samples from the SMC sampler. We approximate $\mu^y$ using $h_{\operatorname{ref}} = \sqrt{2} \cdot 2^{-7}$. For the other measures we vary the mesh parameter $h = \sqrt{2} \cdot 2^{-k}$ for $k = 2,...,6$ and average the values of $\mathbb{E}_{\mu^y_h}[f]$ and $\mathbb{E}_{\mu^y}[f]$ over $50$ runs. Figure \ref{fig:f_h_convergence} shows the values of $e_h$ and indicates that the proven convergence rates are sharp.

\begin{figure*}
\centering
\includegraphics[width=0.5\textwidth]{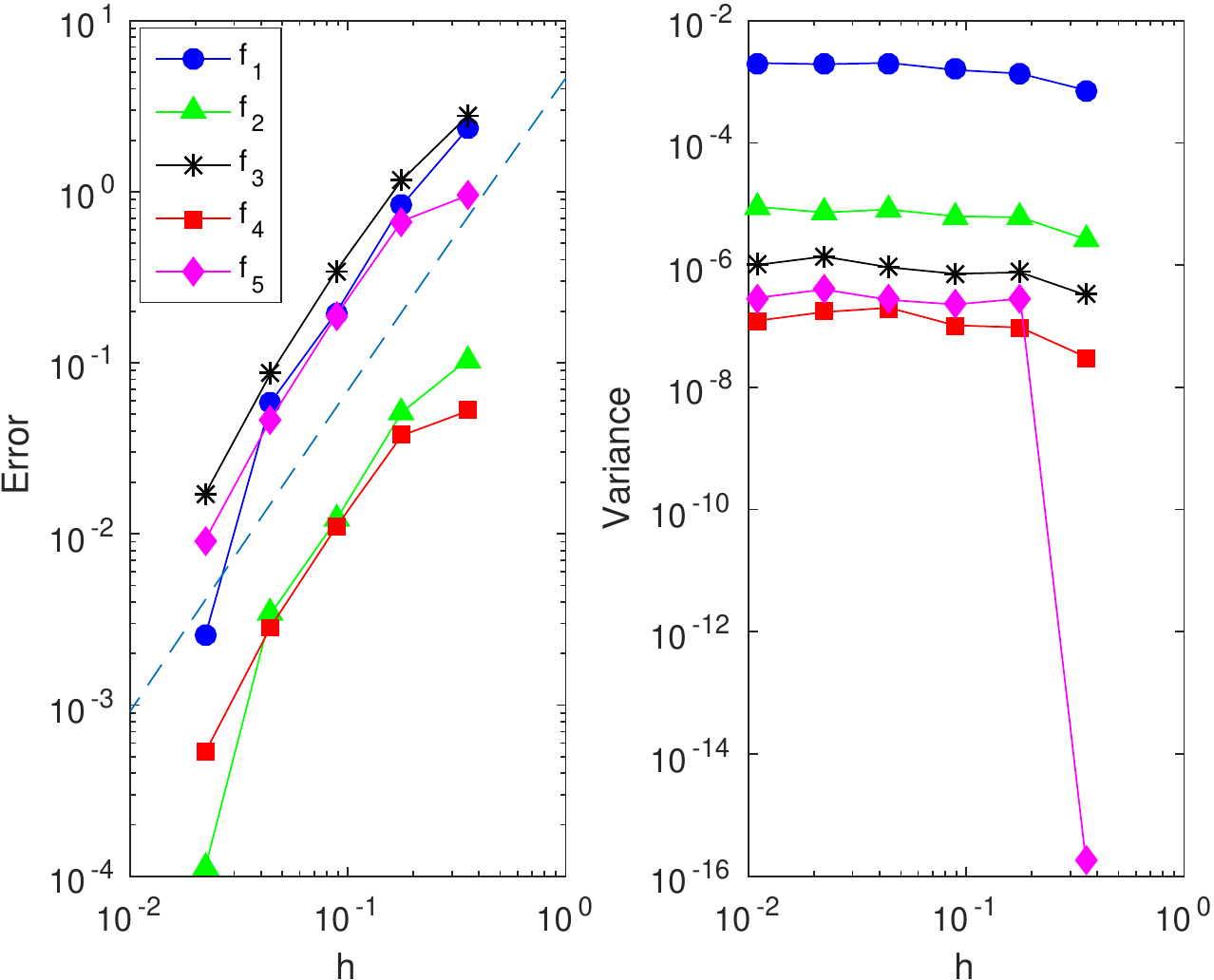}
\caption{The left figure shows the error $e_h(f_i)$ from \eqref{eq:f_h_convergence_bound} for the functions in Table \ref{table:MSE_functions}. The dashed line is the reference convergence rate $\mathcal{O}(|\ln h |h^2)$. The right figure shows the variance of approximations to $\mathbb{E}_{\mu^y_h}[f_i]$ for $h = \sqrt{2} \cdot 2^{-k}$ with $k = 2,...,7$ computed using $50$ runs. The value for $k = 7$ corresponds to the reference measure.}\label{fig:f_h_convergence}
\end{figure*} 


\subsection{An example with more sources and the MAP estimator} \label{sec:Experiment2}

In this section we want to demonstrate the feasibility of our method for more than two sources. Unless mentioned otherwise, in this paragraph we choose the same parameters as described in section \ref{sec:setup_first}. For this experiment we assume 5 sources $x_1,...,x_5$ and 9 measurement locations $z_1,...,z_{9}$, see Figure \ref{fig:Experiment2}. We change the prior location of the sources to be $\mathcal{U}(D_\kappa)$ with $D_\kappa$ being $U$ shaped as follows 
\begin{align*}
D_\kappa &:= D_1 \cup D_2 \cup D_3, \\
D_1 &:= [0.1, 0.9] \times [0.1, 0.6], \quad D_2 := [0.1, 0.35] \times [0.6, 0.9], \quad D_3 := [0.65, 0.9] \times [0.6, 0.9].
\end{align*} 
The experiment is depicted in figure \ref{fig:Experiment2}. The prior for the number of sources is distributed as $k \sim \operatorname{Poi}(4)$ and the observational noise $\eta_1,...,\eta_{9} \sim N(0, 0.1, 0)$ i.i.d.. For the Markov Kernel we choose $\gamma_\alpha = 0.4$ and $\gamma_x = 0.05$ such that the acceptance ratio is roughly $0.25$. We use $N = 10^7$ samples for the method and fix the mesh size $h = \sqrt{2} \cdot 2^{-7}$. 

Next we address the notion of a maximum a posteriori estimator (MAP). We define the empirical MAP-index and the empirical MAP-index given $k$ sources as follows
\begin{equation}\label{MAPMAP}
\begin{split}
n_{\operatorname{MAP}}:=\argmax\limits_{
	\begin{matrix}
	n \in \{1,...,N\}
	\end{matrix}} w^{(n)}, \qquad
n_{\operatorname{MAP}}^k:=\argmax\limits_{
	\begin{matrix}
	n \in \{1,...,N\},\\
	k^{(n)} = k
	\end{matrix}} w^{(n)}.
\end{split}
\end{equation}
In our experiments both quantities are unique. 

\begin{figure*}
\centering
\includegraphics[width=0.5\textwidth]{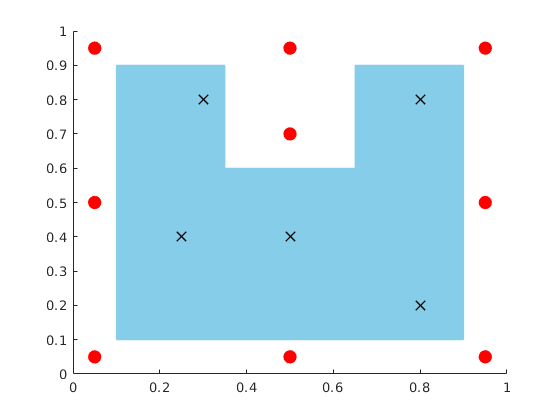}
\caption{ The setup for the experiment described in Section \ref{sec:Experiment2}. The crosses denote the position of the sources and circles the measurement positions. The area where the prior can have sources is u-shaped.}\label{fig:Experiment2}
\end{figure*} 


Some notable quantities are depicted in table \ref{tab:results_experiment_2}. We neglect samples with $k < 3$ and $k > 7$, since their probability is negligible. For this experiment the MAP estimator has the correct number of sources, but the weight $w^{(n_{MAP}^5)}$ is close to $w^{(n_{MAP}^6)}$. Notice that the prior prefers $5$ sources over $6$ sources, which shows that the MAP estimator alone does not seem to be appropriate to infer the number of sources. For this experiment the uncertainty in the number of sources gets reduced significantly. Most notably, the posterior probability for $k = 3$ is less than one percent. 



\begin{table*}[t]
  \centering
  \begin{tabular}{|M{1.1cm}|M{1.1cm}|M{2.2cm}|M{2.2cm}|}
    \hline
    No. of Sound Sources $k$ & $\mathbb{P}_{\mu^0}(k)$ & $\mathbb{P}_{\mu_J^N}(k)$ & $w^{(n_{MAP}^k)}$\\ \hline
    3  & 0.195	& 0.004 & $7.53 \cdot 10^{-7}$ \\ \hline
    4  & 0.195  & 0.281 & $1.09 \cdot 10^{-5}$ \\ \hline
    5  & 0.156	& 0.543 & $1.50 \cdot 10^{-5}$ \\ \hline
    6 & 0.104	& 0.156 & $1.48 \cdot 10^{-5}$ \\ \hline
    7 & 0.060	& 0.014 & $9.84 \cdot 10^{-6}$ \\ \hline
  \end{tabular}
  \caption{ The table shows the prior probability density $\mathbb{P}_{\mu^0}(k)$ with $k \sim \operatorname{Poi}(4)$, the posterior probability for $k$ sources $\mathbb{P}_{\mu^N_J}(k)$ and the weights for the MAP estimator.}\label{tab:results_experiment_2}
\end{table*}

\begin{figure*}
\centering
\includegraphics[width=0.3\textwidth]{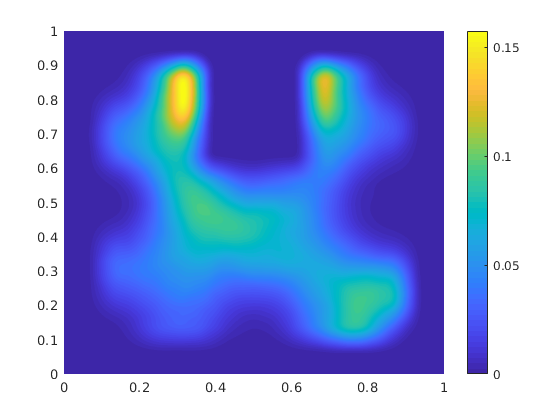}
\includegraphics[width=0.3\textwidth]{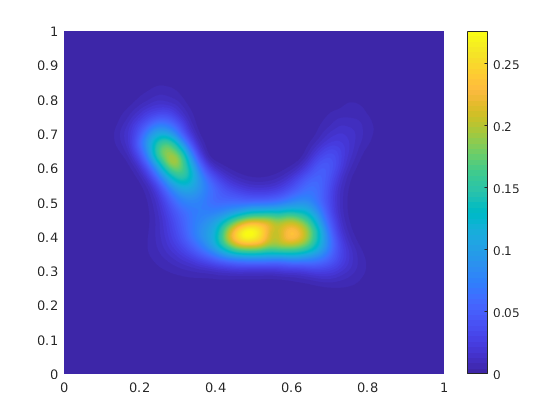}
\includegraphics[width=0.3\textwidth]{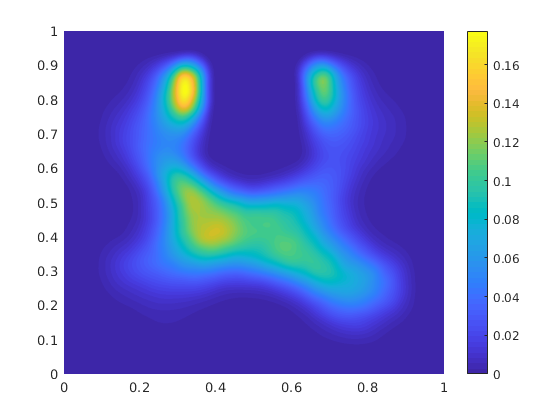}
\includegraphics[width=0.3\textwidth]{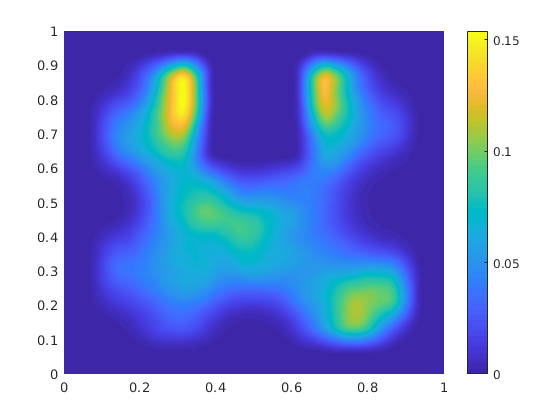}
\includegraphics[width=0.3\textwidth]{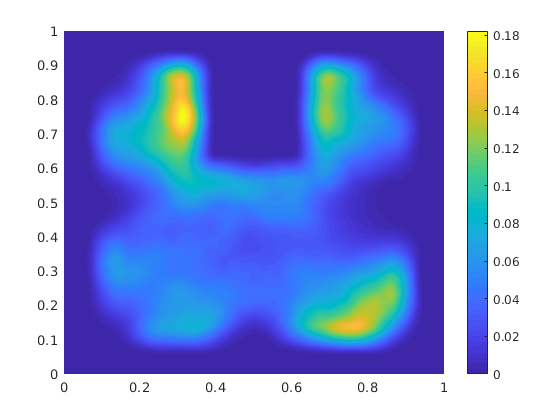}
\includegraphics[width=0.3\textwidth]{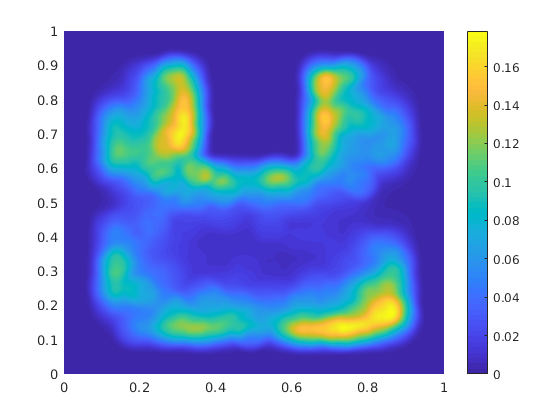}
\caption{The top left image shows $P_{\operatorname{emp}}(\cdot)$, the top middle $P_{\operatorname{emp}}(\cdot | k = 3)$ and the top right $P_{\operatorname{emp}}(\cdot | k = 4)$. The bottom row shows $P_{\operatorname{emp}}(\cdot | k)$ for $k = 5, 6, 7$ from left to right.}
\label{fig:p_emp_experiment_2}
\end{figure*}

\section*{Acknowledgements}
All authors gratefully  acknowledge  support  from  the  International  Research Training Group IGDK 1754 „Optimization and Numerical Analysis for Partial Differential Equations with Nonsmooth Structures“, funded by the German Research Foundation (DFG) and the Austrian Science Fund (FWF): [W 1244-N18].

Dominik Hafemeyer acknowledges support from the graduate program TopMath of the Elite Network of Bavaria and the TopMath Graduate Center of TUM Graduate School at Technische Universität München. He is also a scholarship student of the Studienstiftung des deutschen Volkes.

We thank Elisabeth Ullmann for the support she gave us by proof reading the draft of this paper and her helpful hints to improve the paper. We further thank Jonas Latz for supplying us with useful background on the Sequential Monte Carlo method.

\begin{appendix} 

\section{Appendix} 

\subsubsection*{Proof of Proposition \ref{lem:pointwise_green_disc_error}}

The proofs in this part of the appendix follow the proofs in \cite[Section 4]{[BeGaRo]}. The authors explicitly state though that they to not analyse the dependence of the appearing constants with respect to $\|x-z\|$, where $x \in D_\kappa$ denotes the position of a Dirac and $z \in M_\kappa$ an arbitrary measurement point. First we restate a slight variant of \cite[Corollary 5.1]{SchatzWahlbinMaxNormEst}. In the following, let us consider the finite element space
\begin{align*}
\widetilde V_h := \left\lbrace v_h\in H^1(D,\mathbb{R})\cap C(\bar D,\mathbb{R}): v_h|_T \text{ is affine linear } \forall T\in \mathcal{T}_h \right\rbrace.
\end{align*}
\begin{proposition} \label{prop:orthogonalitySchatzWahlbin}
Let $D_1\subset\subset D_2\subset\subset D$ be given. Moreover, let $f \in C(\bar D_2, \mathbb{R}) \cap H^1(D_2, \mathbb{R})$ and $f_h \in \widetilde V_h$ satisfy
\begin{align} \label{eq:prop:orthogonalitySchatzWahlbin}
\left( \nabla (f-f_h), \nabla v_h \right)_{L^2(D_2)} - \left( \frac{\zeta}{c} \right)^2 \left( f-f_h, v_h \right)_{L^2(D_2)} = 0
\end{align}
for all $v_h\in \widetilde V_h \text{ with }\operatorname{supp}v_h \subseteq D_2$. Then there exist constants $C,C^\prime>0$ such that if $\operatorname{dist}(D_1,\partial D_2) \geq \kappa$, $C^\prime h \leq \kappa$ and $\operatorname{dist}(D_2,\partial D) \geq \kappa$, then for $0\leq l \leq 2$ 
\begin{align*}
\lVert f - f_h \rVert_{L^\infty(D_1)} \leq & C \left( |\ln h|h^l \lVert f \rVert_{W^{l,\infty}(D_2)} + \kappa^{-1} \lVert f-f_h \rVert_{L^1(D_2)} \right).
\end{align*}
The constants $C,C^\prime$ do not depend on $h,f,f_h,D_1$ and $D_2$.
\end{proposition}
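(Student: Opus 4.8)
The plan is to obtain the estimate as a corollary of the interior maximum-norm estimates of Schatz and Wahlbin \cite{SchatzWahlbinMaxNormEst}; the only new point is to make the dependence of the constants on the separation parameter $\kappa$ explicit, which is not carried out in the original reference.

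First I would check that the hypotheses fit the Schatz--Wahlbin framework. The identity \eqref{eq:prop:orthogonalitySchatzWahlbin} is precisely the interior Galerkin orthogonality, for test functions supported in $D_2$, associated with the real symmetric bilinear form $a_0(u,v):=(\nabla u,\nabla v)_{L^2}-(\zeta/c)^2(u,v)_{L^2}$, i.e. with the operator $L:=-\Delta-(\zeta/c)^2$. Since $a_0$ satisfies G\r{a}rding's inequality (cf. \cite[Lemma 3.2]{[BeGaRo]}) and $(\widetilde V_h)_{h>0}$ is a quasi-uniform family of $C^0$ piecewise linear spaces, this is exactly the situation covered by \cite{SchatzWahlbinMaxNormEst}.

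Second, I would invoke the core local estimate of that reference on the nested pair $D_1\subset\subset D_2$. Its proof rests on a dyadic decomposition of the collar $D_2\setminus D_1$ into annuli together with local energy (Caccioppoli) and superapproximation estimates; the mesh condition $C^\prime h\le\kappa$ and $\operatorname{dist}(D_1,\partial D_2)\ge\kappa$ guarantee that there is room for these annuli and that their number stays controlled. This yields, for any fixed $s>d/2$ and any $\chi\in\widetilde V_h$,
\begin{align*}
\|f-f_h\|_{L^\infty(D_1)}\le C|\ln h|\,\|f-\chi\|_{L^\infty(D_2)}+C\kappa^{-1}\|f-f_h\|_{H^{-s}(D_2)},
\end{align*}
where the negative power of $\kappa$ arises from rescaling the domains to unit separation, and the zeroth-order part $-(\zeta/c)^2(f-f_h,\cdot)$ of $a_0$, being of lower order, contributes only terms that are swept into the negative-norm (pollution) term during the iteration.

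Third, I would bound the two terms separately. Taking $\chi$ to be the Lagrange interpolant of $f$ and using the Bramble--Hilbert lemma together with the quasi-uniformity assumption gives $\inf_{\chi\in\widetilde V_h}\|f-\chi\|_{L^\infty(D_2)}\le Ch^l\|f\|_{W^{l,\infty}(D_2)}$ for every $0\le l\le2$. For the pollution term, $s>d/2$ gives the embedding $H^s(D_2)\hookrightarrow C(\bar D_2)$ and hence, by duality, $\|f-f_h\|_{H^{-s}(D_2)}\le C\|f-f_h\|_{L^1(D_2)}$. Combining the three steps yields the claim. I expect the main obstacle to be the bookkeeping in the second step: one must verify that the constants in the Caccioppoli and superapproximation estimates depend on $\kappa$ only through an explicit negative power, so that after the dyadic iteration one reaches the stated form with the factor $\kappa^{-1}$; a cruder bound with a larger negative power of $\kappa$ would still suffice for every subsequent use.
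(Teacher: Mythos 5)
Your proposal is correct and follows essentially the same route as the paper: the paper simply cites \cite[Theorem 5.1, Corollary 5.1]{SchatzWahlbinMaxNormEst} with the parameter choice $p=0$, $q=1$, $r=2$ (which directly produces the $|\ln h|h^{l}\lVert f\rVert_{W^{l,\infty}}$ term and the $L^{1}$ pollution term), noting only that inspection of that proof shows the Galerkin orthogonality is needed just for test functions supported in $D_2$. Your additional unpacking of the Schatz--Wahlbin machinery (dyadic annuli, superapproximation, the $H^{-s}$-to-$L^{1}$ duality for $s>d/2$, and the remark that the exact negative power of $\kappa$ is immaterial since $\kappa$ is fixed) is consistent with, and more detailed than, what the paper records.
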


\begin{proof}
All the assumptions in \cite[Theorem 5.1, Corollary 5.1]{SchatzWahlbinMaxNormEst} are satisfied by the remarks following \cite[A.4]{SchatzWahlbinMaxNormEst}. \cite[Corollary 5.1]{SchatzWahlbinMaxNormEst} requires \eqref{eq:prop:orthogonalitySchatzWahlbin} to hold on $D$. But inspecting the proof of \cite[Corollary 5.1]{SchatzWahlbinMaxNormEst} and the application of \cite[Theorem 5.1]{SchatzWahlbinMaxNormEst} therein shows that \eqref{eq:prop:orthogonalitySchatzWahlbin} is sufficient. Finally, we choose $p=0$, $q = 1$ and have $r=2$ in the statement of \cite[Corollary 5.1]{SchatzWahlbinMaxNormEst}.
\end{proof}

\begin{proof}[proof of Proposition \ref{lem:pointwise_green_disc_error}]
Let $z \in M_\kappa$ and let $x \in D_\kappa$. Theorem \ref{thm:discretized_hh_has_solution} shows that there exists a $h_0$ independent of $x$ such that $G^x_h$ exists for $h\in (0,h_0]$. We want to apply Proposition \ref{prop:orthogonalitySchatzWahlbin} to the real and imaginary part of $G^x-G^x_h \in C(\bar D_{\frac{\kappa}{2}}) \cap H^1(D_{\frac{\kappa}{2}})$ to obtain
\begin{align} \label{eq:continuous_umf}
\begin{split}
|G^x(z)- G^x_h(z)| \leq \lVert G^x - G^x_h \rVert_{L^\infty(B_{\frac{1}{8}\kappa}(z))} 
\leq  C \left( |\ln h| h^2 \lVert G^x \rVert_{ W^{2,\infty}( B_{\frac{1}{4} \kappa}(z) )} +  \lVert G^x-G^x_h \rVert_{L^1(B_{\frac{1}{4}\kappa}(z))} \right).
\end{split}
\end{align}
Here $C$ does not depend on the used balls or on the particular $z$, but only on $\kappa$. To proof this we choose $D_1 = B_{\frac{1}{8} \kappa}(z)$ and $D_2 = B_{\frac{1}{4} \kappa}(z)$
and apply Proposition \ref{prop:Greensfct_exist_regul} to obtain
\begin{equation*}
\operatorname{Re}(G^x), \operatorname{Im}(G^x) \in C(\overline{B_{\frac{1}{4}\kappa}(z)}, \mathbb{R}) \cap H^1(B_{\frac{1}{4}\kappa}(z), \mathbb{R}).
\end{equation*}
Linearity of real part and imaginary part in the weak formulation yields, after short computation, that for any $v_h \in \tilde V_h$ with $\operatorname{supp} v_h \subseteq D_2$ there holds
\begin{align*}
\left( \nabla (\operatorname{Re}(G^x - G^x_h)), \nabla v_h \right)_{L^2(D_2)} - \left( \frac{\zeta}{c} \right)^2 \left(\operatorname{Re}(G^x - G^x_h), v_h \right)_{L^2(D_2)} = v_h(x) - v_h(x) = 0,
\end{align*}
and
\begin{align*}
\left( \nabla (\operatorname{Im}(G^x - G^x_h)), \nabla v_h \right)_{L^2(D_2)} 
- \left( \frac{\zeta}{c} \right)^2 \left(\operatorname{Im}(G^x - G^x_h), v_h \right)_{L^2(D_2)} = 0 - 0 = 0.
\end{align*}
This proves \eqref{eq:continuous_umf} according to Proposition \ref{prop:orthogonalitySchatzWahlbin}. By Proposition \ref{prop:Greensfct_exist_regul} we obtain
\begin{align} \label{eq:proof:pointwise_green_disc_error1}
\lVert G^x \rVert_{W^{2,\infty}(B_{\frac{1}{4}\kappa}(z))} \leq C_\kappa,
\end{align}
with $C_\kappa$ depending on $\kappa$ but not on $x$ or $z$. This implies
\begin{align*}
|G^x(z)- G^x_h(z)| \leq C \left( |\ln h|h^2 + \lVert G^x-G^x_h \rVert_{L^1(B_{\frac{1}{4}\kappa}(z))} \right).
\end{align*}
$C$ does not depend on $x$ or $z$. $\lVert G^x-G^x_h \rVert_{L^1(B_{\frac{1}{4}\kappa}(z))}$ is estimated as in the proof of \cite[Theorem 4.4]{[BeGaRo]}, which is based on the proof of \cite[Theorem 6.1]{SchatzWahlbinMaxNormEst} and \cite[Lemma 3.5]{[BeGaRo]}. Tracking the constants in both proofs shows the independence of $x$ and $z$.

\end{proof}

\subsubsection*{Proof of Theorem \ref{thm:mcmc}} 

For ease of notation we drop the dependence of the measures on the fixed $k$. We prove $\mu^y$-invariance by verifying the detailed balance condition
\begin{equation} \label{eq:detailed_balance}
a(u, u^\prime) \eta(du, du^\prime) = a(u^\prime, u) \eta(du^\prime, du),
\end{equation}
with $\eta(du, du^\prime) := q(u, du^\prime) \mu^y(du)$, see \cite[Section 5.2.]{Dashti2017}. 
%
We first show the reversibility of the proposal with respect to the prior
\begin{equation} \label{eq:reversibility_prior}
q(u, du^\prime) \mu^0(du) = q(u^\prime, du) \mu^0(du^\prime).
\end{equation}
We use the independence of the positions and amplitudes in the proposal and the prior to obtain
\begin{align*}
q(u, du^\prime) \mu^0(du) = q_\alpha(\alpha, d\alpha^\prime) \mu^0_\alpha(d\alpha) \cdot	q_x(x, dx^\prime) \mu^0_x(dx).
\end{align*} 
Here $q_x$ denotes the proposal associated with the positions and $q_\alpha$ the proposal associated with $\alpha$ (see \eqref{eq:proposal}). For the positions we have
\begin{align*}
q_x(x, dx^\prime) \mu^0_x(dx) &= 1_{D_\kappa}(x^\prime) \, N(x, \gamma_x I)(x^\prime) dx^\prime \, \mu^0_x(dx) + \delta_x(dx^\prime) \int_{\mathbb{R}^d} 1_{\mathbb{R}^d \setminus D_\kappa}(z) N(x, \gamma_x I)(z) dz \, \mu^0(dx).
\end{align*} 
The first term describes the probability of the proposal $x + \gamma_x \eta$ lying in $D_\kappa$ and being chosen as $x^\prime$. The second refers to the situation that $x + \gamma_x \eta$ does not lie in $D_\kappa$ and thusly $x^\prime = x$ is chosen. The second summand is clearly symmetric due to the Dirac measure. The first summand is symmetric by using $N(x,\gamma_x I)(x^\prime) = N(x^\prime,\gamma_x, I)(x)$ and $\mu_x^0(dx) = |D_\kappa|^{-1} 1_{D_\kappa}(x) dx$.

For the amplitudes observe that $q_\alpha(\alpha, d\alpha^\prime) \mu_\alpha^0(d\alpha)$ is a complex Gaussian measure on $\mathbb{C}^k \times \mathbb{C}^k = \mathbb{C}^{2k}$ with mean $(m_\alpha, m_\alpha)$, covariance matrix $\Gamma_{\alpha, \alpha}$ matrix and relation $C_{\alpha, \alpha}$ given by
\begin{align*}
\Gamma_{\alpha, \alpha} = \begin{pmatrix}
\Gamma & (1 - \gamma^2_{\alpha})^{1 / 2} \Gamma \\
(1 - \gamma^2_{\alpha})^{1 / 2} \Gamma & \Gamma
\end{pmatrix}, ~
C_{\alpha, \alpha} = \begin{pmatrix}
C & (1 - \gamma^2_{\alpha})^{1 / 2} C \\
(1 - \gamma^2_{\alpha})^{1 / 2} C & C
\end{pmatrix}.
\end{align*}
This measure is symmetric with respect to the first and last $k$ amplitudes. Hence we conclude
\begin{align*}
q_\alpha(\alpha, d\alpha^\prime) \mu^0_\alpha(d\alpha) &= q_\alpha(\alpha^\prime, d\alpha) \mu^0_\alpha(d\alpha^\prime) \\
q_x(x, dx^\prime) \mu^0_x(dx) &= q_x(x^\prime, dx) \mu^0_x(dx^\prime)
\end{align*}
and thus we have shown \eqref{eq:reversibility_prior}. A straight forward computation now shows 
\begin{align*}
\eta(du, du^\prime) &= q(u, du^\prime) \mu^y(du) = \frac{d\mu^y}{d\mu^0}(u) q(u, du^\prime) \mu^0(du) = \frac{d\mu^y}{d\mu^0}(u) q(u^\prime, du) \mu^0(du^\prime) \\
 &= \frac{d\mu^y}{d\mu^0}(u) \frac{d\mu^0}{d\mu^y}(u^\prime) q(u^\prime, du) \mu^y(du^\prime) = \exp(\Psi(u^\prime) - \Psi(u)) \eta(du^\prime, du).
\end{align*}
Thus the detailed balance \eqref{eq:detailed_balance} is satisfied with $a$ defined as in Theorem \ref{thm:mcmc}. 

\subsubsection*{Derivation of \eqref{eq:hellinger_numerics}} \label{sec:appendix_hellinger}
The square of the Hellinger Distance is defined as
\begin{equation*}
d^2_{\text{Hell}}(\mu^y, \mu^y_h) = \int_X \frac{1}{2}\left(\left(\frac{d\mu^y}{d\nu}\right)^{\frac{1}{2}} - \left(\frac{d\mu^y_h}{d\nu}\right)^{\frac{1}{2}}\right)^2 d\nu
\end{equation*}
and both $\mu^y_h, \mu^y$ are absolutely continuous w.r.t. the reference measure $\nu$. The measures $\mu^y_h$ and $\mu^y$ are equivalent since
\begin{equation*}
d \mu^y(u) \propto \exp(-\Phi(u)) d \mu^0 \propto \exp(\Phi_h(u) -\Phi(u)) \exp(- \Phi_h(u)) d \mu^0 \propto \exp(\Phi_h(u) -\Phi(u)) d\mu^y_h.
\end{equation*}
Hence we conclude
\begin{align*}
2 d^2_{\text{Hell}}(\mu^y, \mu^y_h) = \int_X \left(1 - \left(\frac{d\mu^y_h}{d\mu^y}\right)^{\frac{1}{2}}\right)^2 d\mu^y, \qquad
2 d^2_{\text{Hell}}(\mu^y, \mu^y_h) = \int_X \left(\left(\frac{d\mu^y}{d\mu^y_h}\right)^{\frac{1}{2}} - 1 \right)^2 d\mu^y_h.
\end{align*}
We add these equations to obtain \eqref{eq:hellinger_numerics}.
\end{appendix}

\FloatBarrier


\end{document}